\documentclass{amsart}
\usepackage[a4paper]{geometry}
\usepackage{tikz}
\usetikzlibrary{arrows, calc, positioning, decorations.markings}
\tikzset{
    between/.style args={#1 and #2}{
         at = ($(#1)!0.5!(#2)$)
    }
}

\frenchspacing
\usepackage{ucs}
\usepackage[utf8x]{inputenc}
\usepackage{amsthm}
\usepackage{url}
\usepackage{comment}
\usepackage{enumitem}
\makeatletter

\def\part{\@startsection{part}{0}%
  \z@{\linespacing\@plus\linespacing}{.5\linespacing}%
  {\normalfont\bfseries\centering}}
\makeatother
\newtheorem{thm}{Theorem}[section]
\newtheorem{lem}[thm]{Lemma}
\newtheorem{lemma}[thm]{Lemma}
\newtheorem{prop}[thm]{Proposition}
\newtheorem{cor}[thm]{Corollary}
\newtheorem{conj}{Conjecture}
\newtheorem*{asp}{Aspiration}
\theoremstyle{remark}
\newtheorem{remark}{Remark}

\theoremstyle{definition}

\setlength\parindent{0pt}

\newcommand{\cy}{\mathcal{Y}}

\newcommand{\cm}{\mathcal{M}}
\newcommand{\bt}{\mathbb{T}}

\newcommand{\be}{\mathbb{E}}
\newcommand{\norm}[1]{\left\| #1\right\|}
\newcommand{\abs}[1]{\left| #1\right|}
\newcommand{\dmu}{\mathrm{d}\mu}



\usepackage{hyperref}
\usepackage{xcolor}
\hypersetup{
  colorlinks   = true, 
  urlcolor     = blue, 
  linkcolor    = blue, 
  citecolor   = red 
}

\usepackage{setspace}
\usepackage{amsmath}
\usepackage{amsfonts}
\usepackage{amssymb}

\DeclareMathOperator{\conv}{conv}

\newtheorem{definition}[thm]{{\bf {\em Definition}}}


\keywords{Multilinear inequalities, duality, factorisation, disentanglement, $p$-convexity, Rademacher-type}

\title[Disentanglement, Multilinear Duality and Factorisation]{Disentanglement,  Multilinear Duality and Factorisation
  for non-positive operators}
\author{Anthony Carbery, Timo S. H\"anninen and Stef\'an Ingi Valdimarsson}
\address{Anthony Carbery, 
School of Mathematics and Maxwell Institute for Mathematical Sciences, 
University of Edinburgh,
James Clerk Maxwell Building, 
Peter Guthrie Tait Road,
King's Buildings, 
Mayfield Road, 
Edinburgh, EH9 3FD, 
Scotland.}
\email{A.Carbery@ed.ac.uk}
\address{Timo S. H\"anninen,
Department of Mathematics and Statistics, University of Helsinki, P.O. Box 68, FI-00014 Helsinki, Finland; and School of Mathematics and Maxwell Institute for Mathematical Sciences, 
University of Edinburgh,
James Clerk Maxwell Building, 
Peter Guthrie Tait Road,
King's Buildings, 
Mayfield Road, 
Edinburgh, EH9 3FD, 
Scotland.}
\email{timo.s.hanninen@helsinki.fi}
\address{Stef\'an Ingi Valdimarsson,
Arion banki, Borgart\'un 19, 105 Reykjav\'ik,
Iceland; and Science
Institute, University of Iceland, Dunhagi 5, 107
Reykjav\'ik, Iceland.}

\email{sivaldimarsson@gmail.com}
\begin{document}
\setcounter{tocdepth}{1}
\begin{abstract}
In our previous work \cite{CHV1} we established a multilinear duality and factorisation theory for norm inequalities for pointwise weighted geometric means of positive linear operators defined on normed lattices. In this paper we extend the reach of the theory for the first time to the setting of {\em general} linear operators defined on normed spaces. The scope of this theory includes multilinear Fourier restriction-type inequalities. We also sharpen our previous theory of positive operators.

\medskip
\noindent
Our results all share a common theme: estimates on a weighted geometric mean of linear operators can be {\em disentangled}
into quantitatively linked estimates on each operator separately. The concept of disentanglement recurs throughout  the paper.

\medskip
\noindent
The methods we used in the previous work -- principally convex optimisation -- relied strongly on positivity. In contrast, in this paper we use a vector-valued reformulation of disentanglement, geometric properties (Rademacher-type) of the underlying normed spaces, and probabilistic considerations related to $p$-stable random variables.

\end{abstract}
\maketitle
\setcounter{tocdepth}{1}
\tableofcontents
\section{Introduction}\label{intro}

In our previous work \cite{CHV1} we introduced and developed a general functional-analytic principle concerning norm inequalities for pointwise
weighted geometric means
$$ \prod_{j=1}^d |T_j f_j(x)|^{\alpha_j}$$
of {\em positive} linear operators $T_j$ defined on suitable spaces, where $\alpha_j \geq 0$
and $\sum_{j=1}^d \alpha_j = 1$. In this paper we extend our study to the situation in which the linear operators $T_j$ are no longer assumed to be positive. The techniques of \cite{CHV1} relied strongly on positivity, so it will be necessary to involve a new set of ideas.

\medskip
\noindent
In order to set the scene for this, it will be helpful to recall the main theorem of \cite{CHV1}, but we first we need to set up some notation.
Let $(X, {\rm d} \mu)$ be a measure space and let $\mathcal{M}(X)$ be the class of measurable functions on $X$. Let $\mathcal{Y}$ be a real or complex normed space.
(For example, if $Y$ is a measure space, $\mathcal{Y}$ could be the class $\mathcal{S}(Y)$ of simple
functions with an $L^p$-norm for some $p \geq 1$.) We say that a  linear map
$T: \mathcal{Y} \to \mathcal{M}(X)$ {\bf saturates $X$} if, for each subset
  $E \subseteq X$ of positive measure, there exists a subset 
  $E^\prime  \subseteq E$ with $\mu(E^\prime) > 0$ and an $h \in \mathcal{Y}$
  such that $|Th| > 0$ a.e. on $E^\prime$. For reasons explained in \cite{CHV1}, such a condition is needed for the result which follows to hold.

\begin{thm}(\cite{CHV1})
\label{thmmainbaby}
Suppose that $X$ is a $\sigma$-finite measure space and that $\mathcal{Y}_j$, for $j=1,\dots,d$, are normed lattices. Suppose that the linear
operators $T_j: \mathcal{Y}_j \to \mathcal{M}(X)$ are positive
and that each $T_j$ saturates $X$. Suppose that $ 0 < q \leq \infty$ and 
$\sum_{j=1}^d \alpha_j = 1$. Finally, suppose that
\begin{equation}\label{abc}
\left\|\prod_{j=1}^d (T_jf_j)^{\alpha_j}\right\|_{L^q(X)}
\leq A \prod_{j=1}^d \Big\|f_j\Big\|_{\mathcal{Y}_j}^{\alpha_j}
\end{equation}
for all nonnegative $f_j \in \mathcal{Y}_j$, $1 \leq j \leq d$.

\medskip
\noindent
\begin{enumerate}
\item[{\bf Case I.}] {\bf (Disentanglement).} If $q = 1$, then there exist nonnegative measurable functions 
$g_j$ on $X$ such that 
\begin{equation}\label{factorisebaby_0}
1 \leq \prod_{j=1}^d g_j(x)^{\alpha_j}\qquad\mbox{a.e. on $X$,}
\end{equation}
and such that for each $j$, 
\begin{equation}\label{controlbaby_0}
\int_X g_j(x)T_jf_j(x) {\rm d}\mu(x) \leq A\|f_j\|_{\mathcal{Y}_j}
\end{equation}
for all $f_j \in  \mathcal{Y}_j$, with the same constant $A$ as in \eqref{abc}.

\medskip
\noindent
Conversely, if the $T_j$ are positive linear operators such that there exist nonnegative measurable functions 
$g_j$ on $X$ such that \eqref{factorisebaby_0} holds, and such that \eqref{controlbaby_0} holds
for all $f_j \in \mathcal{Y}_j$, then \eqref{abc} holds for all nonnegative $f_j \in \mathcal{Y}_j$.

\bigskip
\item[{\bf Case II.}] {\bf (Multilinear Duality).} If $q > 1$, then for every nonnegative
 $G \in L^{q^\prime}(X)$ there exist nonnegative measurable functions 
$g_j$ on $X$ such that 
\begin{equation}\label{factorisebaby}
G(x) \leq \prod_{j=1}^d g_j(x)^{\alpha_j}\qquad\mbox{a.e. on $X$,}
\end{equation}
and such that for each $j$, 
\begin{equation}\label{controlbaby}
\int_X g_j(x)T_jf_j(x) {\rm d}\mu(x) \leq A\|G\|_{L^{q^{\prime}}}\|f_j\|_{\mathcal{Y}_j}
\end{equation}
for all $f_j \in  \mathcal{Y}_j$, with the same constant $A$ as in \eqref{abc}.

\medskip
\noindent
Conversely, if the $T_j$ are positive linear operators such that for every nonnegative
 $G \in L^{q^\prime}(X)$ there exist nonnegative measurable functions 
$g_j$ on $X$ such that \eqref{factorisebaby} holds, and such that \eqref{controlbaby} holds
for all $f_j \in \mathcal{Y}_j$, then \eqref{abc} holds for all nonnegative $f_j \in \mathcal{Y}_j$. 

\bigskip
\item[{\bf Case III.}] {\bf (Multilinear Maurey Factorisation).} If $0 < q < 1$, then there exist nonnegative measurable
  functions $g_j$ on $X$ such that\footnote{We  caution that we use the notation $\|g \|_q:=(\int|g|^q)^{1/q}$ and $q' :=q/(q-1)$ for $q<0$ and for $0<q<1$, even though in these cases $\| \cdot \|_q$  does not define a norm.} 
  \begin{equation}\label{factorisebaby_b}
    \| \prod_{j=1}^d g_j(x)^{\alpha_j} \|_{q'} = 1
    \end{equation}
  and such that for each $j$, \eqref{controlbaby_0} holds for all $f_j \in \mathcal{Y}_j$,  with the same constant $A$ as in \eqref{abc}.

  \medskip
  \noindent
  Conversely, if the $T_j$ are positive linear operators such that there exist nonnegative measurable functions 
$g_j$ on $X$ such that \eqref{factorisebaby_b} holds, and such that \eqref{controlbaby_0} holds
for all $f_j \in \mathcal{Y}_j$, then \eqref{abc} holds for all nonnegative $f_j \in \mathcal{Y}_j$. 

\end{enumerate}
\end{thm}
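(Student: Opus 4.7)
In each of the three cases, the converse direction is straightforward: the pointwise arithmetic--geometric mean inequality applied to $\prod g_j^{\alpha_j} \ge G$ (interpreted appropriately in Case III), combined with the individual bounds \eqref{controlbaby_0}/\eqref{controlbaby} and H\"older, recovers \eqref{abc}. I therefore concentrate on the forward direction, whose core case is Case II.

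For Case II, fix nonnegative $G$ with $\|G\|_{q'}=1$ and consider the bilinear functional
\[
F\bigl((g_j),(f_j)\bigr) \;=\; \sum_{j=1}^d \int_X g_j\,T_j f_j \,\mathrm{d}\mu
\]
on the convex sets $\mathcal{K}=\{(g_j):\,g_j\ge 0 \text{ measurable},\ \prod g_j^{\alpha_j}\ge G \text{ a.e.}\}$ and $C=\{(f_j):\,f_j\in\mathcal{Y}_j,\ f_j\ge 0,\ \sum_j\|f_j\|_{\mathcal{Y}_j}\le 1\}$. By absorbing $\max_j$ into the constraint $\sum_j\|f_j\|\le 1$, the forward statement becomes $\inf_{\mathcal{K}}\sup_C F \le A$. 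A Sion-type minimax theorem---applied after truncating $X$ to a set of finite measure and truncating $f_j$, $g_j$ to bounded functions, so that Banach--Alaoglu supplies weak-$*$ compactness---permits the interchange $\inf_{\mathcal{K}}\sup_C F = \sup_C\inf_{\mathcal{K}}F$. For fixed $(f_j)\in C$ the inner problem splits pointwise in $x$: minimise $\sum_j g_j(x)T_jf_j(x)$ subject to $\prod_j g_j(x)^{\alpha_j}\ge G(x)$. A Lagrange/AM--GM computation gives the optimal $g_j(x)\propto G(x)\prod_k T_kf_k(x)^{\alpha_k}/T_jf_j(x)$ and minimum value $\prod\alpha_j^{-\alpha_j}\int G\prod(T_jf_j)^{\alpha_j}\,\mathrm d\mu$. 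H\"older, the hypothesis \eqref{abc}, and the elementary bound $\sup_C\prod\|f_j\|^{\alpha_j}=\prod\alpha_j^{\alpha_j}$ combine to estimate this by $A$. Reversing the truncation and passing to an $\varepsilon\downarrow 0$ limit yields $(g_j)\in\mathcal{K}$ with $\sum_j\int g_jT_jf_j\le A\sum_j\|f_j\|$ for all admissible $(f_j)$; testing with $f_k\equiv 0$ for $k\ne j$ then extracts \eqref{controlbaby}.

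Case I ($q=1$) is obtained from Case II by exhausting $X$ by sets $X_n$ of finite measure, applying Case II with $G=\chi_{X_n}$ to obtain $(g_j^{(n)})$, and extracting a diagonal pointwise (or Koml\'os-type) limit; the saturation hypothesis ensures the limit functions do not collapse on sets of positive measure, producing $(g_j)$ independent of any particular $G$. Case III ($0<q<1$) is handled by a Maurey-style reduction: raising \eqref{abc} to the $q$-th power converts it to an $L^1$-type inequality with exponents $q\alpha_j$ summing to $q<1$; inserting an auxiliary factor restores the exponent-sum-to-one normalisation, and Case I applied to the augmented system yields $(g_j)$ for which $\|\prod g_j^{\alpha_j}\|_{q'}=1$ in the reverse-H\"older sense appropriate to $q'<0$.

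The principal technical obstacle throughout is the minimax step: neither $\mathcal{K}$ nor $C$ is compact in any natural topology on non-reflexive lattices $\mathcal{Y}_j$, and the infimum defining the factorisation constant need not be attained. I would navigate this by a careful truncation-and-limit scheme exploiting $\sigma$-finiteness, saturation, and monotone/dominated convergence---technicalities which, together with the measurable selection of the $g_j$'s (especially the uniform-in-$G$ selection needed in Case I), should constitute the bulk of a detailed proof.
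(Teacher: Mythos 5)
The paper does not reprove Theorem~\ref{thmmainbaby}; it is quoted verbatim from \cite{CHV1}, where it was established by a convex-optimisation (minimax) argument, and the present paper merely uses it as a black box (cf.\ Figure~\ref{fig:M12}, which is circular through Theorem~\ref{thm:lq_positive}). Your proposal is therefore aligned with the method of the cited source rather than with anything the present paper does in detail, and the Lagrangian/AM--GM computation in your inner minimisation is correct: the optimal $g_j(x)=\lambda\alpha_j G(x)/T_jf_j(x)$ with $\lambda=\prod_k\alpha_k^{-\alpha_k}\prod_k(T_kf_k(x))^{\alpha_k}$, the value $\prod_k\alpha_k^{-\alpha_k}\int G\prod_k(T_kf_k)^{\alpha_k}$, and the evaluation $\sup_C\prod_j\|f_j\|^{\alpha_j}=\prod_j\alpha_j^{\alpha_j}$ all check out and combine to give $A$.

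Two points where you diverge from the paper's own organisation are worth noting. First, you make Case~II primordial and derive Case~I from it by exhausting $X$ and passing to a diagonal/Koml\'os limit; the paper (Section~\ref{revisit}, proof of Theorem~\ref{thm:lq_positive}) goes the other way, obtaining Case~II from Case~I by the one-line device of replacing $\mathrm{d}\mu$ with $G\,\mathrm{d}\mu$ and noting that $\sigma$-finiteness and saturation are preserved. That direction is considerably cleaner than your limit argument, and avoids the infelicity that ``apply Case~II with $G=\chi_{X_n}$'' is not literally available when the hypothesis \eqref{abc} is only assumed for $q=1$ (you would instead rerun the minimax argument directly with $q'=\infty$, which does work, but it is a different statement than invoking Case~II). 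Your Case~III reduction (augment by a $(d{+}1)$-th constant operator to restore $\sum\alpha_j=1$) matches the paper's argument precisely.

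The genuine gap is the one you name yourself: establishing the minimax identity on non-compact sets and, more seriously, upgrading $\inf_{\mathcal{K}}\sup_C F\le A$ to the existence of an actual minimiser $(g_j)\in\mathcal{K}$ attaining the bound, together with a measurable (and, in Case~I, uniform-in-$G$) selection. These are not incidental technicalities: the constraint $\prod g_j^{\alpha_j}\ge G$ is not closed under weak limits, $\mathcal{K}$ is unbounded, and without a careful use of saturation and the lattice structure the extremising sequence can degenerate. As a blind sketch the strategy is sound and consistent with what \cite{CHV1} is reported to do, but this attainment step is exactly where the bulk of the work lies and it is not carried out here.
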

Numerous illustrations and applications of this theorem were given in \cite{CHV1}. It should be stressed that this result is a general one, applying to the class of positive operators broadly.

\medskip
\noindent
The forward parts of this result are the difficult ones; the converses
follow easily by applying H\"older's inequality. When $d=1$, Case II reduces to
an elementary duality statement concerning the operator $T: \mathcal{Y}\to L^q$ and this gives rise to the sobriquet
``multilinear duality'' in the case of general $d$. The term ``factorisation'' relates both to the pointwise factorisation expressed by
\eqref{factorisebaby} and to the condition \eqref{controlbaby} which is a statement that each operator $T_j$
factorises through a certain weighted $L^1$-space.

\medskip
\noindent
Case I, corresponding to $q=1$, plays a special role, and indeed the remaining cases corresponding to $q \neq 1$
can be deduced from it without too much difficulty -- see Section~\ref{revisit} for arguments of this type. 
We describe the case $q=1$ as a ``disentanglement'' result since it disentangles a bound \eqref{abc} on the pointwise combination of the $T_j$'s into bounds \eqref{controlbaby_0} on each $T_j$ separately, with the individual bounds linked via \eqref{factorisebaby_0}.

\medskip
\noindent
Notice that, when suitably modified, the statement of Theorem~\ref{thmmainbaby} makes
perfectly good sense in principle {\em without} the hypothesis of positivity of the
operators $T_j$; nevertheless, as we have mentioned, the arguments from \cite{CHV1} rely very
heavily on positivity. In this paper we use vector-valued techniques to
develop an analogue of Theorem~\ref{thmmainbaby} which applies to general linear operators defined on normed spaces. See Theorems~\ref{answer:general_operators}, \ref{lin_q}, \ref{qwm_ba} and \ref{thm:lq_linear} below.

\medskip
\noindent
In what follows we shall primarily focus on the case of $L^1$ norms of pointwise weighted products  $\prod_{j=1}^d|T_jf_j|^{\gamma_j}$ in our pursuit of extending Theorem~\ref{thmmainbaby} to general linear operators $T_j$. We return to the case of general $L^q$-norms of such expressions in Section~\ref{revisit}, and there we see that it is relatively straightforward to derive the results for general $q$, which even in the positive case significantly generalise Theorem~\ref{thmmainbaby}, from those corresponding to $q=1$.

\medskip
\noindent
We next give a simple lemma. All of our main results can be framed as reversals of the implication it establishes (under various auxiliary hypotheses).

\medskip
\noindent
\begin{lem}\label{prelim}
  Let $\mathcal{Y}_j$ be normed spaces and let $T_j : \mathcal{Y}_j \to \mathcal{M}(X)$ be linear mappings for $1 \leq j \leq d$. Suppose $\gamma_j > 0$ are given. 
  Assume that for some $(p_j)$ with  $0 < p_j < \infty $ we have the condition 
  \begin{equation}\label{homog}
    \sum_{j=1}^d \frac{\gamma_j}{p_j} = 1,
\end{equation}
  and that there exist nonnegative measurable functions $(\phi_j)$ on $X$ such that
\begin{equation}\label{yan1}
\prod_{j=1}^d \phi_j(x)^{\gamma_j/p_j} \geq 1
  \end{equation}
a.e. on $X$ and such that
\begin{equation}\label{yan2}
  \left(\int_X |T_j f_j(x)|^{p_j} \phi_j(x) {\rm d} \mu(x) \right)^{1/p_j} \leq A \|f_j\|_{\mathcal{Y}_j}
 \end{equation}
for all $f_j \in \mathcal{Y}_j$. Then
\begin{equation}\label{yan}
\int_X \prod_{j=1}^d |T_j f_j(x)|^{\gamma_j} {\rm d} \mu(x) \leq A^{\sum_{j=1}^d \gamma_j}\prod_{j=1}^d \|f_j \|_{\mathcal{Y}_{j}}^{\gamma_j}
  \end{equation}
for all $f_j \in \mathcal{Y}_j$.
  \end{lem}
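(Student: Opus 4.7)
The plan is to apply Hölder's inequality after introducing the weights $\phi_j$ into the integrand on the left-hand side of \eqref{yan}. Since the hypothesis \eqref{yan1} says $\prod_{j=1}^d \phi_j^{\gamma_j/p_j} \geq 1$ almost everywhere, I would begin by writing
\[
\int_X \prod_{j=1}^d |T_j f_j(x)|^{\gamma_j} \, {\rm d}\mu(x) \leq \int_X \prod_{j=1}^d |T_j f_j(x)|^{\gamma_j} \phi_j(x)^{\gamma_j/p_j} \, {\rm d}\mu(x) = \int_X \prod_{j=1}^d \bigl(|T_j f_j(x)|^{p_j} \phi_j(x)\bigr)^{\gamma_j/p_j} \, {\rm d}\mu(x).
\]

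Next I would apply Hölder's inequality to the last integral with exponents $p_j/\gamma_j$, whose reciprocals sum to $1$ by precisely the normalisation hypothesis \eqref{homog}. This gives
\[
\int_X \prod_{j=1}^d \bigl(|T_j f_j|^{p_j} \phi_j\bigr)^{\gamma_j/p_j} \, {\rm d}\mu \leq \prod_{j=1}^d \left( \int_X |T_j f_j(x)|^{p_j} \phi_j(x) \, {\rm d}\mu(x) \right)^{\gamma_j/p_j}.
\]

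Finally I would invoke the individual bounds \eqref{yan2}, raised to the power $\gamma_j$, to conclude that each factor on the right is at most $A^{\gamma_j} \|f_j\|_{\mathcal{Y}_j}^{\gamma_j}$, yielding \eqref{yan} after multiplying the $d$ factors together. There is no real obstacle here: the only thing to check is the compatibility of the Hölder exponents with the $p_j/\gamma_j$ scaling, and this is baked into \eqref{homog}. The lemma is essentially a bookkeeping computation recording how the hypothesised disentangled estimates recombine via Hölder into the entangled estimate \eqref{yan}; the substance of the paper will lie in reversing this implication.
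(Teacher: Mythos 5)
Your proof is correct and is essentially identical to the paper's: both insert the factor $\prod_j \phi_j^{\gamma_j/p_j} \geq 1$, apply Hölder's inequality with conjugate exponents $p_j/\gamma_j$ (summing to one by \eqref{homog}), and then invoke \eqref{yan2}. The paper merely introduces the shorthand $\theta_j = \gamma_j/p_j$ before carrying out the same computation.
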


\begin{proof}
  Let $\theta_j = \gamma_j/p_j$. Then $\sum_{j=1}^d \theta_j = 1$, and, by \eqref{yan1}, \eqref{yan2} and H\"older's inequality, we have
  $$\int_X \prod_{j=1}^d |T_j f_j(x)|^{\gamma_j} {\rm d} \mu(x)
\leq \int_X \prod_{j=1}^d |T_j f_j(x)|^{\gamma_j}  \phi_j(x)^{\gamma_j/p_j}{\rm d} \mu(x)$$
$$= \int_X \prod_{j=1}^d |T_j f_j(x)|^{p_j \theta_j} \phi_j(x)^{\theta_j} {\rm d} \mu(x)
\leq  \prod_{j=1}^d   \left(\int_X |T_j f_j(x)|^{p_j} \phi_j(x){\rm d} \mu(x)\right)^{\theta_j}$$
$$ \leq A^{\sum_{j=1}^d p_j \theta_j} \prod_{j=1}^d \|f_j\|_{\mathcal{Y}_j}^{p_j \theta_j} = A^{\sum_{j=1}^d \gamma_j} \prod_{j=1}^d \|f_j\|_{\mathcal{Y}_j}^{\gamma_j }.$$
\end{proof}
Taking
$\gamma_j = q \alpha_j$ with $q$ and $\sum_{j=1}^d\alpha_j = 1$ as in the preceding discussion makes a point of contact with Theorem~\ref{thmmainbaby}.

\medskip
\noindent
Note that Lemma~\ref{prelim} has no content in the linear case $d=1$. Our main concern will therefore be with the converse scenario in the genuinely multilinear case $d\geq 2$. The lemma delineates what we might hope for. More precisely:

\medskip
\noindent
{\bf {Basic Question.}}
Let $d \geq 2$. Suppose $X$ is a $\sigma$-finite measure space, $\mathcal{Y}_j$ are normed spaces, $T_j : \mathcal{Y}_j \to \mathcal{M}(X)$ are saturating linear mappings, and $\gamma_j > 0$ for $1 \leq j \leq d$. We suppose that \eqref{yan} holds. For which $(p_j)$ (if any) with $0 < p_j < \infty $ satisfying condition \eqref{homog} 
can we conclude that there exist nonnegative $(\phi_j)$ such that conditions \eqref{yan1} and \eqref{yan2} hold, perhaps with a loss in the constants? 

\medskip
\noindent
Once again we emphasise that we ask this question in the broad context: we seek answers which do not rely upon the precise nature of the operators $T_j:\cy_j\to \cm(X)$, but instead which will hold universally over a wide class of linear operators. We expect that the set of admissible exponents $(p_j)$, in addition to satisfying \eqref{homog}\footnote{For a discussion of why we require this condition, see Proposition~\ref{homog_nec} in the Appendix.}, will reflect whatever geometric structures the normed spaces $\cy_j$ may possess. 

\medskip
\noindent
We shall give separate answers to this question in the settings of general linear operators and of positive linear operators. It transpires that in order to develop the theory for general linear operators,
it first makes sense to consider a related question for positive linear 
operators: if in Theorem~\ref{thmmainbaby} we take the lattices $\mathcal{Y}_j$ to be $L^{r_j}$-spaces, are there stronger, $r_j$-dependent, conclusions that we can make?

\medskip
\noindent
The following result answers our Basic Question for positive linear operators on Lebesgue spaces,   
with no loss in constants. A corresponding answer in the case of general linear operators on Lebesgue spaces is given in Theorem~\ref{answer:general_operators}.

\begin{thm}
\label{answer:qwm_0} 
Suppose that $X$ and $Y_j$, for $j=1,\dots,d$, are measure spaces and that $X$ is $\sigma$-finite.
Suppose that the linear
operators $T_j: \mathcal{S}(Y_j) \to \mathcal{M}(X)$ are positive
and that each $T_j$ saturates $X$. Suppose that $ 1 \leq r_j \leq \infty$ for all $j$. 
Finally, suppose that for some exponents $\gamma_j > 0$ we have
\begin{equation}\label{abc_17}
\int_X \prod_{j=1}^d (T_jf_j)(x)^{\gamma_j} {\rm d} \mu(x)
\leq A^{\sum_{j=1}^d \gamma_j} \prod_{j=1}^d \Big\|f_j\Big\|_{L^{r_j}(Y_j)}^{\gamma_j}
\end{equation}
for all nonnegative simple functions $f_j$ on ${Y}_j$, $1 \leq j \leq d$.

\medskip
\noindent
Then for all $(p_j)$ satisfying $ 0 <p_j < \infty$ for all $j$, $\sum_{j=1}^d \gamma_j /p_j = 1$ and $p_j \leq r_j$ for all $j$,
there exist nonnegative $(\phi_j)$ such that
\begin{equation}\label{yan12}
\prod_{j=1}^d \phi_j(x)^{\gamma_j/p_j} \geq 1
  \end{equation}
a.e. on $X$ and such that
\begin{equation}\label{yan22}
  \left(\int_X |T_j f_j(x)|^{p_j} \phi_j(x) {\rm d} \mu(x) \right)^{1/p_j} \leq A \|f_j\|_{r_j}
 \end{equation}
for all $f_j \in \mathcal{S}(Y_j)$.
\end{thm}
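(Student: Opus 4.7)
The approach is to reduce the claim to an application of Case~I of Theorem~\ref{thmmainbaby} for auxiliary positive linear operators acting on $L^{r_j/p_j}(Y_j)$, which is a normed lattice because $r_j/p_j \geq 1$ by the hypothesis $p_j \leq r_j$. Set $\alpha_j := \gamma_j/p_j$, so that $\sum_j \alpha_j = 1$. For each finite-measure subset $E_j \subset Y_j$, I would introduce the positive linear operator
\[
\hat T_j^{(E_j)} : \mathcal{S}(E_j) \to \mathcal{M}(X), \qquad \hat T_j^{(E_j)} F := (T_j \chi_{E_j})^{p_j-1}\, T_j F,
\]
which inherits saturation of $X$ from $T_j$ once $E_j$ is sufficiently large. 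Jensen's inequality for positive operators---applied at each $x \in X$ to the probability measure on $E_j$ naturally associated to $T_j$---yields, for nonnegative $f$ supported on $E_j$ and $p_j \geq 1$, the pointwise bound $(T_j f)^{p_j} \leq (T_j \chi_{E_j})^{p_j-1}\, T_j(f^{p_j}) = \hat T_j^{(E_j)}(f^{p_j})$.

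The main work is to verify that \eqref{abc_17} yields the Case~I hypothesis
\[
\int_X \prod_j (\hat T_j^{(E_j)} F_j)^{\alpha_j} \, d\mu \ \leq \ A^{\sigma} \prod_j \|F_j\|_{L^{r_j/p_j}(E_j)}^{\alpha_j}, \qquad \sigma := \sum_j \gamma_j,
\]
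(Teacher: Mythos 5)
Your proposal is incomplete in exactly the place where the real difficulty lies, and the tool you introduce does not help there. Jensen's inequality for positive operators, as you correctly state it, gives $(T_jf)^{p_j} \leq \hat T_j^{(E_j)}(f^{p_j})$, i.e.\ a \emph{lower} bound on $\hat T_j^{(E_j)}F_j$ in terms of $(T_j(F_j^{1/p_j}))^{p_j}$. That is precisely what one needs for the easy ``wrap-up'' step: once the Case~I conclusion is available for the $\hat T_j^{(E_j)}$, one substitutes $F_j=|f_j|^{p_j}$ and uses $|T_jf_j|^{p_j}\leq \hat T_j^{(E_j)}(|f_j|^{p_j})$ to pass back to $T_j$. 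But for the ``main work'' you flag --- establishing the Case~I hypothesis
$\int_X \prod_j (\hat T_j^{(E_j)} F_j)^{\alpha_j}\,d\mu \leq A^{\sigma}\prod_j\|F_j\|_{L^{r_j/p_j}}^{\alpha_j}$ --- one needs an \emph{upper} bound on $\prod_j(\hat T_j^{(E_j)}F_j)^{\alpha_j}$ in terms of quantities that \eqref{abc_17} controls. Jensen runs the other way: it shows $\prod_j(\hat T_j^{(E_j)}F_j)^{\alpha_j}\geq \prod_j(T_j F_j^{1/p_j})^{\gamma_j}$, and a reverse Jensen (or reverse Hölder for a positive operator) is simply false in general. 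You have not indicated any other mechanism for bounding the auxiliary expression, and I do not see a straightforward one: writing $\hat T_j^{(E_j)}F_j=(T_j\chi_{E_j})^{p_j-1}T_jF_j$ and trying to absorb the two factors into the hypothesis by Hölder forces incompatible exponent constraints unless all the $p_j$ coincide. So this is a genuine gap, not a routine verification.

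The paper takes a different and cleaner route which avoids modifying the operators at all. Theorem~\ref{answer:qwm_0} is deduced from Theorem~\ref{qwm_bb} (the disentanglement theorem for positive operators on $p_j$-convex lattices, using $C_{p_j}(L^{r_j})=1$ for $p_j\leq r_j$), which is in turn obtained by the vector-valued reformulation of Case~I (Theorems~\ref{functions} and~\ref{vector}) combined with Lemma~\ref{scalar_to_vector}. The key linearisation there is the $\ell^{p_j}$-duality trick applied \emph{inside} $T_j$:
\[
\Big(\textstyle\sum_k|T_jf_{jk}(x)|^{p_j}\Big)^{1/p_j}=\sup_{\|\lambda\|_{p_j'}\le1}\big|T_j\big(\textstyle\sum_k\lambda_kf_{jk}\big)(x)\big|\leq T_j\Big[\big(\textstyle\sum_k|f_{jk}|^{p_j}\big)^{1/p_j}\Big](x),
\]
which goes the \emph{right} way and upgrades the scalar inequality \eqref{abc_17} to the $(p_j)$-vector-valued inequality with the same operators $T_j$; $p_j$-convexity of $L^{r_j}$ then controls the right-hand side. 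If you wish to persist with the auxiliary-operator reduction, you would have to supply a genuinely new argument for the auxiliary Case~I hypothesis; as written, the proposal does not establish the theorem.
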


\medskip
\noindent
\begin{remark} In the Appendix below we give an example of positive linear operators $(T_j)$ satisfying \eqref{abc_17}, for which the set of $(p_j)$ satisfying $0 < p_j < \infty$ and $\sum_{j=1}^d \gamma_j /p_j = 1$, and for which the conclusion of Theorem~\ref{answer:qwm_0} holds, consists {\em precisely} of those satisfying $ p_j \leq r_j$ for every $j$. See Corollary~\ref{seis_tres}.  
Thus the condition $p_j \leq r_j$ is sharp if we want our result to hold broadly for positive operators without further reference to their individual properties.\footnote{For {\em particular} positive operators $(T_j)$, the result may hold even when $p_j > r_j$ for some $j$. Indeed, let $X = Y_j = [0,1]$ with Lebesgue measure, let $r_j = 1$ for all $j$ and let each $T_j$ be given by $T_jf = \int_0^1 f$, so that each $T_jf$ is constant on $[0,1]$. Then \eqref{abc_17} holds for all exponents $\gamma_j > 0$, with $A=1$. If we take $\phi_j(x) = 1$ for all $j$, then both \eqref{yan12} and \eqref{yan22} hold for all exponents $0<p_j<\infty$.} \end{remark}

\medskip
\noindent
Notice that the set
$$\left\{(p_j) \in (0,\infty)^d \, : \sum_{j=1}^d \frac{\gamma_j}{p_j} = 1 \mbox{ and } p_j \leq r_j \mbox{ for all }j  \right\}$$
is nonempty if and only if $\sum_{j=1}^d \gamma_j /r_j \leq 1$. In particular, Theorem~\ref{answer:qwm_0} has no content unless $\sum_{j=1}^d\gamma_j/r_j \leq 1$. In Corollary~\ref{seis_tres} we demonstrate, by example, that if  $\sum_{j=1}^d\gamma_j/r_j >1$, then the set of $(p_j)$ satisfying the conclusion of Theorem~\ref{answer:qwm_0} may indeed be empty.

\medskip
\noindent
Under hypothesis \eqref{abc_17}, the disentangled conclusions \eqref{yan22} for $p_j \leq \max \{r_j, \gamma_j\}$ alone, with otherwise unspecified but nontrivial $(\phi_j)$, are more straightforward, and can be established by methods which are not genuinely multilinear.\footnote{The range $p_j \leq \max \{r_j, \gamma_j\}$ for this simpler problem is also known to be sharp, as the arguments in the Appendix confirm.} 
The significant feature of Theorem~\ref{answer:qwm_0} is that under the hypotheses $\sum_{j=1}^d \gamma_j /p_j = 1$ and $p_j \leq r_j$ for all $j$, we can choose $(\phi_j)$ also satisfying the specific quantitative lower bound \eqref{yan12}. Similar remarks apply to our subsequent results.

\medskip
\noindent
We point out that the case $p_j = 1$ for all $j$ of Theorem ~\ref{answer:qwm_0} directly implies 
Case I (and therefore Case II) of Theorem~\ref{thmmainbaby} (in the special case where the spaces $\mathcal{Y}_j$ are taken to be $L^{r_j}$).
The case $p_j = r_j$ of Theorem~\ref{answer:qwm_0} is, however, the crucial one, and in a slightly different notation can be presented as follows:

\begin{thm}[Disentanglement for positive operators on Lebesgue spaces]\label{qwm_1} 
Suppose that $X$ and $Y_j$, for $j=1,\dots,d$, are measure spaces and that $X$ is $\sigma$-finite.
Suppose that the linear
operators $T_j: \mathcal{S}(Y_j) \to \mathcal{M}(X)$ are positive
and that each $T_j$ saturates $X$. Suppose that $1 \leq p_j < \infty$ for all $j$, and that $\theta_j \geq 0$ are such that $\sum_{j=1}^d \theta_j = 1$. Finally, suppose that
\begin{equation*}
\int_X \prod_{j=1}^d (T_jf_j)(x)^{p_j \theta_j} {\rm d} \mu(x)
\leq B \prod_{j=1}^d \Big\|f_j\Big\|_{L^{p_j}(Y_j)}^{p_j \theta_j}
\end{equation*}
for all nonnegative simple functions $f_j$ on ${Y}_j$, $1 \leq j \leq d$.
Then there exist nonnegative measurable functions 
$\phi_j$ on $X$ such that
\begin{equation*}
  \prod_{j=1}^d \phi_j(x)^{\theta_j} \geq 1
\end{equation*}
almost everywhere on $X$ and such that for each $j$, 
\begin{equation*}
\left(\int_X |T_jf_j(x)|^{p_j} \phi_j(x) {\rm d}\mu(x)\right)^{1/p_j} \leq B^{1/p_j} \|f_j\|_{L^{p_j}(Y_j)}
\end{equation*}
for all simple functions $f_j$ on ${Y}_j$.
\end{thm}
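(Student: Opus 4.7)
The plan is to reformulate the conclusion of Theorem~\ref{qwm_1} as a saddle-point problem and solve it by a minimax argument analogous to the convex-optimisation proof of Case~I of Theorem~\ref{thmmainbaby} in \cite{CHV1}. By positivity of the $T_j$ we may restrict to nonnegative simple $f_j$ throughout. Setting
\[
\Phi := \bigl\{(\phi_j)_{j=1}^d : \phi_j \geq 0 \text{ measurable},\ \prod\nolimits_j \phi_j^{\theta_j} \geq 1 \text{ a.e.\ on } X\bigr\},\quad K_j := \{(T_jf_j)^{p_j} : \|f_j\|_{p_j} \leq 1\},
\]
the desired conclusion is equivalent to
\[
\inf_{(\phi_j) \in \Phi}\ \max_{1 \leq j \leq d}\ \sup_{g_j \in \hat K_j}\ \int_X g_j\phi_j \,\mathrm{d}\mu \leq B,
\]
where $\hat K_j$ is the convex hull of $K_j$; this enlargement is harmless since the constraint is linear in $g_j$.

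I would then encode $\max_j a_j = \sup_{(\lambda_j)\in\Delta}\sum_j \lambda_j a_j$ (over the simplex $\Delta$) to obtain the bilinear objective $\sum_j \lambda_j \int g_j\phi_j\,\mathrm{d}\mu$ on $\Phi \times (\Delta \times \prod_j \hat K_j)$, and apply a minimax theorem. For fixed $(\lambda_j, g_j)$, the pointwise problem
$\min\bigl\{\sum_j \lambda_j g_j(x)\phi_j :\ \phi_j \geq 0,\ \prod_j \phi_j^{\theta_j} \geq 1\bigr\}$
is a weighted-AM--GM Lagrange exercise whose value is $\prod_j \bigl(\lambda_j g_j(x)/\theta_j\bigr)^{\theta_j}$, attained at $\phi_j(x) \propto \prod_k (\lambda_k g_k(x)/\theta_k)^{\theta_k}/(\lambda_j g_j(x))$; integrating gives
\[
\inf_{(\phi_j) \in \Phi}\sum_j \lambda_j \int g_j\phi_j\,\mathrm{d}\mu = \prod_j (\lambda_j/\theta_j)^{\theta_j}\cdot \int_X \prod_k g_k^{\theta_k}\,\mathrm{d}\mu.
\]
A short computation shows $\sup_{(\lambda_j) \in \Delta}\prod_j (\lambda_j/\theta_j)^{\theta_j} = 1$ (attained at $\lambda_j = \theta_j$). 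Moreover, the map $(g_j) \mapsto \int \prod_k g_k^{\theta_k}$ is concave in each $g_k$ separately, so iterating through the indices we see that the supremum over $\prod_j \hat K_j$ is attained on extremal tuples $((T_jf_j)^{p_j})$ with $\|f_j\|_{p_j} \leq 1$, where the value is $\int \prod_k (T_kf_k)^{p_k\theta_k}\,\mathrm{d}\mu \leq B$ by the hypothesis of the theorem. Combining these ingredients yields $\sup_{(\lambda_j, g_j)}\inf_{(\phi_j)} \sum_j \lambda_j \int g_j\phi_j \leq B$.

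The main obstacle is the minimax interchange itself and the subsequent extraction of a measurable tuple $(\phi_j) \in \Phi$ that realises (or at least approaches) the infimum. The set $\Phi$ is not compact in any natural topology and individual $\phi_j$ can be arbitrarily large or small on various subsets of $X$; I would handle this through a localisation and exhaustion argument using $\sigma$-finiteness of $X$ to reduce to finite-measure pieces, applying weak compactness in $L^1$ on truncated pieces, and then diagonalising. The saturation hypothesis on each $T_j$ enters precisely here: it ensures that the extracted weights $\phi_j$ are finite a.e.\ and that the constraint $\prod_j \phi_j^{\theta_j} \geq 1$ is genuinely realised rather than merely approached in a limit. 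These technical manoeuvres parallel those carried out in the proof of Case~I of Theorem~\ref{thmmainbaby} in \cite{CHV1}.
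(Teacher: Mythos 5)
You take a different route from the paper. Where the paper reduces the statement to the $p_j=1$ case (Case~I of Theorem~\ref{thmmainbaby}) via the vector-valued reformulation of disentanglement (Theorem~\ref{vector}) together with the scalar-to-vector Lemma~\ref{scalar_to_vector}, you propose to re-run the convex-optimisation / minimax machinery of \cite{CHV1} directly on the $p_j$-power formulation. That is in principle a viable alternative, but your execution contains a genuine error in the step that bounds $\sup_{(g_j)\in\prod_j\hat K_j}\int_X\prod_k g_k^{\theta_k}\,{\rm d}\mu$.

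The assertion that concavity of $(g_j)\mapsto \int_X\prod_k g_k^{\theta_k}\,{\rm d}\mu$ in each $g_k$ forces the supremum over $\prod_j\hat K_j$ to be attained on extremal tuples has the implication backwards: it is \emph{convex} functionals that attain their suprema on compact convex sets at extreme points, while for a concave functional the value at a proper convex combination can strictly exceed the values at the extremes --- which is exactly the obstruction here. For a concrete instance take $d=2$, $\theta_1=\theta_2=\tfrac12$, $X=\{0,1\}$ with counting measure, $K_1=\{(1,0),(0,1)\}$, $K_2=\{(1,1)\}$: the functional equals $1$ at both extreme pairs but equals $\sqrt{2}$ at $\bigl((\tfrac12,\tfrac12),(1,1)\bigr)$. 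The bound you need, $\sup_{\prod_j\hat K_j}\int_X\prod_k g_k^{\theta_k}\,{\rm d}\mu\leq B$, is nonetheless true in your setting, but its proof must invoke positivity of $T_j$ together with pointwise $\ell^{p_j}$-duality: for $g_j=\sum_k c_{jk}(T_jf_{jk})^{p_j}$ one has the pointwise bound $g_j^{1/p_j}\leq T_jF_j$ with $F_j=\bigl(\sum_k c_{jk}|f_{jk}|^{p_j}\bigr)^{1/p_j}$ and $\|F_j\|_{p_j}\leq 1$, after which the scalar hypothesis applies. This is precisely the content of the paper's Lemma~\ref{scalar_to_vector}, and it is the single place where positivity is indispensable. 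With this repair your plan should go through, though you would still have to carry out in full the minimax interchange and the extraction of measurable weights $\phi_j$, which the paper deliberately sidesteps by invoking the already-proved Case~I of Theorem~\ref{thmmainbaby}.
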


\medskip 
In analogy with the Case I of Theorem~\ref{thmmainbaby}, we shall also call this result a disentanglement theorem, and it is an instance of the general disentanglement theorem for positive operators on $p_j$-convex spaces which we shall present as Theorem~\ref{qwm_bb}.

\medskip
\noindent
As the reader will have noticed, by homogeneity we may take $B = 1$ (and $A=1$ in earlier results)
without loss. (And by playing with homogeneities the constant $B^{1/p_j}$ can be replaced with $B^{(\sum_{j=1}^d p_j \theta_j)^{-1}}$).

\medskip
\noindent
In order to address our main concern in the paper -- the extension of the theory to include general linear operators which are not necessarily positive -- we shall consider the analogous situation under hypotheses of Rademacher-type in place of $p$-convexity. Our use of $p$-convexity and Rademacher-type proceeds in parallel with their deployment in the development of the Maurey theory, see \cite{GCRdeF,kalton2016}. For now we state a sample theorem, 
which, in the case that the normed spaces $\mathcal{Y}_j$ are $L^{r_j}$-spaces, answers the  Basic Question. We shall significantly generalise this result later, see Theorem~\ref{qwm_ba}.

\begin{thm}
\label{answer:general_operators}
Suppose that $X$ and $Y_j$, for $j=1,\dots,d$, are measure spaces and that $X$ is $\sigma$-finite.
Suppose that $T_j: \mathcal{S}(Y_j) \to \mathcal{M}(X)$ are linear (not necessarily positive) operators
    and that each $T_j$ saturates $X$. Suppose that\footnote{The proof will reveal that the result remains valid under the weaker assumption $0 < r_j < \infty$, provided that we accordingly modify \eqref{condition:general_operators} to $0 < p_j < r_j$ for those $j$ for which $0< r_j < 2$.} $1 \leq r_j < \infty$ for all $j$. 
Finally, suppose that for some exponents $\gamma_j > 0$ we have
\begin{equation}\label{abc_17_a}
\int_X \prod_{j=1}^d |T_jf_j(x)|^{\gamma_j} {\rm d} \mu(x)
\leq A^{\sum_{j=1}^d \gamma_j} \prod_{j=1}^d \Big\|f_j\Big\|_{L^{r_j}(Y_j)}^{\gamma_j}
\end{equation}
for all simple functions $f_j$ on ${Y}_j$, $1 \leq j \leq d$.

\medskip
\noindent
Then for all $(p_j)$ such that  $\sum_{j=1}^d \gamma_j /p_j = 1$ and 
\begin{equation}
 \begin{cases} 
      0< p_j <r_j & \text{for those $j$ for which $1\leq r_j<2$} \\
      0< p_j \leq 2 & \text{for those $j$ for which $2\leq r_j<\infty$},\\ 
    \end{cases}
       \label{condition:general_operators}
\end{equation}

 there exist nonnegative $\phi_j$ such that
\begin{equation*}
\prod_{j=1}^d \phi_j(x)^{\gamma_j/p_j} \geq 1
  \end{equation*}
a.e. on $X$ and such that
\begin{equation*}
  \left(\int_X |T_j f_j(x)|^{p_j} \phi_j(x) {\rm d} \mu(x) \right)^{1/p_j}  \lesssim_{\{\gamma_j,r_j,p_j\}} A \|f_j\|_{L
   ^{r_j}(Y_j)}
 \end{equation*}
for all $f_j \in \mathcal{S}(Y_j)$.
\end{thm}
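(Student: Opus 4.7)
My plan is to reduce the non-positive setting to the positive-case disentanglement (Theorem~\ref{qwm_1}, or its $p$-convex generalisation Theorem~\ref{qwm_bb}) by a randomisation argument that replaces each $T_j$ by a positive-valued ``square-function-type'' aggregate. The randomisation uses Rademacher variables when $r_j\ge 2$ and $p_j$-stable variables when $1\le r_j<2$, mirroring the Rademacher-type versus $p$-stable type of $L^{r_j}$ noted in the abstract.

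Concretely, for finite sequences $(f_{j,k})_k\subset \mathcal{S}(Y_j)$ I would substitute the randomised inputs $\sum_k \xi_{j,k}f_{j,k}$ into \eqref{abc_17_a}, where $\xi_{j,k}$ are independent symmetric random variables (Rademacher or suitably $p_j$-stable according to $r_j$), independent also across $j$. Linearity of $T_j$ pulls the sum outside, and after Fubini the integrand factorises pointwise in $x$ thanks to the independence across $j$. Applying Khintchine (in the Rademacher case) or the explicit scale of a $p_j$-stable sum (in the stable case, choosing the stability index so that the requisite $\gamma_j$-moments are finite) to each factor bounds the left-hand side from below by
\[
\int_X \prod_j F_j(x)^{\gamma_j}\, d\mu(x),\qquad F_j(x):=\Bigl(\sum_k |T_j f_{j,k}(x)|^{q_j}\Bigr)^{1/q_j},
\]
with $q_j=2$ if $r_j\ge 2$ and $q_j=p_j$ if $r_j<2$. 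On the right-hand side, Kahane's inequality equalises all moments of the randomised $L^{r_j}$-sum, and then Rademacher type $2$ of $L^{r_j}$ (for $r_j\ge 2$) or $p_j$-stable type of $L^{r_j}$ (for $p_j<r_j<2$) controls this by $\bigl(\sum_k \|f_{j,k}\|_{r_j}^{q_j}\bigr)^{\gamma_j/q_j}$. The outcome is a fully positive multilinear inequality for the aggregates $F_j$ with sequence-space norms in $\ell^{q_j}(L^{r_j}(Y_j))$.

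I would then invoke the positive-operator disentanglement theorem on $p_j$-convex lattices (Theorem~\ref{qwm_bb}), with domain $\ell^{q_j}(L^{r_j})$, which is $q_j$-convex and so permits any $p_j\le q_j$ (consistent with the stated range $p_j\le 2$ or $p_j<r_j$). This produces nonnegative weights $\phi_j$ with $\prod_j\phi_j^{\gamma_j/p_j}\ge 1$ a.e.\ on $X$ and individual bounds
\[
\Bigl(\int_X F_j(x)^{p_j}\, \phi_j(x)\, d\mu(x)\Bigr)^{1/p_j}\lesssim A\Bigl(\sum_k \|f_{j,k}\|_{r_j}^{q_j}\Bigr)^{1/q_j}.
\]
Finally, specialising to the singleton sequence $(f_j,0,0,\dots)$ collapses $F_j$ to $|T_j f_j|$ and the right-hand norm to $\|f_j\|_{L^{r_j}}$, which is precisely the disentangled conclusion that we seek.

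The principal obstacle will be the rigorous application of the positive theorem to the aggregates $F_j$, which are only subadditive and positively homogeneous (not linear) in the sequence $(f_{j,k})_k$. This is where the \emph{vector-valued reformulation of disentanglement} advertised in the abstract must come in: I would recast $F_j$ as the $\ell^{q_j}$-norm of the genuinely linear, sequence-valued extension $\tilde T_j:(f_{j,k})_k\mapsto (T_j f_{j,k})_k$ into $\mathcal{M}(X;\ell^{q_j})$, and prove a variant of Theorem~\ref{qwm_bb} that accommodates this norm output (this is the substantive technical innovation required). A secondary but genuine issue is the strictness $p_j<r_j$ when $r_j<2$: it reflects the fact that $L^{r_j}$ has $p$-stable type only for $p<r_j$ and not at the endpoint, so some slack must be left to invoke the stable-type bound non-trivially.
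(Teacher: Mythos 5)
Your strategy is the same as the paper's at heart: randomise the inputs, exploit the type of $L^{r_j}$ (Khintchine for Rademacher variables when $p_j=2$, $p$-stable variables otherwise), and then appeal to a disentanglement principle for the resulting vector-valued estimate. That is precisely the path taken in the proof of Theorem~\ref{qwm_ba}, of which Theorem~\ref{answer:general_operators} is the $L^{r_j}$ special case.

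The genuine gap is exactly the step you flag as ``the substantive technical innovation required.'' You propose to apply Theorem~\ref{qwm_bb} to the aggregate $F_j(x)=\bigl(\sum_k |T_jf_{jk}(x)|^{q_j}\bigr)^{1/q_j}$, treating it as a positive operator on $\ell^{q_j}(L^{r_j})$; but $F_j$ is only sublinear, not linear, so Theorem~\ref{qwm_bb} as stated cannot be invoked and you leave the needed extension unproved. The point you are missing is that this extension is already in the paper as Theorem~\ref{vector}, which requires only degree-one homogeneity (not linearity, not positivity) of the maps and takes the $(p_j)$-vector-valued inequality itself as its hypothesis; moreover, Theorem~\ref{vector} is shown to be \emph{equivalent} to Case~I of Theorem~\ref{thmmainbaby}, so there is nothing new to prove. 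Using Theorem~\ref{vector} directly on the original $T_j$ also removes the unnecessary detour through the sequence-space domain $\ell^{q_j}(L^{r_j})$ and the subsequent ``collapse to singletons'': the conclusion of Theorem~\ref{vector} already delivers the weights $\phi_j$ and the single-function bounds.

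Two smaller remarks. First, the paper randomises with $p_j$-stable variables (Rademacher when $p_j=2$) keyed to the \emph{target} exponent $p_j$, not to $r_j$; your choice $q_j=2$ for $r_j\ge 2$ would handle only the endpoint $p_j=2$ directly and needs an extra interpolation to reach $p_j<2$ in that regime. Second, the control of the randomised $\mathcal{Y}_j$-norm comes from Pisier's result (Lemma~\ref{lemma:stable_rademacher}) that Rademacher type $r$ implies $p$-stable type for $p<r$ in the regime of moments of order $q<p$; the strictness $p_j<r_j$ when $r_j<2$ and the appearance of moments of order $p_j\theta_j<p_j$ are both driven by this lemma rather than by ``$p$-stable type of $L^{r_j}$'' per se, though your intuition about why the slack is needed is correct.
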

\medskip
\noindent

\noindent
\medskip

\begin{remark}
In the Appendix below we give an example of linear operators $(T_j)$ satisfying \eqref{abc_17_a}, for which the set of $(p_j)$ satisfying $0 < p_j < \infty$ and $\sum_{j=1}^d \gamma_j /p_j = 1$, and for which the conclusion of Theorem~\ref{answer:general_operators} holds, consists {\em precisely} of those satisfying \eqref{condition:general_operators}. See Corollary~\ref{seis_diez}. Thus the condition \eqref{condition:general_operators} is sharp if we want our result to hold broadly for linear operators without further reference to their individual properties. For specific operators $T_j$ the conclusion may nevertheless hold even if \eqref{condition:general_operators} is violated.
\end{remark}

\medskip

Note that the set of $(p_j)$ satisfying $\sum_{j=1}^d \gamma_j /p_j = 1$ together with \eqref{condition:general_operators} will be nonempty if and only if
\begin{equation*}
 \begin{cases} 
   \sum_{j=1}^d \gamma_j/\min\{r_j, 2\} \; < 1   & \text{when at least one $r_j<2$} \\
    \sum_{j=1}^d \gamma_j \leq 2  & \text{when all $2\leq r_j<\infty$}.\\  \end{cases}
      \end{equation*}
In Corollary~\ref{seis_diez} we demonstrate, by example, that if this condition is violated, the set of $(p_j)$ satisfying the conclusion of Theorem~\ref{answer:general_operators} may indeed be empty.

\medskip
\noindent
The special case of this result corresponding to $p_j =2$ for all $j$ is singled out:

\begin{thm}[Disentanglement for general linear operators on Lebesgue spaces]\label{lindis_2}
 Suppose that $X$ and $Y_j$, for $j=1,\dots,d$, are measure spaces and that $X$ is $\sigma$-finite.
Suppose that the linear
operators $T_j: \mathcal{S}(Y_j) \to \mathcal{M}(X)$ saturate $X$. Suppose that $\theta_j > 0$ and $\sum_{j=1}^d \theta_j = 1$. Finally, suppose that
for some exponents $2 \leq r_j < \infty$ we have
\begin{equation*}
\int_X \prod_{j=1}^d |T_jf_j(x)|^{2 \theta_j} {\rm d} \mu(x)
\leq B \prod_{j=1}^d \Big\|f_j\Big\|_{L^{r_j}(Y_j)}^{2\theta_j}
\end{equation*}
for all simple functions $f_j$ on ${Y}_j$, $1 \leq j \leq d$.
Then there exist nonnegative measurable functions 
$\phi_j$ on $X$ such that
\begin{equation*}
  \prod_{j=1}^d \phi_j(x)^{\theta_j} \geq 1
\end{equation*}
almost everywhere on $X$ and such that for each $j$, 
\begin{equation*}
\left(\int_X |T_jf_j(x)|^{2} \phi_j(x) {\rm d}\mu(x)\right)^{1/2} \lesssim B^{1/2} \|f_j\|_{L^{r_j}(Y_j)}
\end{equation*}
for all simple functions $f_j$ on ${Y}_j$.
  \end{thm}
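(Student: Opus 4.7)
The strategy is to use Rademacher randomisation combined with the Rademacher type~$2$ property of $L^{r_j}$ (valid since $r_j \geq 2$) to reduce the problem to a disentanglement inequality for positive operators on a $2$-convex Banach lattice, which can then be handled by the general $p$-convex version of Theorem~\ref{qwm_1} (i.e.\ Theorem~\ref{qwm_bb}) applied with $p_j = 2$ throughout.

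\medskip

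First I would fix finite sequences $(f_j^k)_{k=1}^N \subset \mathcal{S}(Y_j)$ for each $j$ and apply the hypothesis to the randomised input $\sum_k \epsilon_{j,k} f_j^k$, where $\{\epsilon_{j,k}\}$ are independent Rademacher variables. Using linearity of $T_j$, taking expectations, and appealing to Fubini together with the independence across $j$, the hypothesis becomes
\[
\int_X \prod_j \mathbb{E}_{\epsilon_{j,\cdot}} \Bigl|\sum_k \epsilon_{j,k} T_j f_j^k(x)\Bigr|^{2\theta_j} {\rm d}\mu(x) \leq B \prod_j \mathbb{E}_{\epsilon_{j,\cdot}} \Bigl\|\sum_k \epsilon_{j,k} f_j^k\Bigr\|_{L^{r_j}}^{2\theta_j}.
\]
Khintchine's inequality applied pointwise in $x$ with exponent $2\theta_j \leq 2$ provides the lower bound $(\sum_k |T_j f_j^k(x)|^2)^{\theta_j}$ (up to constants depending on $\theta_j$) for each factor on the left. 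On the right, Kahane's inequality replaces $\mathbb{E}\|\cdot\|^{2\theta_j}$ by $(\mathbb{E}\|\cdot\|^2)^{\theta_j}$ up to constants, and the Rademacher type~$2$ property of $L^{r_j}$ then bounds $\mathbb{E}\|\sum_k \epsilon_{j,k} f_j^k\|_{L^{r_j}}^2$ by a constant times $\sum_k \|f_j^k\|_{L^{r_j}}^2$. Combining these ingredients yields the square-function inequality
\[
\int_X \prod_j \Bigl(\sum_k |T_j f_j^k(x)|^2\Bigr)^{\theta_j} {\rm d}\mu(x) \lesssim B \prod_j \Bigl(\sum_k \|f_j^k\|_{L^{r_j}}^2\Bigr)^{\theta_j}.
\]

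\medskip

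Next I would view $\vec{f}_j = (f_j^k)_k$ as an element of the Banach lattice $\ell^2(L^{r_j}(Y_j))$, which is $2$-convex (with constant $1$) for $r_j \geq 2$ by Minkowski, and define the positive, positively homogeneous, sublinear operator $\widetilde{T}_j(\vec{f}_j)(x) := (\sum_k |T_j f_j^k(x)|^2)^{1/2}$. The inequality above then reads $\int_X \prod_j \widetilde{T}_j(\vec{f}_j)^{2\theta_j}\, {\rm d}\mu \lesssim B \prod_j \|\vec{f}_j\|^{2\theta_j}$, which is exactly the hypothesis of the $p$-convex positive disentanglement theorem with $p_j = 2$ and $\gamma_j = 2\theta_j$ (so that $\sum_j \gamma_j / p_j = 1$). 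Invoking Theorem~\ref{qwm_bb} thus produces nonnegative measurable weights $\phi_j$ on $X$ with $\prod_j \phi_j^{\theta_j} \geq 1$ a.e.\ and
\[
\Bigl(\int_X \widetilde{T}_j(\vec{f}_j)^2 \phi_j \, {\rm d}\mu\Bigr)^{1/2} \lesssim B^{1/2} \|\vec{f}_j\|_{\ell^2(L^{r_j})}
\]
uniformly in $\vec{f}_j$. Specialising to the singleton vector $\vec{f}_j = (f_j)$ gives $\widetilde{T}_j(\vec{f}_j) = |T_j f_j|$ and $\|\vec{f}_j\| = \|f_j\|_{L^{r_j}}$, which is the desired conclusion.

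\medskip

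The principal technical obstacle is the invocation of Theorem~\ref{qwm_bb}: one needs a disentanglement theorem for positive operators that allows a $2$-convex source lattice (rather than a plain $L^p$-space) and tolerates sublinear, positively homogeneous positive operators in place of merely linear ones. The sublinearity can be sidestepped by linearising via the vector-valued linear operator $\widehat{T}_j \vec{f} := (T_j f^k)_k$ taking values in $\ell^2$-valued measurable functions on $X$, the pointwise $\ell^2$-norm then reproducing $\widetilde{T}_j$; the extension of the disentanglement theory from $L^{r_j}$-sources to general $p$-convex sources is therefore the substantive new ingredient required beyond the Rademacher randomisation step itself.
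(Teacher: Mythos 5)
Your Rademacher randomisation step is essentially the paper's own argument: apply Khintchine's inequality pointwise in $x$, use the scalar-valued hypothesis for the randomised inputs $\sum_k\epsilon_{jk}f_{jk}$, and then use Jensen's inequality $\mathbb{E}(X^{\theta_j})\leq(\mathbb{E}X)^{\theta_j}$ (the full strength of Kahane is not needed, though it is not wrong) together with the Rademacher-type-$2$ property of $L^{r_j}$ for $r_j\geq 2$ to arrive at the square-function inequality
\begin{equation*}
\int_X \prod_{j=1}^d \Bigl(\sum_{k=1}^N |T_j f_{jk}(x)|^{2}\Bigr)^{\theta_j} {\rm d}\mu(x) \lesssim_{\{\theta_j\}} B \prod_{j=1}^d \Bigl(\sum_{k=1}^N \|f_{jk}\|_{L^{r_j}}^{2}\Bigr)^{\theta_j}.
\end{equation*}
This is exactly the quantitative heart of the matter, and you have it right.

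The way you propose to finish, however, does not go through. You want to view the square-function inequality as a scalar-valued bound for the positive, $1$-homogeneous but only \emph{sublinear} map $\widetilde{T}_j(\vec f_j)=(\sum_k|T_jf_{jk}|^2)^{1/2}$ on the $2$-convex lattice $\ell^2(L^{r_j})$, and then invoke Theorem~\ref{qwm_bb}. But Theorem~\ref{qwm_bb} is stated and proved only for positive \emph{linear} operators; its engine, Lemma~\ref{scalar_to_vector}, linearises $(\sum_k|T_jf_{jk}|^{p_j})^{1/p_j}$ by commuting $T_j$ past an $\ell^{p_j'}$-duality sup, a step that fails for sublinear maps (and the paper remarks on precisely this after the proof of Lemma~\ref{scalar_to_vector}). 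Your suggested fix --- replacing $\widetilde{T}_j$ by the genuinely linear $\widehat{T}_j\vec f=(T_jf^k)_k$ --- introduces an $\ell^2$-valued target, to which Theorem~\ref{qwm_bb} (for scalar-valued $\mathcal{M}(X)$) also does not apply. In fact no extension of Theorem~\ref{qwm_bb} to sublinear or vector-valued operators is needed: the square-function inequality you have derived is \emph{verbatim} the hypothesis of Theorem~\ref{vector} with $p_j=2$ and $\mathcal{Y}_j=L^{r_j}$. Theorem~\ref{vector} requires only $1$-homogeneity of the $T_j$ and saturation --- no positivity, no linearity, no lattice structure on the codomain --- and it delivers directly the weights $\phi_j$ with $\prod_j\phi_j^{\theta_j}\geq1$ a.e.\ and $\bigl(\int_X|T_jf_j|^2\phi_j\,{\rm d}\mu\bigr)^{1/2}\lesssim B^{1/2}\|f_j\|_{L^{r_j}}$. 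This is the whole point of the vector-valued reformulation developed in Section~\ref{sec:vector_valued_disentanglement}; the substantive abstract input is the disentanglement theorem of \cite{CHV1} (Case~I of Theorem~\ref{thmmainbaby}), not the $p$-convexity machinery of Section~\ref{sec:positive_operators_convexity}, which plays no role here.
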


Theorem~\ref{lindis_2} readily upgrades to the following result (see Section~\ref{revisit}), whose formulation can be compared to Case II of Theorem~\ref{thmmainbaby}:
\begin{thm}[Multilinear duality for general operators on Lebesgue spaces]\label{lin_q}
 Suppose that $X$ and $Y_j$, for $j=1,\dots,d$, are measure spaces and that $X$ is $\sigma$-finite.
Suppose that the linear
operators $T_j: \mathcal{S}(Y_j) \to \mathcal{M}(X)$ saturate $X$. Suppose that $\alpha_j > 0$ and  $\sum_{j=1}^d \alpha_j = 1$. Finally, suppose that
for some exponents $q \geq 2$ and $2 \leq r_j < \infty$ we have
\begin{equation*}
\left\|\prod_{j=1}^d |T_jf_j|^{\alpha_j}\right\|_q 
\leq B \prod_{j=1}^d \Big\|f_j\Big\|_{L^{r_j}(Y_j)}^{\alpha_j}
\end{equation*}
for all simple functions $f_j$ on ${Y}_j$, $1 \leq j \leq d$.
Then for every nonnegative $G \in L^{(q/2)'}$ there exist nonnegative measurable functions 
$g_j$ on $X$ such that
\begin{equation*}
  \prod_{j=1}^d g_j(x)^{\alpha_j} \geq G(x)
\end{equation*}
almost everywhere on $X$ and such that for each $j$, 
\begin{equation*}
\left(\int_X |T_jf_j(x)|^{2} g_j(x) {\rm d}\mu(x)\right)^{1/2} \lesssim B \|G\|_{(q/2)'}\|f_j\|_{L^{r_j}(Y_j)}
\end{equation*}
for all simple functions $f_j$ on ${Y}_j$.
  \end{thm}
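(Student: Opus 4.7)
The plan is a duality-plus-reweighting reduction of Theorem~\ref{lin_q} to the disentanglement result Theorem~\ref{lindis_2}. The idea is to convert the $L^q$-norm hypothesis into a weighted $L^1$-hypothesis on $X$, interpret the weight $G$ as the density of a new measure on $X$, and then quote Theorem~\ref{lindis_2} for that new measure.

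First I would raise the hypothesis to the second power, obtaining $\|\prod_j |T_jf_j|^{2\alpha_j}\|_{q/2} \leq B^2 \prod_j \|f_j\|^{2\alpha_j}$. Since $q\geq 2$, we have $q/2 \geq 1$, so by $L^{q/2}$--$L^{(q/2)'}$-duality, for every nonnegative $G\in L^{(q/2)'}(X)$,
\[
\int_X \prod_{j=1}^d |T_jf_j|^{2\alpha_j}\,G\,d\mu \;\leq\; B^2 \|G\|_{(q/2)'} \prod_{j=1}^d \|f_j\|^{2\alpha_j}.
\]
Fix such a $G$ and introduce the $\sigma$-finite measure $d\nu := G\,d\mu$ on $X$ (supported on $\{G>0\}$). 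The previous display rewrites as
\[
\int_X \prod_{j=1}^d |T_jf_j|^{2\alpha_j}\,d\nu \;\leq\; B^2\|G\|_{(q/2)'} \prod_{j=1}^d \|f_j\|^{2\alpha_j},
\]
which is precisely the hypothesis of Theorem~\ref{lindis_2} on the reweighted space $(X,\nu)$ with the same $\alpha_j$ and $r_j$ and with constant $B^2\|G\|_{(q/2)'}$ in place of $B$.

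Applying Theorem~\ref{lindis_2} then produces nonnegative measurable $\tilde\phi_j$ on $X$ with $\prod_j \tilde\phi_j^{\alpha_j}\geq 1$ $\nu$-almost everywhere and with $(\int_X|T_jf_j|^2\tilde\phi_j\,d\nu)^{1/2} \lesssim B\|G\|_{(q/2)'}^{1/2}\|f_j\|_{L^{r_j}(Y_j)}$. Setting $g_j := G\tilde\phi_j$ (extended by zero on $\{G=0\}$), I obtain $\prod_j g_j^{\alpha_j} = G\prod_j \tilde\phi_j^{\alpha_j}\geq G$ $\mu$-almost everywhere, together with $\int_X|T_jf_j|^2 g_j\,d\mu = \int_X|T_jf_j|^2\tilde\phi_j\,d\nu$. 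Taking square roots delivers the desired bound; the natural exponent of $\|G\|_{(q/2)'}$ that drops out is $1/2$, and the stated formulation is then an immediate consequence up to the rescaling $g_j \mapsto \|G\|_{(q/2)'} g_j$.

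The main technical obstacle I anticipate is verifying the saturation hypothesis of Theorem~\ref{lindis_2} under the reweighted measure $\nu$ on its support $\{G>0\}$. Since $\nu$ and $\mu$ are mutually absolutely continuous there with strictly positive density $G$, the saturation of each $T_j$ should transfer from $\mu$ to $\nu$; but this should be formalized by reducing to an exhaustion of $\{G>0\}$ by sets on which $G$ is bounded above and below, so that $\sigma$-finiteness and saturation with respect to $\nu$ hold cleanly before Theorem~\ref{lindis_2} is invoked.
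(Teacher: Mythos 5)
Your argument is exactly the paper's: Theorem~\ref{lin_q} is the special case $p_j=2$, $\mathcal{Y}_j=L^{r_j}$ of Theorem~\ref{thm:lq_linear}, Case~II, and the paper's proof of the latter (given for Theorem~\ref{thm:lq_positive} and carried over verbatim) proceeds precisely via $L^{q/2}$--$L^{(q/2)'}$ duality, reweighting to the measure $G\,{\rm d}\mu$, invoking the disentanglement theorem for that measure (after noting that saturation and $\sigma$-finiteness transfer to $G\,{\rm d}\mu$), and then setting $g_j = G\,\tilde\phi_j$. Your construction, your treatment of the measure change, and your handling of the saturation issue all match the paper's route.

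One small point worth flagging: the exponent on $\|G\|_{(q/2)'}$ that the proof genuinely produces is $1/2$, as you correctly observe, and the paper's proof in fact normalises $\|G\|_{q'}=1$ so that this distinction disappears. Your proposed repair via the rescaling $g_j\mapsto \|G\|_{(q/2)'}\,g_j$ does match the bound stated in the theorem but only preserves the pointwise constraint $\prod_j g_j^{\alpha_j}\geq G$ when $\|G\|_{(q/2)'}\geq 1$; when $\|G\|_{(q/2)'}<1$ it gives $\prod_j g_j^{\alpha_j}\geq \|G\|_{(q/2)'}G$, which is weaker. The honest conclusion of this method for general $G$ carries $\|G\|_{(q/2)'}^{1/2}$ (correspondingly $\|G\|_{q'}^{1/p_j}$ in Theorem~\ref{thm:lq_linear}), and the paper's statement should be read with that caveat or with $G$ normalised; this is an imprecision inherited from the source, not a gap in your reasoning.
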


The converse statements to these three results are once again also true, and are easy to verify. 

\medskip
\noindent
Note that in these last three results we do not assert ``~$\leq$~'' but only ``~$\lesssim$~'' in
the conclusions, and moreover the case $r_j = \infty$ is excluded from Theorems~\ref{answer:general_operators} and \ref{lin_q}. This is ultimately because we shall need to apply Khintchine's inequality.
Note also the numerology familiar from harmonic analysis, in which
$L^p$-boundedness of a positive operator for $p > 1$ (such as a maximal
operator) often corresponds to $L^{2p'}$ boundedness of a corresponding  
nonpositive operator (such as a singular integral operator). Even in the
linear case $d=1$, the duality statement is along the lines that
$T: L^r \to L^q$ with $q,r \geq 2$ if and only if
$\| |T^\ast g|^2\|_{q'/2} \lesssim \||g|^2\|_{r'/2}$ (rather than $\| T^\ast g\|_{q'} \lesssim \|g\|_{r'}$).

\subsection{Multilinear restriction and the Mizohata--Takeuchi conjecture}
As an indication of the scope of Theorem~\ref{lin_q}, we consider the so-called multilinear restriction problem for the Fourier transform. For $1 \leq j \leq n$, let $\Gamma_j: U_j \to \mathbb{R}^n$ (with $U_j \subseteq \mathbb{R}^{n-1}$) be smooth parametrisations of compact hypersurfaces $S_j$ in $\mathbb{R}^n$ with nonvanishing gaussian curvature. We assume that the hypersurfaces are transversal in
the sense that if $\omega_j(x)$ denotes a unit normal to $S_j$ at $x \in S_j$, then $|\omega_1(x_1) \wedge \dots \wedge \omega_n(x_n)| \geq c >0$
for all $x_j \in S_j$. The Fourier extension (or dual restriction) operator $\mathcal{E}_j$ for $S_j$ is given by
$$ \mathcal{E}_j f_j(x) = \int_{U_j} e^{2 \pi i x \cdot \Gamma(t_j)} f_j(t_j) {\rm d} t_j,$$

It is conjectured (see \cite{BCT}) that these operators satisfy the multilinear bound
\begin{equation}\label{multextn_0}
  \int_{\mathbb{R}^n} \prod_{j=1}^n |\mathcal{E}_j f_j(x)|^{2/(n-1)} {\rm d} x
  \lesssim   \prod_{j=1}^n \|f_j\|_{L^2(U_j)}^{2/(n-1)}
\end{equation}
or equivalently
\begin{equation}\label{multextn}
  \|\prod_{j=1}^n |\mathcal{E}_j f_j(x)|^{1/n}\|_{2n/(n-1)}
  \lesssim   \prod_{j=1}^n \|f_j\|_{L^2(U_j)}^{1/n}.
  \end{equation}
This is known up to endpoints (see \cite{BCT}, \cite{tao2019}) but is as yet unresolved in the form stated here.

\medskip
\noindent
These considerations clearly fit into the framework which we were discussing above, in particular
Theorem~\ref{lin_q}, and we therefore have the following:

\begin{thm}[Factorisation for multilinear restriction]\label{CFMR}
 The multilinear restriction bound \eqref{multextn} holds if and only if for all nonnegative $G \in L^n(\mathbb{R}^n)$, there exist nonnegative $g_1, \dots , g_n$ such that
  $$ \prod_{j=1}^n g_j(x)^{1/n} \geq G(x)$$
  a.e. and, for all $j$,
  $$ \left(\int_{\mathbb{R}^n} |\mathcal{E}_j f_j(x)|^{2} g_j(x) {\rm d} x \right)^{1/2}\lesssim \|G\|_n \|f_j \|_2.$$
  \end{thm}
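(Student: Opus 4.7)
The plan is to apply Theorem~\ref{lin_q} directly, with $T_j := \mathcal{E}_j$, normed spaces $\mathcal{Y}_j := L^2(U_j)$ (so $r_j = 2$), number of operators $d = n$, exponents $\alpha_j = 1/n$, and target exponent $q = 2n/(n-1)$. The arithmetic checks are immediate: $q \geq 2$ since $n \geq 2$, $r_j = 2 \in [2,\infty)$, $\sum_{j=1}^n \alpha_j = 1$, and $(q/2)' = (n/(n-1))' = n$, which matches the $L^n$-space in which the pointwise factorising function $G$ is required to lie.

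The one non-arithmetic hypothesis to verify is that each $\mathcal{E}_j$ saturates $\mathbb{R}^n$. For this I would take any nonzero simple function $h$ supported on the bounded parameter domain $U_j$; then $\mathcal{E}_j h(x) = \int_{U_j} e^{2\pi i x\cdot \Gamma_j(t)} h(t)\,\mathrm{d}t$ is the restriction to $\mathbb{R}^n$ of an entire function on $\mathbb{C}^n$ (differentiation under the integral is justified by compactness of $U_j$ and boundedness of $h$), and is not identically zero, so its zero set has Lebesgue measure zero. Consequently, for any $E \subseteq \mathbb{R}^n$ of positive measure we may take $E' = E$ together with this $h$ as the witness of saturation.

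Once these verifications are in place, the forward direction of the theorem is just a specialisation of Theorem~\ref{lin_q}. For the converse, I would argue by $L^{n/(n-1)}$-$L^n$ duality and H\"older's inequality. Namely, squaring and dualising,
\[
\Bigl\|\prod_{j=1}^n |\mathcal{E}_j f_j|^{1/n}\Bigr\|_{2n/(n-1)}^2
= \sup_{G \geq 0,\ \|G\|_n \leq 1}\ \int_{\mathbb{R}^n} G \prod_{j=1}^n |\mathcal{E}_j f_j|^{2/n}\,\mathrm{d}x.
\]
For each admissible $G$, inserting the hypothesised bound $G \leq \prod_j g_j^{1/n}$ and then applying H\"older's inequality with $n$ equal exponents gives
\[
\int_{\mathbb{R}^n} G \prod_j |\mathcal{E}_j f_j|^{2/n}\,\mathrm{d}x
\leq \prod_{j=1}^n \Bigl(\int_{\mathbb{R}^n} g_j |\mathcal{E}_j f_j|^2\,\mathrm{d}x\Bigr)^{1/n}
\lesssim \prod_{j=1}^n \|f_j\|_2^{2/n},
\]
using $\|G\|_n \leq 1$ in the last step. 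Taking the supremum in $G$ and a square root recovers \eqref{multextn}. No step presents a genuine obstacle: the heavy lifting has been done in Theorem~\ref{lin_q}, the converse is a routine Hölder-duality argument (essentially a case of Lemma~\ref{prelim}), and the only content specific to the restriction setting is the analyticity-based saturation check.
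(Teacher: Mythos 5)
Your proposal is correct and takes essentially the same approach as the paper, which proves Theorem~\ref{CFMR} simply by observing that the hypotheses of Theorem~\ref{lin_q} are met (with $T_j=\mathcal{E}_j$, $r_j=2$, $\alpha_j=1/n$, $q=2n/(n-1)$, so $(q/2)'=n$) and invoking the general remark that the converse direction is an easy H\"older-duality argument. Your explicit verification of the saturation hypothesis via analytic continuation of $\mathcal{E}_j h$, and your worked-out converse, supply details the paper leaves implicit but do not change the route.
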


On the other hand, the corresponding endpoint multilinear Kakeya theorem is due to Guth (\cite{Guth}, see also \cite{CV}). He proved it by directly establishing the following fundamental factorisation result:

\begin{thm}[Guth, \cite{Guth}\label{Guththm}]
For $1 \leq j \leq n$, let $\mathcal{T}_j$ be families of doubly-infinite tubes of unit cross-section with transversal directions. For all nonnegative $G \in L^n(\mathbb{R}^n)$, there exist nonnegative $g_1, \dots , g_n$ such that
  $$ \prod_{j=1}^n g_j(x)^{1/n} \geq G(x)$$
  a.e. and, for all $j$ and $T \in \mathcal{T}_j$, 
  $$ \int_{T} g_j(x) {\rm d} x \lesssim \|G\|_n.$$
\end{thm}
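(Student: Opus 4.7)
The plan is to derive Theorem~\ref{Guththm} as an essentially tautological consequence of Case II of Theorem~\ref{thmmainbaby}, applied to the endpoint multilinear Kakeya inequality. The substantive input is the Kakeya inequality itself; multilinear duality then converts the $L^{n/(n-1)}$ bound into the pointwise factorisation statement.

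First I would recast the endpoint multilinear Kakeya inequality in multilinear-operator form. For each $j$, set $\mathcal{Y}_j = \ell^1(\mathcal{T}_j)$ (a normed lattice) and define the positive linear operator $T_j : \mathcal{Y}_j \to \mathcal{M}(\mathbb{R}^n)$ by
$$T_j(c)(x) = \sum_{T \in \mathcal{T}_j} c_T \chi_T(x).$$
The endpoint multilinear Kakeya inequality, extended to weighted sums $c_T \geq 0$ by homogeneity and density, is equivalent to
$$\left\| \prod_{j=1}^n (T_j f_j)^{1/n} \right\|_{L^{n/(n-1)}(\mathbb{R}^n)} \lesssim \prod_{j=1}^n \|f_j\|_{\ell^1(\mathcal{T}_j)}^{1/n}$$
for all nonnegative $f_j \in \ell^1(\mathcal{T}_j)$, which is precisely the hypothesis of Theorem~\ref{thmmainbaby} with $q = n/(n-1)$ and $\alpha_j = 1/n$.

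Next I would apply Case II of Theorem~\ref{thmmainbaby} (with $q' = n$) to produce, for each nonnegative $G \in L^n(\mathbb{R}^n)$, nonnegative $g_1,\dots,g_n$ with $\prod_{j=1}^n g_j^{1/n} \geq G$ almost everywhere and
$$\int_{\mathbb{R}^n} g_j(x)(T_j f_j)(x)\, \mathrm{d}x \lesssim \|G\|_n \|f_j\|_{\ell^1(\mathcal{T}_j)}$$
for every $f_j \in \ell^1(\mathcal{T}_j)$. Specialising to $f_j = \delta_T$ for a single tube $T \in \mathcal{T}_j$, so that $T_j f_j = \chi_T$ and $\|f_j\|_{\ell^1} = 1$, immediately yields $\int_T g_j(x)\, \mathrm{d}x \lesssim \|G\|_n$ for every $T \in \mathcal{T}_j$, which is the desired factorisation.

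The only real technicality is the saturation hypothesis of Theorem~\ref{thmmainbaby}: at a point not lying in any tube of some family $\mathcal{T}_j$, the operator $T_j$ fails to saturate. This is cosmetic — one restricts the ambient measure space to $X = \bigcap_j \bigcup_{T \in \mathcal{T}_j} T$ and, after obtaining $g_j$ on $X$, extends them off $X$ by any values large enough to maintain $\prod g_j^{1/n} \geq G$ (for instance $g_j = G$), which does not affect the tube integrals. The genuinely nontrivial ingredient is not this duality step but the endpoint multilinear Kakeya inequality itself, proved by Guth via algebraic topology and the polynomial method; accordingly Theorem~\ref{Guththm} should be read as the multilinear-duality repackaging of that bound.
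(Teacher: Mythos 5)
The paper does not supply a proof of Theorem~\ref{Guththm}: it is stated as a citation to Guth \cite{Guth}, and the surrounding text makes the logical direction explicit, saying that Guth proved the endpoint multilinear Kakeya theorem \emph{by} directly establishing this factorisation result. Your proposal runs in the opposite direction, treating the endpoint multilinear Kakeya inequality as the given and recovering the factorisation from it via Case~II of Theorem~\ref{thmmainbaby}. The two statements are indeed equivalent, and recognising this equivalence is precisely what the CHV duality framework is for (your reduction is essentially the converse of the argument given in the paper's proof of Proposition~1.11, which derives \eqref{multextn_0} from the $g_j$ by H\"older). But as a \emph{proof} of Theorem~\ref{Guththm} it is circular in substance: the only way known to establish the endpoint multilinear Kakeya inequality is Guth's polynomial-method argument (or its Borsuk--Ulam reworking in \cite{CV}), and both of those arguments produce the factorising weights $g_j$ as the central object. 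Invoking the inequality as a black box and then ``recovering'' the $g_j$ by duality does not reconstruct Guth's theorem from more primitive ingredients; it merely converts between two equivalent formulations.

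Separately, your handling of the saturation issue has a genuine gap. You restrict to $X = \bigcap_j \bigcup_{T \in \mathcal{T}_j} T$ (correct, and $T_j$ does saturate $X$ provided each $\mathcal{T}_j$ is countable), but the proposed extension $g_j := G$ on $X^c$ does \emph{not} leave the tube integrals unaffected. For $T \in \mathcal{T}_j$, the set $T \cap X^c$ is the part of $T$ missed by some \emph{other} family $\mathcal{T}_i$ and can have infinite measure; since $T$ itself has infinite measure, $\int_{T\cap X^c} G$ is not controlled by $\|G\|_n$ and can be $+\infty$ even for $G\in L^n$ (take $G$ to be a suitable lacunary sum of normalised indicators of dyadic pieces of $T$). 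A correct extension: fix any strictly positive $h$ on $X^c$ with $\int_{X^c} h$ small; for $x \in X^c$ choose (measurably) an index $j_0(x)$ with $x \notin \bigcup_{T\in\mathcal{T}_{j_0(x)}} T$, and set $g_{j_0(x)}(x) := G(x)^n h(x)^{-(n-1)}$ and $g_j(x) := h(x)$ for $j \neq j_0(x)$. Then $\prod_j g_j(x)^{1/n} = G(x)$ on $X^c$, and for $T \in \mathcal{T}_j$ every $x \in T \cap X^c$ has $j \neq j_0(x)$, so $\int_{T \cap X^c} g_j \leq \int_{X^c} h$, which can be made as small as desired. Only then does the reduction close.
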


Moreover, coming from entirely different considerations, there is a conjecture, often attributed to Mizohata
and Takeuchi, which states:

\begin{conj}[Mizohata--Takeuchi conjecture]
  Let $S$ be a compact hypersurface of nonvanishing gaussian curvature, with corresponding Fourier extension operator $\mathcal{E}$. Then, for any nonnegative weight $w$ we have
  $$\int_{\mathbb{R}^n} |\mathcal{E} f(x)|^{2} w(x) {\rm d} x \lesssim \sup_T w(T) \int |f(t)|^2 {\rm d}t$$
  where the {\rm sup} is taken over all doubly-infinite tubes of unit cross-section with direction normal to $S$.
\end{conj}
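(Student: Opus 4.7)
The plan is to approach this (famously open) conjecture either by direct Fourier-analytic means or by leveraging the multilinear factorisation framework of the present paper; in either case I expect genuine difficulty, and I do not claim any resolution.

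A first approach is via wave-packet decomposition. Fix a scale $R$, to be sent to infinity. Decompose $f = \sum_\tau f_\tau$ where each $f_\tau$ is supported in a cap $\tau \subset S$ of radius $R^{-1/2}$, and further expand $f_\tau = \sum_v c_{\tau,v}\phi_{\tau,v}$ in $L^2$-normalised wave packets whose extensions $\mathcal{E}\phi_{\tau,v}$ concentrate on essentially disjoint tubes $T_{\tau,v}$ of length $R$ and cross-section $R^{1/2}$, directed along the normal to $S$ at the centre of $\tau$. Discarding cross-terms, one formally obtains
$$\int_{\mathbb{R}^n} |\mathcal{E}f(x)|^2 w(x)\,\mathrm{d}x \;\lesssim\; \sum_{\tau,v} |c_{\tau,v}|^2 \frac{w(T_{\tau,v})}{|T_{\tau,v}|}\cdot |T_{\tau,v}| \;\lesssim\; \sup_T w(T)\,\|f\|_{L^2}^2,$$
since $\sum_{\tau,v}|c_{\tau,v}|^2 \approx \|f\|_{L^2}^2$ by near-orthogonality of the wave-packet system. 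The main obstacle, and precisely the reason the conjecture remains open, is the rigorous control of the cross-terms $\mathcal{E}\phi_{\tau,v}\overline{\mathcal{E}\phi_{\tau',v'}}$ tested against the \emph{non}-translation-invariant weight $w$: standard $TT^*$/Plancherel-type orthogonality loses too much, and no purely geometric substitute is known in the stated generality.

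A second approach, more in keeping with the present paper, would be to derive the Mizohata--Takeuchi bound (or a multilinear strengthening of it) from the conjectural multilinear restriction inequality \eqref{multextn} via Theorem~\ref{CFMR}. One would choose $G$ as a suitably normalised power of $w$, extract from Theorem~\ref{CFMR} factors $g_j$ satisfying $\prod_j g_j^{1/n}\geq G$ together with weighted $L^2$-bounds for each $\mathcal{E}_j$ against $g_j$, and combine these by AM--GM to produce a multilinear weighted $L^2$ estimate of Mizohata--Takeuchi type. The real difficulty — and the step I would expect to be the essential obstacle — is that the abstract disentanglement machinery developed here produces $g_j$ dominating $G$ only in a pointwise sense, and gives no geometric information about the individual factors: to replace $\|G\|_n$ by $\sup_T w(T)$ one would need the $g_j$ to inherit tube-concentration, which would presumably require a Kakeya-type geometric input analogous to Guth's Theorem~\ref{Guththm}. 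Supplying such a structural refinement of the factorisation theorem appears to require ideas genuinely beyond those of the present paper, and moreover the route via multilinear restriction cannot yield the linear MT estimate for all weights in any case, since MT is a strictly linear statement applying also to weights $w$ that do not factor across transversal directions.
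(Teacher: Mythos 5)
The statement you were asked about is not a theorem of the paper: it is the Mizohata--Takeuchi conjecture, which remains open. The paper merely records it as a conjecture and uses it \emph{conditionally}, together with Guth's endpoint multilinear Kakeya factorisation (Theorem~\ref{Guththm}), to deduce the endpoint multilinear restriction bound \eqref{multextn_0}; no proof is offered, and none is claimed. You correctly recognised that the statement is open and did not overclaim, which is the right call.

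Two remarks on the routes you sketched. Your first approach -- wave-packet decomposition with control of cross-terms against a general weight $w$ -- is the standard heuristic, and you have correctly located the obstruction: there is no known substitute for Plancherel-type orthogonality when testing against an arbitrary, non-translation-invariant weight. Your second approach runs in the wrong direction relative to the logic of the paper: the Proposition following the conjecture shows that Mizohata--Takeuchi (plus Guth's theorem) \emph{implies} the multilinear restriction bound, not the reverse, and Theorem~\ref{CFMR} is an abstract equivalence between multilinear restriction and a pointwise factorisation, supplying no geometric (tube-concentration) structure on the factors $g_j$. As you yourself observe, the extra geometric input that would be needed to convert $\|G\|_n$ into $\sup_T w(T)$ is precisely what Guth's theorem supplies in the Kakeya setting and what is missing here; so the route via Theorem~\ref{CFMR} cannot by itself recover the linear MT estimate, which in any case concerns a single operator and arbitrary $w$, outside the multilinear framework. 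Your assessment of both obstacles is sound.
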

Combining these last two statements we obtain:
\begin{prop}
Conditional on the Mizohata--Takeuchi conjecture, the multilinear restriction bound \eqref{multextn_0} holds. 
\end{prop}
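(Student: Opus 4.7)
The plan is to produce, for each nonnegative $G \in L^n(\mathbb{R}^n)$, weights $g_1,\dots,g_n$ via Guth's factorisation and then apply Mizohata--Takeuchi to each weight individually, after which a direct H\"older argument (essentially the ``if'' direction of Theorem~\ref{CFMR}) delivers \eqref{multextn_0}. The essential observation is that, once the transversality of the hypersurfaces is exploited, the tube families relevant to Theorem~\ref{Guththm} and to the Mizohata--Takeuchi conjecture coincide.

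First I would fix the tube families: for each $j$ let $\mathcal{T}_j$ consist of all doubly-infinite unit-cross-section tubes whose direction is normal to $S_j$ at some point. The hypothesis $|\omega_1(x_1)\wedge \cdots \wedge \omega_n(x_n)| \geq c > 0$ for $x_j \in S_j$ is exactly the transversality required by Theorem~\ref{Guththm} for the families $(\mathcal{T}_j)_j$. Given a nonnegative $G \in L^n(\mathbb{R}^n)$, Guth's theorem produces nonnegative weights $g_j$ with $\prod_{j=1}^n g_j(x)^{1/n} \geq G(x)$ a.e.\ and $\sup_{T \in \mathcal{T}_j} \int_T g_j \lesssim \|G\|_n$ for each $j$. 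Applying the Mizohata--Takeuchi conjecture to the hypersurface $S_j$ with weight $w = g_j$ then yields
\[
\int_{\mathbb{R}^n} |\mathcal{E}_j f_j(x)|^2 g_j(x) \, {\rm d}x \lesssim \Bigl(\sup_{T \in \mathcal{T}_j} \int_T g_j\Bigr) \|f_j\|_2^2 \lesssim \|G\|_n \|f_j\|_{L^2(U_j)}^2.
\]

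Combining the pointwise factorisation $\prod_j g_j^{1/n} \geq G$ with H\"older's inequality at the exponent $n$ in each factor,
\begin{align*}
\int_{\mathbb{R}^n} G(x) \prod_{j=1}^n |\mathcal{E}_j f_j(x)|^{2/n} \, {\rm d}x
&\leq \int_{\mathbb{R}^n} \prod_{j=1}^n \bigl(g_j(x) |\mathcal{E}_j f_j(x)|^2\bigr)^{1/n} {\rm d}x \\
&\leq \prod_{j=1}^n \Bigl(\int_{\mathbb{R}^n} g_j(x) |\mathcal{E}_j f_j(x)|^2 \, {\rm d}x\Bigr)^{1/n} \\
&\lesssim \|G\|_n \prod_{j=1}^n \|f_j\|_{L^2(U_j)}^{2/n}.
\end{align*}
Taking the supremum over nonnegative $G$ with $\|G\|_n \leq 1$, duality in $L^{n/(n-1)}$ gives $\|\prod_j|\mathcal{E}_j f_j|^{2/n}\|_{n/(n-1)} \lesssim \prod_j \|f_j\|_{L^2(U_j)}^{2/n}$, which upon raising to the power $n/(n-1)$ is precisely \eqref{multextn_0}.

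No individual step is difficult once the setup is in place; the main ``obstacle'', such as it is, is the alignment of Guth's family of transversal tubes with Mizohata--Takeuchi's family of tubes directed along normals to $S_j$, which is handled simultaneously by the transversality hypothesis on the hypersurfaces $S_1,\dots,S_n$.
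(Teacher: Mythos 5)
Your proof is correct and follows essentially the same route as the paper: fix the tube families $\mathcal{T}_j$ to consist of tubes normal to $S_j$, apply Guth's factorisation theorem to a test function $G$ in the unit ball of $L^n$, feed the resulting weights $g_j$ into the Mizohata--Takeuchi conjecture, and conclude by H\"older and $L^{n/(n-1)}$ duality. The only cosmetic difference is that you spell out the final duality step explicitly, whereas the paper states it up front as part of the setup.
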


  \begin{proof}
In order to establish \eqref{multextn_0}, we integrate the function $\prod_{j=1}^n |\mathcal{E}_j f_j(x)|^{2/n}$
against a test function $G$ in the unit ball of $L^n$.
We let $\mathcal{T}_j$ consist of tubes with directions normal to $S_j$. We apply Guth's theorem to $G$ obtain $g_j$ as in Theorem~\ref{Guththm}. Then
$$ \int_{\mathbb{R}^n} \prod_{j=1}^n |\mathcal{E}_j f_j(x)|^{2/n}G(x) {\rm d}x 
\leq  \int_{\mathbb{R}^n} \prod_{j=1}^n |\mathcal{E}_j f_j(x)|^{2/n}g_j(x)^{1/n} {\rm d}x \leq \prod_{j=1}^n  \left(\int_{\mathbb{R}^n} |\mathcal{E}_j f_j(x)|^2 g_j(x) {\rm d}x\right)^{1/n}$$
by H\"older's inequality. For each $j$ we have
$$\int_{\mathbb{R}^n} |\mathcal{E}_j f_j(x)|^2 g_j(x) {\rm d}x \lesssim \left(\sup_{T \in \mathcal{T}_j} \int_{T} g_j\right)  \int |f_j(t)|^2 {\rm d}t \lesssim \|f_j\|_2^2$$
by the Mizohata--Takeuchi conjecture and the second conclusion of Theorem
~\ref{Guththm}. Combining these estimates yields \eqref{multextn_0}.
  \end{proof}

\medskip
\noindent
\subsection{Structure of the paper}
In Section~\ref{sec:vector_valued_disentanglement} we first state and prove two results,  Theorem~\ref{functions} and Theorem~\ref{vector}, both equivalent to Case I of Theorem~\ref{thmmainbaby}, and then we indicate how we shall use vector-valued techniques to obtain our main theorems. In Section~\ref{sec:positive_operators_convexity} we discuss refinements of Theorem~\ref{thmmainbaby} for positive operators to the case of $p$-convex lattices; the main result here is Theorem~\ref{qwm_bb}. The case of general linear operators is taken up in Section~\ref{sec:general_operators_type}, and here we impose conditions of Rademacher-type; the main result in this setting is Theorem~\ref{qwm_ba}. In Section~\ref{revisit} we establish sharp multilinear duality and Maurey-type factorisation theorems for both positive and general linear operators, in Theorems~\ref{thm:lq_positive} and \ref{thm:lq_linear} respectively. The logical connections between these main results are summarised in Figure \ref{fig:M12}.

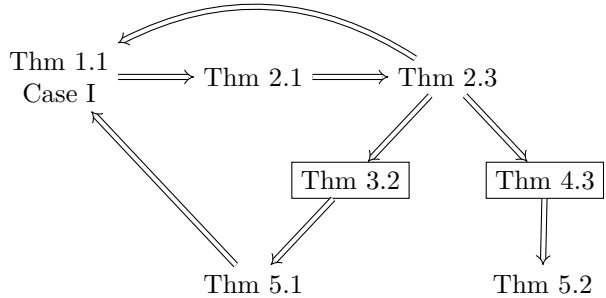
\begin{figure}[h]
\centering
\begin{tikzpicture}
  \node[align=center] (thm1p1) {Thm 1.1\\Case I};
  \node[right=of thm1p1] (thm2p1) {Thm 2.1};
  \node[right=of thm2p1] (thm2p2) {Thm 2.3};
  \node[below=of thm2p1] (dummy1) {};
  \node[below=of dummy1] (thm5p1) {Thm 5.1};
  \node[between=thm2p2 and thm5p1, rectangle, draw] (thm3p3) {Thm 3.2};
  \node[right=of thm3p3, rectangle, draw] (thm4p3) {Thm 4.3};
    \node[right=of thm5p1] (dummy2) {};
  \node[right=of dummy2] (thm5p2) {Thm 5.2};

  \draw[-implies,double equal sign distance] (thm1p1) to (thm2p1);
 \draw[-implies,double equal sign distance] (thm2p1) to (thm2p2);
  \draw[-implies,double equal sign distance, bend right] (thm2p2) to (thm1p1);
  \draw[-implies,double equal sign distance] (thm2p2) to (thm3p3);
  \draw[-implies,double equal sign distance] (thm3p3) to (thm5p1);
  \draw[-implies,double equal sign distance] (thm5p1) to (thm1p1);
  \draw[-implies,double equal sign distance] (thm2p2) to (thm4p3);
  \draw[-implies,double equal sign distance] (thm4p3) to (thm5p2);
\end{tikzpicture}
\caption{Taxonomy of main theorems
}
\label{fig:M12}
\end{figure}

The implications between the main result for positive operators on $p$-convex lattices, Theorem ~\ref{qwm_bb}, and its more basic manifestations Theorems~\ref{answer:qwm_0} and \ref{qwm_1} for $L^r$-spaces, are given in Figure \ref{fig:M11}.

\begin{figure}[h]
\centering
\begin{tikzpicture}
  \node (thm1p3) {Thm 1.3};
  \node[right=of thm1p3] (thm1p4) {Thm 1.4};
  \node[above=of thm1p4, rectangle, draw] (thm3p2) {Thm 3.2};
  \node[right=of thm3p2] (thm3p3) {};
  \node[right=of thm1p4] (thm5p1) {Thm 5.1};

  \draw[-implies,double equal sign distance] (thm1p3) to (thm1p4);
  \draw[-implies,double equal sign distance] (thm3p2) to (thm1p3);
 
  \draw[-implies,double equal sign distance] (thm3p2) to (thm5p1);
  \draw[-implies, double equal sign distance] (thm3p2) to (thm1p4);
\end{tikzpicture}
\caption{Positive operators
}
\label{fig:M11}
\end{figure}
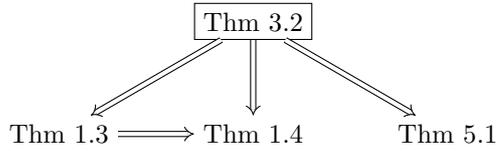

For general linear operators on normed spaces of (non-trivial) Rademacher-type, the corresponding logical implications between the main result, Theorem~\ref{qwm_ba} and the more basic manifestations Theorems~\ref{answer:general_operators}, \ref{lindis_2} and \ref{lin_q} for $L^r$-spaces, are given by Figure \ref{fig:M1}.
\begin{figure}[h]
\centering
\begin{tikzpicture}
  \node (thm1p5) {Thm 1.5};
  \node[right=of thm1p5] (thm1p6) {Thm 1.6};
  \node[right=of thm1p6] (thm1p7) {Thm 1.7};
  \node[above=of thm1p6] (thm4p1) {Thm 4.1};
  \node[above=of thm4p1, rectangle, draw] (thm4p3) {Thm 4.3};
  \node[above=of thm1p7] (thm5p2) {Thm 5.2};

  \draw[-implies,double equal sign distance] (thm4p3) to (thm1p5);
  \draw[-implies,double equal sign distance] (thm4p3) to (thm4p1);
  \draw[-implies,double equal sign distance] (thm4p3) to (thm5p2);
  \draw[-implies,double equal sign distance] (thm1p5) to (thm1p6);
  \draw[-implies,double equal sign distance] (thm1p6) to (thm1p7);
  \draw[-implies,double equal sign distance] (thm5p2) to (thm1p7);
\end{tikzpicture}
\caption{General linear operators
}
\label{fig:M1}
\end{figure}
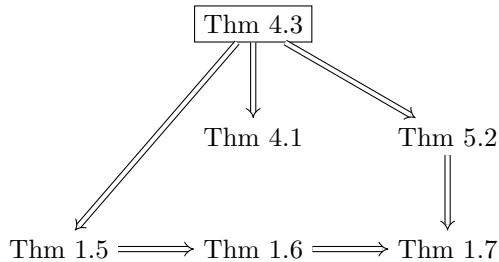
Finally, in an Appendix, we consider the necessity of the conditions we have imposed on the exponents $(p_j)$ in the Basic Question and in Theorems~\ref{answer:qwm_0} and \ref{answer:general_operators}, and we show that they cannot in general be dispensed with. We also show that one cannot avoid the hypothesis of $(p_j)$-convexity in Theorem~\ref{qwm_bb}.

\subsection{Acknowledgements}
T.S.H is supported by the Academy of Finland (through Projects 297929, 314829, and 332740). The authors would like to thank Michael Cowling for bringing reference \cite{FT_P} to their attention, and the referees for their informed and thorough reading of the manuscript and the helpful comments and suggestions which ensued.

\section{Vector-valued disentanglement}\label{sec:vector_valued_disentanglement}
In this section we state and prove two results, both of which are equivalent to the disentanglement result given by Case I of
Theorem~\ref{thmmainbaby}. These will be crucial in the development of both the positive
theory stated in terms of $p$-convexity and of the general linear theory using Rademacher-type. At the end of this section we describe the strategy that we will adopt in order to achieve these aims in the succeeding sections.

\subsection{Functional form}
We first derive an equivalent, arguably more primordial, form of Case I of Theorem~\ref{thmmainbaby}, which makes no reference to saturating positive linear operators, nor to normed lattices, but instead is couched in terms of saturating families of nonnegative measurable functions on a $\sigma$-finite measure space $X$.

\medskip
\noindent
Let $(X, {\rm d} \mu)$ be a $\sigma$-finite measure space. Suppose that for each $1 \leq j \leq d$ we have an indexing set $\mathcal{K}_j$ and a family $\{g_{k_j}\}_{k_j \in \mathcal{K}_j}$ of nonnegative measurable functions on $X$. We assume that, for each $j$, the family  $\{g_{k_j}\}_{k_j \in \mathcal{K}_j}$ {\bf saturates} $X$ 
in the sense that
for every $E \subseteq X$ with $\mu(X) > 0$, there is a subset $E' \subseteq E$ with $\mu (E') > 0$ and a $k_j \in \mathcal{K}_j$ such that $g_{k_j} >0$ on $E'$.

\begin{thm}[Disentanglement of functions]\label{functions}
  With $(X, {\rm d} \mu)$ and $\{g_{k_j}\}_{k_j \in \mathcal{K}_j}$ as above, and $\alpha_j >0$ such that $\sum_{j=1}^d \alpha_j = 1$, assume that
 \begin{equation}\label{prz}
    \int_X \prod_{j=1}^d \left(\sum_{k_j \in \mathcal{K}_j} \beta_{k_j} g_{k_j}\right)^{\alpha_j} {\rm d} \mu
    \leq A \prod_{j=1}^d \left( \sum_{k_j \in \mathcal{K}_j} \beta_{k_j}\right)^{\alpha_j}
 \end{equation}
 for all (finitely-supported) nonnegative $\{\beta_{k_j}\}$. Then there exist nonnegative $\phi_j$ such that
  \begin{equation}\label{prz1}
\prod_{j=1}^d \phi_j(x)^{\alpha_j} \geq 1
  \end{equation}
almost everywhere on $X$, and such that for all $j$,
   \begin{equation}\label{prz2}
    \int_X g_{k_j}(x) \phi_j(x) {\rm d} \mu(x) \leq A
    \end{equation}for all $k_j \in \mathcal{K}_j$.
   \end{thm}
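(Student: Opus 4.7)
My plan is to deduce Theorem~\ref{functions} directly from Case I of Theorem~\ref{thmmainbaby} by encoding each family $\{g_{k_j}\}_{k_j \in \mathcal{K}_j}$ as a positive linear operator on a suitable normed lattice. For each $j$, I would take $\mathcal{Y}_j$ to be the space of finitely-supported real sequences on $\mathcal{K}_j$ equipped with the $\ell^1$-norm $\|\beta\|_{\mathcal{Y}_j} = \sum_{k_j} |\beta_{k_j}|$ and the coordinatewise order (this is clearly a normed lattice) and define $T_j : \mathcal{Y}_j \to \mathcal{M}(X)$ by $T_j \beta = \sum_{k_j} \beta_{k_j} g_{k_j}$. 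Then $T_j$ is linear and positive, and the saturation hypothesis on the family $\{g_{k_j}\}$ transfers verbatim to the saturation of $T_j$: for the standard basis vector $e_{k_j}$ one has $|T_j e_{k_j}| = g_{k_j}$, so saturating witnesses on the function side produce saturating witnesses on the operator side.

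Under this identification, every nonnegative $f_j \in \mathcal{Y}_j$ has the form $\sum \beta_{k_j} e_{k_j}$ with $\|f_j\|_{\mathcal{Y}_j} = \sum \beta_{k_j}$, so the hypothesis \eqref{prz} is exactly the hypothesis \eqref{abc} with $q=1$. Case I of Theorem~\ref{thmmainbaby} then supplies nonnegative measurable $\phi_j$ on $X$ satisfying $\prod_j \phi_j(x)^{\alpha_j} \geq 1$ a.e.\ and
$$\int_X \phi_j(x)\, T_j f_j(x)\, {\rm d}\mu(x) \leq A \|f_j\|_{\mathcal{Y}_j}$$
for all $f_j \in \mathcal{Y}_j$. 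Testing this against $f_j = e_{k_j}$ (which has norm one) yields $\int_X \phi_j g_{k_j}\, {\rm d}\mu \leq A$, which is precisely \eqref{prz2}.

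The argument is thus essentially an act of translation between two equivalent frameworks, and the only step requiring genuine care is verifying that the concrete saturation condition on $\{g_{k_j}\}$ matches the abstract operator-saturation demanded by Theorem~\ref{thmmainbaby}; this is where the basis vectors $e_{k_j}$ play their role. Running the same device in reverse --- starting from a positive $T_j$ on a normed lattice $\mathcal{Y}_j$, letting $\mathcal{K}_j$ be the nonnegative unit ball of $\mathcal{Y}_j$, and setting $g_f = T_j f$ --- recovers Case I of Theorem~\ref{thmmainbaby} from Theorem~\ref{functions}, thereby justifying the claimed equivalence of the two statements.
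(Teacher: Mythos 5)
Your proposal is correct and coincides with the paper's own argument: the paper likewise sets $\mathcal{Y}_j = \ell^1(\mathcal{K}_j)$, defines $T_j(\boldsymbol{\beta}_j) = \sum_{k_j}\beta_{k_j}g_{k_j}$, invokes Case I of Theorem~\ref{thmmainbaby}, and translates the conclusion back, so this is essentially the same proof. Your added remarks on checking saturation via the basis vectors $e_{k_j}$ and on running the device in reverse are sound elaborations but do not change the substance.
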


\begin{proof}
Let $\mathcal{Y}_j$ be the normed lattice $l^1(\mathcal{K}_j)$ with counting measure on $\mathcal{K}_j$, whose members  
are denoted by ${\boldsymbol{\beta}}_j =  \{\beta_{k_j}\}_{k_j \in \mathcal{K}_j}$. (There is no requirement on $\mathcal{K}_j$ to be countable.) Define $T_j:  l^1(\mathcal{K}_j) \to \mathcal{M}(X)$ by
  $$ T_j ({\boldsymbol{\beta}}_j) := \sum_{k_j \in \mathcal{K}_j} \beta_{k_j} g_{k_j}.$$
 Note that $T_j$ are saturating positive linear operators. Then \eqref{prz} becomes 
$$ \int_X \prod_{j=1}^d \left(T_j {\boldsymbol{\beta}}_j \right)^{\alpha_j} {\rm d} \mu
  \leq A \prod_{j=1}^d \|{\boldsymbol{\beta}}\|_{\mathcal{Y}_j}^{\alpha_j}.$$
By Case I of Theorem~\ref{thmmainbaby}, there exist $\phi_j$ such that
\eqref{prz1} holds and such that 
$$ \int_X (T_j{\boldsymbol{\beta}}_j)\phi_j {\rm d} \mu \leq A \|{\boldsymbol{\beta}}_j\|_{\mathcal{Y}_j},$$
which is the same as
  \begin{equation*}
    \int_X \left( \sum_{k_j \in \mathcal{K}_j} \beta_{k_j} g_{k_j} \right) \phi_j {\rm d} \mu \leq A  \sum_{k_j \in \mathcal{K}_j} \beta_{k_j},
    \end{equation*}
  or, equivalently, \eqref{prz2}.
    \end{proof}
  
  \medskip
  \noindent
Theorem \ref{functions} can be equivalently rephrased in terms of convex families of functions as follows:
\begin{thm}[Disentanglement of convex families of functions]\label{functions2}
Let $(X, {\rm d}\mu)$ be a $\sigma$-finite measure space. Suppose that $\sum_{j=1}^d \alpha_j = 1$ and that each $\alpha_j > 0$. For each $j \in \{1, \dots , d\}$ let $\mathcal{G}_j$ be a saturating convex set of nonnegative measurable functions. Assume that 
     \begin{equation*}
    \int_X \prod_{j=1}^d g_{j}(x)^{\alpha_j} {\rm d} \mu(x)
    \leq A \quad\text{for all $g_j\in  \mathcal{G}_j$.}
\end{equation*}
 Then there exist nonnegative $\phi_j$ such that
  \begin{equation*}
\prod_{j=1}^d \phi_j(x)^{\alpha_j} \geq 1
  \end{equation*}
almost everywhere on $X$, and such that for all $j$,
   \begin{equation*}
    \int_X g_{j}(x) \phi_j(x) {\rm d} \mu(x) \leq A\quad\text{for all $g_j\in \mathcal{G}_j$.} 
    \end{equation*}
\end{thm}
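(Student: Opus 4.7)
The plan is to deduce Theorem~\ref{functions2} directly from Theorem~\ref{functions}; the reverse implication, which justifies the word ``equivalently'' in the paper's statement, follows by applying Theorem~\ref{functions2} to the convex hulls $\mathcal{G}_j := \mathrm{conv}\{g_{k_j}\}_{k_j \in \mathcal{K}_j}$ (saturation is preserved under enlarging the family, and the hypothesis of Theorem~\ref{functions2} for this $\mathcal{G}_j$ is just the normalised version of \eqref{prz}), so I will concentrate on the forward direction.

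For the forward direction I would, for each $j$, take the indexing set $\mathcal{K}_j := \mathcal{G}_j$ and consider the family $\{g\}_{g \in \mathcal{G}_j}$ itself. Saturation of $\mathcal{G}_j$ as a family of nonnegative measurable functions is precisely the saturation hypothesis of Theorem~\ref{functions}, so the only substantive step is to verify the hypothesis \eqref{prz}.

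To that end, let $\{\beta_{g}\}_{g \in \mathcal{G}_j}$ be finitely-supported nonnegative scalars and put $S_j := \sum_{g} \beta_{g}$. If $S_j = 0$ for some $j$ then $\sum_g \beta_g\, g \equiv 0$ and \eqref{prz} is trivial; otherwise, because $\mathcal{G}_j$ is convex,
\[
\tilde g_j := S_j^{-1} \sum_g \beta_{g}\, g
\]
is a convex combination of finitely many elements of $\mathcal{G}_j$, hence lies in $\mathcal{G}_j$. Applying the hypothesis of Theorem~\ref{functions2} to the tuple $(\tilde g_1, \dots, \tilde g_d)$ and then multiplying through by $\prod_j S_j^{\alpha_j}$ gives
\[
\int_X \prod_{j=1}^d \Bigl(\sum_g \beta_{g}\, g(x)\Bigr)^{\alpha_j} d\mu(x) \leq A \prod_{j=1}^d S_j^{\alpha_j},
\]
which is exactly \eqref{prz}. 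Theorem~\ref{functions} then produces nonnegative $\phi_j$ with $\prod_j \phi_j^{\alpha_j} \geq 1$ a.e. and $\int_X g\,\phi_j\, d\mu \leq A$ for every $g \in \mathcal{G}_j$, which is the desired conclusion.

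I do not anticipate any genuine obstacle; the argument is an exercise in bookkeeping in which convexity of $\mathcal{G}_j$ is used in exactly one place, namely to absorb an arbitrary finite nonnegative linear combination of elements of $\mathcal{G}_j$ into a single element of $\mathcal{G}_j$ up to the scalar factor $S_j$. This is the only new input required beyond Theorem~\ref{functions}.
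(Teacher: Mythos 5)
Your proposal is correct and follows essentially the same route as the paper: the paper's own (terse) proof observes that, after normalising by $S_j$ and using homogeneity, hypothesis \eqref{prz} of Theorem~\ref{functions} is precisely the bound over the convex hull $\conv\mathcal{G}_j$, which for a convex $\mathcal{G}_j$ is $\mathcal{G}_j$ itself. Your write-up merely spells out the forward direction in more detail, using convexity in exactly the same single spot to absorb the normalised finite nonnegative combination into a single element of $\mathcal{G}_j$.
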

\begin{proof}The equivalence of Theorem \ref{functions} and Theorem \ref{functions2} is clear from the following observation: writing  $\gamma_{k_j}:= \frac{\beta_{k_j}}{\sum_{k_j\in \mathcal{K}_j} \beta_{k_j}}$ and using homogeneity,
assumption \eqref{prz} of Theorem \ref{functions} can be rephrased as

 \begin{equation*}
    \int_X \prod_{j=1}^d g_{j}^{\alpha_j} {\rm d} \mu
    \leq A \quad\text{for all $g_j\in \conv \mathcal{G}_j$}
 \end{equation*}
 where $\conv \mathcal{G}_j$ is the convex hull of $\mathcal{G}_j$.

\end{proof}

\subsection{Vector-valued form}
The viewpoint of Theorem~\ref{functions} lends itself more readily to applications which are far from obvious from the viewpoint of the formulation of Theorem~\ref{thmmainbaby}. For some of these applications we shall need to work with quasi-normed spaces rather than normed spaces $\mathcal{Y}_j$. We recall that a quasi-normed space $\mathcal{Y}$ is one in which we have the quasi-triangle inequality $\|x + y \|_\mathcal{Y} \leq K (\|x\|_{\mathcal{Y}} + \|y\|_{\mathcal{Y}})$ for some $K \geq 1$ in place of the usual triangle inequality.\footnote{We shall not use the quasi-triangle inequality, and so the constant $K$ will not appear explicitly in our analysis. In fact, every quasi-normed space $\mathcal{Y}$ is $r$-normable and hence has Rademacher-type $r$ for some $0<r\leq 1$; see for example \cite{Kalton_R}. The Rademacher-type constant $R_r(\cy)$ will instead feature. 
}

\medskip
\noindent
For example, we have:

\begin{thm}\label{vector} 
  Suppose that $(X, {\rm d} \mu)$ is a $\sigma$-finite measure space, $\mathcal{Y}_j$ are quasi-normed spaces and $0 <p_j< \infty$. Suppose $T_j: \mathcal{Y}_j \to \mathcal{M}(X)$ are homogeneous of degree $1$ -- that is, $T_j(\lambda f_j) = \lambda T_jf_j$ for all $f_j \in \mathcal{Y}_j$ and all scalars $\lambda$. Assume that for all $j$, the functions $\{|T_j f_j| \, : \, f_j \in \mathcal{Y}_j\}$ saturate $X$. Let $\theta_j >0$ satisfy $\sum_{j=1}^d \theta_j = 1$ and suppose  that we have the $(p_j)$-vector-valued inequality
\begin{equation}\label{vv}
  \int_X \prod_{j=1}^d \left(\sum_{k=1}^N |T_j f_{jk}(x)|^{p_j}\right)^{\theta_j} {\rm d}\mu(x)
  \leq A \prod_{j=1}^d \left(\sum_{k=1}^N \|f_{jk}\|_{\mathcal{Y}_j}^{p_j}\right)^{\theta_j}
  \end{equation}
uniformly in $N$. Then there exist nonnegative $\phi_j$ such that
$$ \prod_{j=1}^d \phi_j(x)^{\theta_j} \geq 1$$
almost everywhere on $X$ and such that for each $j$,
$$ \left( \int_X |T_j f_j(x)|^{p_j} \phi_j(x) {\rm d} \mu(x)\right)^{1/p_j} \leq A^{1/p_j} \|f_j\|_{\mathcal{Y}_j}$$
for all $f_j \in \mathcal{Y}_j$. 
\end{thm}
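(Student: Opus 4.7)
The plan is to deduce Theorem~\ref{vector} from Theorem~\ref{functions} by a direct encoding: for each $j$ we take the indexing set to be the unit sphere of $\mathcal{Y}_j$ and the associated family of nonnegative measurable functions on $X$ to be $\{|T_jf_j|^{p_j}\}$. Homogeneity of $T_j$ will then convert the vector-valued hypothesis~\eqref{vv} into the purely additive hypothesis~\eqref{prz} of Theorem~\ref{functions}, after which Theorem~\ref{functions} produces the desired $\phi_j$ and a final rescaling restores $p_j$-homogeneity.

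Concretely, set $\mathcal{K}_j := \{f_j\in \mathcal{Y}_j : \|f_j\|_{\mathcal{Y}_j}=1\}$ and $g_{f_j} := |T_jf_j|^{p_j}$ for $f_j\in\mathcal{K}_j$. The saturation of $\{|T_jf_j|:f_j\in\mathcal{Y}_j\}$ is equivalent, by scaling, to the saturation of $\{g_{f_j}\}_{f_j\in\mathcal{K}_j}$ in the sense of Theorem~\ref{functions}. Given a finitely-supported family of nonnegative coefficients $\{\beta_{f_j}\}_{f_j\in\mathcal{K}_j}$, write its support as $\{f_{j1},\dots,f_{jN}\}$ and set $h_{jk}:=\beta_{f_{jk}}^{1/p_j}f_{jk}$. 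Homogeneity of $T_j$ of degree $1$ yields
\[
\sum_{f_j\in\mathcal{K}_j}\beta_{f_j}\,g_{f_j}=\sum_{k=1}^N \beta_{f_{jk}}|T_jf_{jk}|^{p_j}=\sum_{k=1}^N |T_jh_{jk}|^{p_j},
\]
while $\|h_{jk}\|_{\mathcal{Y}_j}^{p_j}=\beta_{f_{jk}}$, so that $\sum_k\|h_{jk}\|_{\mathcal{Y}_j}^{p_j}=\sum_{f_j}\beta_{f_j}$. Substituting into \eqref{vv} with the exponents $\theta_j$ playing the role of the $\alpha_j$ in Theorem~\ref{functions} verifies \eqref{prz} with the same constant $A$.

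Applying Theorem~\ref{functions} now produces nonnegative measurable $\phi_j$ with $\prod_j\phi_j^{\theta_j}\geq 1$ a.e.\ and
\[
\int_X |T_jf_j|^{p_j}\phi_j\,\mathrm{d}\mu\leq A\qquad\text{for every } f_j\in\mathcal{K}_j.
\]
Finally, an arbitrary $f_j\in\mathcal{Y}_j$ (nonzero; the case $f_j=0$ is trivial) can be written as $\|f_j\|_{\mathcal{Y}_j}\cdot\tilde f_j$ with $\tilde f_j\in\mathcal{K}_j$, and homogeneity of $T_j$ gives $\int_X|T_jf_j|^{p_j}\phi_j\,\mathrm{d}\mu\leq A\,\|f_j\|_{\mathcal{Y}_j}^{p_j}$, which is the desired conclusion after taking $p_j$-th roots.

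The only potential obstacle is the bookkeeping that translates the vector-valued estimate into the additive form required by Theorem~\ref{functions}; once one spots that $\beta|T_jf_j|^{p_j}=|T_j(\beta^{1/p_j}f_j)|^{p_j}$, everything fits together, and the quasi-normed (rather than normed) character of $\mathcal{Y}_j$ plays no role because Theorem~\ref{functions} makes no use of a triangle inequality on the index set.
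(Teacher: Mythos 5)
Your argument is correct and is essentially the same as the paper's: the paper likewise reduces to Theorem~\ref{functions} by considering the families $\bigl\{(|T_jf_j|/\|f_j\|_{\mathcal{Y}_j})^{p_j}:f_j\in\mathcal{Y}_j\setminus\{0\}\bigr\}$, which is the same parametrisation you use up to restricting to the unit sphere. You merely make explicit the bookkeeping identity $\beta\,|T_jf_j|^{p_j}=|T_j(\beta^{1/p_j}f_j)|^{p_j}$ that the paper leaves implicit in the phrase ``translates into \eqref{prz}''.
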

Notice that we do not need $\mathcal{Y}_j$ to have a lattice structure, nor do we need linearity or positivity of $T_j$. 

\begin{proof}
  Consider the saturating families
  $$\left\{ \left(\frac{|T_j f_j(x)|}{\|f_j\|_{\mathcal{Y}_j}}\right)^{p_j} \, :
  \, f_j \in \mathcal{Y}_j \setminus\{0\} \right\}$$
  of nonnegative functions defined on $X$. Assumption \eqref{vv} translates into \eqref{prz} with
  $\alpha_j = \theta_j$, with the same constant $A$. So by Theorem~\ref{functions} there are nonnegative $\phi_j$ such that
  \eqref{prz1} and \eqref{prz2} hold. And  \eqref{prz2} translates into $ \left( \int_X |T_j f_j(x)|^{p_j} \phi_j(x) {\rm d} \mu(x)\right)^{1/p_j} \leq A^{1/p_j} \|f_j\|_{\mathcal{Y}_j}$
for all $f_j \in \mathcal{Y}_j$.
 \end{proof}

To complete the assertion that Theorem~\ref{thmmainbaby} (Case I), Theorem~\ref{functions} and Theorem~\ref{vector} are all equivalent, we note that Theorem~\ref{vector} implies Case I of Theorem~\ref{thmmainbaby}.
Indeed, the scalar-valued inequality (the hypothesis of Theorem~\ref{thmmainbaby}) readily upgrades to the vector-valued inequality (the hypothesis of Theorem \ref{vector} with $p_j = 1$ for all $j$) via positivity, as follows: we have
\begin{equation*}
\begin{split}
   & \int_X \prod_{j=1}^d \left(\sum_{k} |T_j f_{jk}(x)|\right)^{\theta_j} {\rm d}\mu(x) \leq \int_X \prod_{j=1}^d \left|T_j \left( \sum_k |f_{jk}| \right)(x) \right|^{\theta_j} {\rm d}\mu(x) \\  
&  \leq A \prod_{j=1}^d \norm{\sum_k |f_{jk}|}_{\cy_j}^{\theta_j}\leq A\prod_{j=1}^d \left(\sum_{k} \|f_{jk}\|_{\mathcal{Y}_j}\right)^{\theta_j}.
\end{split}
\end{equation*}
(Note that the use of the triangle inequality for $\mathcal{Y}_j$ here is legitimate since in the implication under consideration the spaces $\mathcal{Y}_j$ are indeed normed spaces.) Summarising, Case I of Theorem~\ref{thmmainbaby}, Theorem~\ref{functions} and Theorem~\ref{vector} are all equivalent.

\medskip
\noindent
The reader will readily verify using H\"older's inequality that the converse statements to Theorem~\ref{functions} and Theorem~\ref{vector} also hold.

\subsection{Vector-valued approach to disentanglement}\label{strat}
We now give a preview of how we shall employ Theorem~\ref{vector} to establish the main disentanglement theorems of the following sections. 
Indeed, thanks to Theorem~\ref{vector} (and its easy converse), given weights $(\theta_j)$ with $\sum_{j=1}^d\theta_j = 1$, exponents $(p_j)$ with $p_j > 0$, a measure space $(X,\mu)$ and linear operators $T_j:\cy_j\to \cm(X)$ defined on quasi-normed spaces $\mathcal{Y}_j$, the following two statements are equivalent:
\medskip
\begin{itemize}
    \item (Disentanglement of $p_j$th powers). The norm inequality
\begin{equation*}
  \int_X \prod_{j=1}^d |T_j f_{j}(x)|^{p_j\theta_j} {\rm d}\mu(x)
  \leq A \prod_{j=1}^d \|f_{j}\|_{\mathcal{Y}_j}^{p_j\theta_j}
  \end{equation*}
  implies that there exist nonnegative $\phi_j$ such that
$ \prod_{j=1}^d \phi_j(x)^{\theta_j} \geq 1$
almost everywhere on $X$ and such that for each $j$,
$$ \left( \int_X |T_j f_j(x)|^{p_j} \phi_j(x) {\rm d} \mu(x)\right)^{1/p_j} \leq \tilde{A}^{1/p_j} \|f_j\|_{\mathcal{Y}_j}.$$

\item (Scalar-valued implies vector-valued inequality). The scalar-valued inequality
\begin{equation*}
  \int_X \prod_{j=1}^d |T_j f_{j}(x)|^{p_j\theta_j} {\rm d}\mu(x)
  \leq A \prod_{j=1}^d \|f_{j}\|_{\mathcal{Y}_j}^{p_j\theta_j}
  \end{equation*}
  implies the vector-valued inequality
\begin{equation*}
  \int_X \prod_{j=1}^d \left(\sum_{k} |T_j f_{jk}(x)|^{p_j}\right)^{\theta_j} {\rm d}\mu(x)
  \leq \tilde{A} \prod_{j=1}^d \left(\sum_{k} \|f_{jk}\|_{\mathcal{Y}_j}^{p_j}\right)^{\theta_j}.
  \end{equation*}
\end{itemize}

 In the following sections, we prove disentanglement theorems via this {\it vector-valued approach}: subject to geometric properties of the spaces $\mathcal{Y}_j$ ($p$-convexity for positive linear operators, Rademacher-type for general linear operators), we deduce the vector-valued inequality from the corresponding scalar-valued inequality, and thereby establish our disentanglement theorems via the equivalence we have just set out.
\section{Positive operators and $p$-convexity}\label{sec:positive_operators_convexity}
In this section we state and prove a more general form of  Theorem~\ref{answer:qwm_0} applying to normed lattices which enjoy $p$-convexity properties.

\begin{definition}[$p$-convexity]
  Let $1 \leq p< \infty$. A normed lattice $\mathcal{Y}$ is $p$-convex if for all finite sequences
  $(f_j)$ in $\mathcal{Y}$ we have
$$ \left\| \left(\sum_j |f_j|^p\right)^{1/p} \right\|_{\mathcal{Y}} \leq C_p(\mathcal{Y}) \left(\sum_j \left\|f_j \right\|_{\mathcal{Y}}^p\right)^{1/p}.$$
  The least such constant is denoted by $C_p(\mathcal{Y})$ and is called the $p$-convexity constant of $\mathcal{Y}$. Clearly $C_p(\mathcal{Y}) \geq 1$.
\end{definition}
Notice that $L^p$ is $p$-convex with $p$-convexity constant equal to $1$, and that every
normed lattice is $1$-convex with $1$-convexity constant equal to $1$. If a lattice $\cy$ is $p$-convex for some $1\leq p<\infty$, then it is $\tilde{p}$-convex  for all $1\leq \tilde{p}\leq p$, see for example \cite{lindenstrauss}.

\medskip
\noindent
Using the fact that $L^r$ is $p$-convex for $1 \leq p \leq r$, with $p$-convexity constant $1$, Theorem~\ref{answer:qwm_0} follows directly from the next, more general result, which is the principal result of this section. This answers our Basic Question for positive linear operators defined on $p$-convex lattices upon taking $\gamma_j = p_j \theta_j$.

\begin{thm}[Disentanglement theorem for positive operators on $p$-convex lattices]\label{qwm_bb}
Suppose that $X$ is a $\sigma$-finite measure space and that $\mathcal{Y}_j$, for $j=1,\dots,d$ are $p_j$-convex normed lattices for some $1 \leq p_j < \infty$. Suppose that the linear operators $T_j: \mathcal{Y}_j \to \mathcal{M}(X)$ are positive,
and that each $T_j$ saturates $X$. Suppose that $\theta_j > 0$ and that $\sum_{j=1}^d \theta_j = 1$.
Finally, suppose that
\begin{equation}\label{abc_1abcd}
\int_X \prod_{j=1}^d (T_jf_j)(x)^{p_j \theta_j }{\rm d} \mu(x)
\leq B \prod_{j=1}^d \Big\|f_j\Big\|_{\mathcal{Y}_j}^{p_j\theta_j}
\end{equation}
for all nonnegative $f_j$ in $\mathcal{Y}_j$, $1 \leq j \leq d$.

\medskip
Then there exist nonnegative measurable functions 
$\phi_j$ on $X$ such that
\begin{equation}\label{factorisebaby_upgrade_aa}
 \prod_{j=1}^d\phi_j(x)^{\theta_j} \geq 1
  \end{equation}
almost everywhere on $X$ and such that for each $j$, 
\begin{equation}\label{controlbaby_upgrade_a}
  \left(\int_X |T_jf_j(x)|^{p_j} \phi_j(x) {\rm d}\mu(x)\right)^{1/p_j}
  \leq B^{1/p_j}C_{p_j}(\mathcal{Y}_j)\|f_j\|_{\mathcal{Y}_j}
\end{equation}
for all $f_j \in \mathcal{Y}_j$.
\end{thm}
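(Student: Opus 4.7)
The plan is to deduce Theorem~\ref{qwm_bb} from the vector-valued disentanglement result Theorem~\ref{vector}, following the scheme outlined in Section~\ref{strat}. Concretely, since each $T_j$ saturates $X$ (so the families $\{|T_j f_j| : f_j \in \mathcal{Y}_j\}$ saturate $X$ in the sense required by Theorem~\ref{vector}), it suffices to upgrade the scalar-valued hypothesis \eqref{abc_1abcd} to the $(p_j)$-vector-valued inequality
\begin{equation*}
\int_X \prod_{j=1}^d \left(\sum_k |T_j f_{jk}(x)|^{p_j}\right)^{\theta_j}\!{\rm d}\mu(x) \leq A \prod_{j=1}^d \left(\sum_k \|f_{jk}\|_{\mathcal{Y}_j}^{p_j}\right)^{\theta_j}
\end{equation*}
with a constant $A$ controlled by $B$ and the convexity constants $C_{p_j}(\mathcal{Y}_j)$.

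The central ingredient -- and the only essential use of positivity -- is the pointwise ``Minkowski for positive operators'' inequality
\begin{equation*}
\left(\sum_k |T_j f_{jk}(x)|^{p_j}\right)^{1/p_j} \leq T_j\!\left(\left(\sum_k |f_{jk}|^{p_j}\right)^{1/p_j}\right)\!(x), \qquad p_j \geq 1,
\end{equation*}
valid almost everywhere on $X$. I would derive this by combining $|T_j h| \leq T_j|h|$ (which holds whenever $T_j$ is positive) with the duality representation $(\sum_k a_k^{p_j})^{1/p_j} = \sup\{\sum_k a_k b_k : b_k \geq 0,\; \sum_k b_k^{p_j'}\leq 1\}$ for nonnegative scalars. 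Pulling the supremum through $T_j$ using linearity and positivity, and bounding the resulting inner sum by $(\sum_k |f_{jk}|^{p_j})^{1/p_j}$ via H\"older, gives the claim.

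Granted this pointwise inequality, the vector-valued upgrade is immediate. Set $F_j := (\sum_k |f_{jk}|^{p_j})^{1/p_j} \in \mathcal{Y}_j$. Raising to the $p_j\theta_j$-th power, taking the product over $j$, integrating over $X$, and applying the scalar hypothesis \eqref{abc_1abcd} to the $F_j$ yields
\begin{equation*}
\int_X \prod_{j=1}^d \left(\sum_k |T_j f_{jk}|^{p_j}\right)^{\theta_j}\!{\rm d}\mu \;\leq\; \int_X \prod_{j=1}^d (T_j F_j)^{p_j\theta_j}\,{\rm d}\mu \;\leq\; B \prod_{j=1}^d \|F_j\|_{\mathcal{Y}_j}^{p_j\theta_j}.
\end{equation*}
The $p_j$-convexity of $\mathcal{Y}_j$ then yields $\|F_j\|_{\mathcal{Y}_j} \leq C_{p_j}(\mathcal{Y}_j)(\sum_k \|f_{jk}\|_{\mathcal{Y}_j}^{p_j})^{1/p_j}$, producing the required vector-valued inequality with an explicit constant of the form $B\prod_i C_{p_i}(\mathcal{Y}_i)^{p_i\theta_i}$. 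Invoking Theorem~\ref{vector} then delivers \eqref{factorisebaby_upgrade_aa} together with a bound of the shape \eqref{controlbaby_upgrade_a}.

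I do not expect a serious obstacle. The argument is short once the two pieces -- Minkowski for positive operators and $p_j$-convexity of $\mathcal{Y}_j$ -- are in place, and it conforms exactly to the scalar-$\Rightarrow$-vector template advocated in Section~\ref{strat}. The one conceptual point worth highlighting is that it is precisely this pointwise Minkowski step which fails for non-positive $T_j$; in the general linear setting of Section~\ref{sec:general_operators_type}, it will have to be replaced by a probabilistic surrogate based on Khintchine's inequality and Rademacher-type.
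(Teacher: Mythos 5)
Your proposal is correct and follows essentially the same route as the paper's proof: the pointwise Minkowski-type inequality $(\sum_k |T_j f_{jk}|^{p_j})^{1/p_j}\leq T_j\bigl[(\sum_k|f_{jk}|^{p_j})^{1/p_j}\bigr]$, established via $\ell^{p_j}$-duality, positivity and H\"older, is exactly the content of the paper's Lemma~\ref{scalar_to_vector}, and the subsequent use of $p_j$-convexity to control $\|F_j\|_{\mathcal{Y}_j}$ followed by Theorem~\ref{vector} is the same. The only point you leave implicit is the small rescaling of each $T_j$ by $C_{p_j}(\mathcal{Y}_j)$ needed to pass from the vector-valued constant $B\prod_i C_{p_i}(\mathcal{Y}_i)^{p_i\theta_i}$ to the stated bound $B^{1/p_j}C_{p_j}(\mathcal{Y}_j)$ per component, which the paper also disposes of with a brief remark about ``playing with homogeneities.''
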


\medskip
\noindent
\begin{remark}
The necessity of the geometric assumption that each lattice $\cy_j$ is $p_j$-convex is addressed in 
the Appendix -- see Proposition~\ref{p-convex_nec}.
\end{remark}

\medskip
\noindent
We establish Theorem~\ref{qwm_bb} using the strategy described above in Section~\ref{strat}. Indeed, by the discussion there, and some playing with homogeneities, it suffices to show that under the assumptions of the theorem, the scalar-valued inequality
\begin{equation}\label{sv_1}
  \int_X \prod_{j=1}^d |T_j f_{j}(x)|^{p_j \theta_j} {\rm d}\mu(x)
  \leq B \prod_{j=1}^d \|f_{j}\|_{\mathcal{Y}_j}^{p_j \theta_j}
  \end{equation}
implies the $(p_j)$-vector-valued inequality
\begin{equation}\label{vv_1}
  \int_X \prod_{j=1}^d \left(\sum_{k=1}^N |T_j f_{jk}(x)|^{p_j}\right)^{\theta_j} {\rm d}\mu(x)
  \leq B \prod_{j=1}^d  C_{p_j}(\mathcal{Y}_j)^{p_j \theta_j} \prod_{j=1}^d \left(\sum_{k=1}^N \|f_{jk}\|_{\mathcal{Y}_j}^{p_j}\right)^{\theta_j},
  \end{equation}
and this is exactly what we do in the next lemma:

\begin{lem}[Scalar-valued to vector-valued]\label{scalar_to_vector}
  Suppose that $T_j : \mathcal{Y}_j \to \mathcal{M}(X)$ are positive linear operators and that $\mathcal{Y}_j$ are $p_j$-convex normed lattices for some $p_j \geq 1$. Then \eqref{sv_1} implies \eqref{vv_1}.
    \end{lem}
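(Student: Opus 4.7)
The plan is to prove the lemma via a direct two-step argument combining a pointwise vector-valued lattice inequality for positive operators with the $p_j$-convexity of each $\mathcal{Y}_j$. The natural aggregate $F_j := \left(\sum_{k=1}^N |f_{jk}|^{p_j}\right)^{1/p_j}$, defined in the normed lattice $\mathcal{Y}_j$ via the Krivine functional calculus, will be the bridge between the vector-valued quantities appearing in \eqref{vv_1} and the scalar-valued hypothesis \eqref{sv_1}.

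First I would establish the pointwise bound
\begin{equation*}
\left(\sum_{k=1}^N |T_j f_{jk}(x)|^{p_j}\right)^{1/p_j} \leq T_j F_j(x) \quad \text{for a.e. } x \in X.
\end{equation*}
The argument is by duality: for any scalars $(a_k)_{k=1}^N$ with $\sum_k |a_k|^{p_j'} = 1$, H\"older's inequality in $\ell^{p_j}$, applied pointwise inside the lattice $\mathcal{Y}_j$, gives $\left|\sum_k a_k f_{jk}\right| \leq F_j$ in $\mathcal{Y}_j$. Since $T_j$ is positive it preserves this order, so $\left|T_j\left(\sum_k a_k f_{jk}\right)(x)\right| \leq T_j F_j(x)$ a.e.; linearity of $T_j$ then gives $\sum_k a_k T_j f_{jk}(x) \leq T_j F_j(x)$. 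Taking the supremum over admissible $(a_k)$ recovers $\ell^{p_j}$ on the left. This step crucially uses both positivity and linearity of $T_j$.

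Next I would raise this pointwise bound to the power $p_j \theta_j$, take the product over $j$, and integrate in $x$. Applying the scalar-valued hypothesis \eqref{sv_1} to the nonnegative aggregates $F_j \in \mathcal{Y}_j$ yields
\begin{equation*}
\int_X \prod_{j=1}^d \left(\sum_k |T_j f_{jk}(x)|^{p_j}\right)^{\theta_j} {\rm d}\mu(x) \leq B \prod_{j=1}^d \|F_j\|_{\mathcal{Y}_j}^{p_j \theta_j}.
\end{equation*}
Finally, the definition of the $p_j$-convexity constant provides $\|F_j\|_{\mathcal{Y}_j} \leq C_{p_j}(\mathcal{Y}_j) \left(\sum_k \|f_{jk}\|_{\mathcal{Y}_j}^{p_j}\right)^{1/p_j}$, and substituting this produces exactly \eqref{vv_1} with the claimed constant $B \prod_j C_{p_j}(\mathcal{Y}_j)^{p_j \theta_j}$.

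I do not anticipate a genuine obstacle: each of the three ingredients (pointwise $\ell^{p_j}$-duality for positive linear operators, the scalar-valued hypothesis applied to $F_j$, and the $p_j$-convexity inequality) is classical. The only substantive insight is the idea of introducing the aggregate $F_j$, after which the three steps dovetail cleanly. If any subtlety arises, it is the measure-theoretic one of ensuring that $F_j$ is a legitimate element of $\mathcal{Y}_j$ with a well-defined image $T_j F_j \in \mathcal{M}(X)$; since the index $k$ runs over a finite set this is immediate from the lattice operations and the definition of the Krivine calculus.
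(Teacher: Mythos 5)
Your proposal is correct and follows essentially the same route as the paper: introducing the aggregate $F_j = \left(\sum_k |f_{jk}|^{p_j}\right)^{1/p_j}$, establishing the pointwise domination $\left(\sum_k |T_j f_{jk}(x)|^{p_j}\right)^{1/p_j} \leq T_j F_j(x)$ via $\ell^{p_j}$-duality together with linearity and positivity of $T_j$, applying the scalar hypothesis \eqref{sv_1} to the $F_j$, and finishing with $p_j$-convexity. The only cosmetic differences are that the paper normalizes $\sum_k \|f_{jk}\|_{\mathcal{Y}_j}^{p_j} = 1$ at the outset and does not explicitly invoke the Krivine calculus; the argument is otherwise identical.
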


\medskip
\noindent
Note that when each $\mathcal{Y}_j$ is an $L^{r_j}$-space for $r_j\geq  p_j$, the constant in \eqref{vv_1} is precisely $B$ since then we have $C_{p_j}(L^{r_j})=1$.

\begin{proof}
By homogeneity, we may assume that for each $j$,  $\left(\sum_{k=1}^N \|f_{jk}\|_{\mathcal{Y}_j}^{p_j}\right)^{1/p_j} = 1$.

We are seeking a bound for the left-hand side of 
\eqref{vv_1}, and start by linearising
 the expression $\left(\sum_{k=1}^N |T_j f_{jk}(x)|^{p_j}\right)^{1/p_j}$ in a pointwise manner. We do this by using classical duality for $l^p$ spaces, together with positivity. Indeed, we have, with the $\sup$ taken over all $(\lambda_k)$ with $\sum_k\lambda_k^{p_j'} = 1$,
  $$ \left(\sum_{k=1}^N |T_j f_{jk}(x)|^{p_j}\right)^{1/p_j} = \sup_{(\lambda_k)} |\sum_{k=1}^N \lambda_{k}T_j f_{jk}(x)| = \sup_{(\lambda_k)} |T_j(\sum_{k=1}^N \lambda_{k} f_{jk})(x)| $$
  $$\leq \sup_{(\lambda_k)} T_j \left[\left(\sum_{k=1}^N\lambda_k^{p_j'}\right)^{1/p_j'} \left(\sum_{k=1}^N |f_{jk}|^{p_j}\right)^{1/p_j}\right](x)
= T_j\left[\left(\sum_{k=1}^N |f_{jk}|^{p_j}\right)^{1/p_j}\right](x) := T_jF_j(x).$$
 
  \medskip
  \noindent
Now we are in a position to apply \eqref{sv_1}, and we thus have
  $$ \int_X \prod_{j=1}^d \left(\sum_{k=1}^N |T_j f_{jk}(x)|^{p_j}\right)^{\theta_j} {\rm d}\mu(x)  \leq  \int_X \prod_{j=1}^d T_j F_j(x)^{p_j \theta_j}
{\rm d}\mu(x)\leq B \prod_{j=1}^d \left\| F_{j}\right\|_{\mathcal{Y}_j}^{p_j \theta_j}.$$

\medskip
\noindent
We use the definition of $p$-convexity to obtain
$$  \|F_j\|_{\mathcal{Y}_j} =
\left\|\left[\left(\sum_{k=1}^N |f_{jk}|^{p_j}\right)^{1/p_j}\right]\right\|_{\mathcal{Y}_j}
\leq C_{p_j}(\mathcal{Y}_j) \left(\sum_{k=1}^N \|f_{jk}\|_{\mathcal{Y}_j}^{p_j}\right)^{1/p_j} = C_{p_j}(\mathcal{Y}_j).$$

\medskip
\noindent
Combining these inequalities establishes the lemma.

  \end{proof}
\medskip
\noindent
Notice that we really use linearity of $T_j$ in this argument; sublinearity does not suffice for it to work.

\medskip
\noindent
{\it {Remark.}} The essence of the vector-valued approach to disentanglement lies in upgrading a scalar-valued estimate into the corresponding vector-valued estimate. From the viewpoint of disentanglement of convex families of functions,
this amounts to upgrading the estimate
\begin{equation*}
  \int_X \prod_{j=1}^d |g_{j}(x)|^{\theta_j} {\rm d}\mu(x)
  \leq A \quad\text{for all $g_j\in \mathcal{G}_j$} 
  \end{equation*}
 from the family $$
\mathcal{G}_j:=\mathcal{G}(T_j,\mathcal{Y}_j,p_j):=\left\{ \frac{\abs{T_jf_j}^{p_j}}{\norm{f_j}_{\mathcal{Y}_j}^{p_j}} \right\}
$$ to its convex hull $\conv \mathcal{G}_j$.
Now, Lemma
\ref{scalar_to_vector} loosely states that, under its assumptions, the family $\mathcal{G}_j$ is `essentially convex'. Indeed, let  $\mathcal{F}_1$ and $\mathcal{F}_2$ be sets of non-negative measurable functions and $C>0$ be a constant. Let us write $\mathcal{F}_1\leq C \mathcal{F}_2$ if for each $f_1\in \mathcal{F}_1$ there is $f_2\in \mathcal{F}_2$ such that $f_1\leq C f_2$. Assume that $T:\cy \to \cm(X)$ is a positive linear operator on a $p$-convex normed lattice $\mathcal{Y}$ with $p$-convexity constant  $C_p(\mathcal{Y})$. Then from the definition of $p$-convexity it follows that
$$
\conv \mathcal{G}(T,\mathcal{Y},p)\leq  C_p(\mathcal{Y}) \mathcal{G}(T,\mathcal{Y},p).
$$

\section{General linear operators and Rademacher-type}\label{sec:general_operators_type}
We now consider  general linear (not necessarily positive) operators.
We will follow the same general lines of argument as in the previous section. The key new ingredient
in this setting will be an analogue of the argument of Lemma~\ref{scalar_to_vector} which converts
scalar to vector inequalities, but now without a positivity hypothesis. Once again we shall first need to
linearise the expression $\left(\sum_{k=1}^N |T_j f_{jk}(x)|^{p_j}\right)^{1/p_j}$ in a pointwise
manner. We no longer have positivity at our disposal, so we shall instead use the sequence of Rademacher
functions, which we denote by $(\epsilon_k)$. 

\medskip
\noindent
Let us first suppose for simplicity that each $p_j = 2$.
In this case, we have, for each $j$,
$$\left(\sum_{k=1}^N |T_j f_{jk}(x)|^{2}\right)^{1/2} = \left(\mathbb{E} \left|\sum_{k=1}^N \epsilon_{k} T_j f_{jk}(x)\right|^2\right)^{1/2}$$
$$ \sim_{\theta_j} \left(\mathbb{E} \left|\sum_{k=1}^N \epsilon_{k} T_j f_{jk}(x)\right|^{2\theta_j}\right)^{1/{2\theta_j}} = \left(\mathbb{E} \left|T_j \left(\sum_{k=1}^N \epsilon_{k} f_{jk}\right)(x)\right|^{2\theta_j}\right)^{1/{2\theta_j}}$$
by Khintchine's inequality, so that
$$ \int_X \prod_{j=1}^d \left(\sum_{k=1}^N |T_j f_{jk}(x)|^{2}\right)^{\theta_j} {\rm d}\mu(x) \lesssim_{\{\theta_j\}} \mathbb{E}  \int_X \prod_{j=1}^d \left|T_j \left(\sum_{k=1}^N \epsilon_{jk} f_{jk}\right)(x)\right|^{2\theta_j} {\rm d}\mu(x).$$
If we now assume \eqref{sv_1} with $p_j =2$ for all $j$, we can dominate this last expression by
$$ B ~ \mathbb{E} \prod_{j=1}^d \|\sum_{k=1}^N \epsilon_{jk} f_{jk}\|_{\mathcal{Y}_j}^{2 \theta_j}.$$ 

\medskip
\noindent
If $\mathcal{Y}_j$ is assumed to be of Rademacher-type $2$, that is to say 
$$ \left(\mathbb{E} \left\|\sum_{k=1}^N \epsilon_{k} F_{k}\right\|_{\mathcal{Y}_j}^{2}\right)^{1/2} \leq R_2(\mathcal{Y}_j) \left(\sum_{k=1}^N \| F_{k}\|_{\mathcal{Y}_j}^{2}\right)^{1/2}$$
for some finite $R_2(\mathcal{Y}_j)$, we will obtain (using Jensen's inequality $\mathbb{E}(X^\theta) \leq \mathbb{E}(X)^\theta$ for $0 < \theta < 1$)  
$$ \int_X \prod_{j=1}^d \left(\sum_{k=1}^N |T_j f_{jk}(x)|^{2}\right)^{\theta_j} {\rm d}\mu(x)
\lesssim_{\{\theta_j\}} B~ \prod_{j=1}^d R_2(\mathcal{Y}_j)^{2\theta_j}\prod_{j=1}^d\left(\sum_{k=1}^N
  \|f_{jk}\|_{\mathcal{Y}_j}^2\right)^{\theta_j},$$
which is the analogue of \eqref{vv_1} in this setting. 

\medskip
\noindent
(Note that even in the case that each $\mathcal{Y}_j$ is an $L^2$-space, and so 
$R_2(\mathcal{Y}_j) = 1$, there is an implicit constant greater than one in this last conclusion, due to the use of Khintchine's inequality.)

\medskip
\noindent
The argument now proceeds exactly in accordance with the remarks in Section~\ref{strat}, and we arrive at:

\begin{thm}[Disentanglement theorem for general linear operators on spaces of Rademacher type $2$]\label{qwm_b} 
Suppose that $X$ is a $\sigma$-finite measure space and that $\mathcal{Y}_j$, for $j=1,\dots,d$, are normed spaces which are of Rademacher-type $2$.
Suppose that the linear operators $T_j: \mathcal{Y}_j \to \mathcal{M}(X)$
saturate $X$, and that $\sum_{j=1}^d \theta_j = 1$.
Finally, suppose that
\begin{equation*}
\int \prod_{j=1}^d |T_jf_j(x)|^{2\theta_j }{\rm d} \mu(x)
\leq B \prod_{j=1}^d \Big\|f_j\Big\|_{\mathcal{Y}_j}^{2 \theta_j}
\end{equation*}
for all $f_j$ in $\mathcal{Y}_j$, $1 \leq j \leq d$.
\medskip
Then there exist nonnegative measurable functions 
$\phi_j$ on $X$ such that 
\begin{equation*}
 \prod_{j=1}^d\phi_j(x)^{\theta_j} \geq 1
  \end{equation*}
almost everywhere on $X$, and such that for each $j$, 
\begin{equation*}
  \left(\int_X |T_jf_j(x)|^{2} \phi_j(x) {\rm d}\mu(x)\right)^{1/2}
  \lesssim_{\{\theta_j\}}B^{1/2}R_{2}(\mathcal{Y}_j)\|f_j\|_{\mathcal{Y}_j}
\end{equation*}
for all $f_j \in \mathcal{Y}_j$.
\end{thm}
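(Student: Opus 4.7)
The plan is to deduce Theorem~\ref{qwm_b} from the vector-valued disentanglement result Theorem~\ref{vector}, following the strategy outlined in Section~\ref{strat}. By that strategy, it suffices to upgrade the scalar-valued hypothesis to the corresponding $(p_j=2)$-vector-valued inequality
\begin{equation*}
  \int_X \prod_{j=1}^d \left(\sum_{k=1}^N |T_j f_{jk}(x)|^{2}\right)^{\theta_j} {\rm d}\mu(x)
  \lesssim_{\{\theta_j\}} B \prod_{j=1}^d R_2(\cy_j)^{2\theta_j} \prod_{j=1}^d \left(\sum_{k=1}^N \|f_{jk}\|_{\cy_j}^{2}\right)^{\theta_j},
\end{equation*}
since Theorem~\ref{vector} then delivers the desired $\phi_j$ with the stated quantitative constants.

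To upgrade scalar to vector, I will follow exactly the three-step randomisation argument sketched in the paragraphs preceding the statement. First, for each $j$ and each fixed $x$, replace the square function by a Rademacher average using Khintchine's inequality in the quantitatively convenient form
\begin{equation*}
\left(\sum_{k=1}^N |T_j f_{jk}(x)|^{2}\right)^{1/2} \sim_{\theta_j} \left(\be_{\epsilon} \left| \sum_{k=1}^N \epsilon_{k} T_j f_{jk}(x)\right|^{2\theta_j}\right)^{1/(2\theta_j)};
\end{equation*}
matching the random exponent to $2\theta_j$ will allow the product over $j$ to be swept under a single expectation after using independent Rademacher sequences $(\epsilon_{jk})_k$ for different $j$. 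Second, pull the Rademacher sum inside $T_j$ by linearity, integrate in $x$, pass the expectation outside via Fubini, and apply the scalar hypothesis to each realisation to bound
\begin{equation*}
\int_X \prod_{j=1}^d \left(\sum_{k=1}^N |T_j f_{jk}(x)|^{2}\right)^{\theta_j} {\rm d}\mu(x) \lesssim_{\{\theta_j\}} B~ \be \prod_{j=1}^d \Big\|\sum_{k=1}^N \epsilon_{jk} f_{jk}\Big\|_{\cy_j}^{2\theta_j}.
\end{equation*}
Third, apply the Rademacher-type $2$ hypothesis on each $\cy_j$ separately, using Jensen's inequality $\be(X^{\theta_j}) \leq (\be X)^{\theta_j}$ for the exponent $\theta_j \in (0,1]$ and the independence of the sequences $(\epsilon_{jk})_k$ across $j$ to separate the expectations, which yields the required right-hand side with the factor $\prod_j R_2(\cy_j)^{2\theta_j}$.

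The main technical point to be careful about is the interplay between the exponents $2\theta_j$ and the two probabilistic inequalities invoked: Khintchine's inequality must be applied at exponent $2\theta_j$ (where $0<2\theta_j$ can be smaller or larger than~$1$, but Khintchine holds for all positive exponents with constants depending on $\theta_j$ only), while the Rademacher-type $2$ hypothesis is naturally stated at exponent~$2$. Bridging the two requires Jensen's inequality in the correct direction when $\theta_j<1$ and the elementary bound $(\be X)^{\theta_j} \leq \be(X^{\theta_j})$ in the opposite regime where $\theta_j \geq 1$ would have been needed (though in our setting $\theta_j \in (0,1)$ since $d \geq 2$ and the $\theta_j$ sum to $1$). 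Once the scalar-to-vector upgrade is established with the claimed dependence on $B$, $R_2(\cy_j)$, and $\{\theta_j\}$, invoking Theorem~\ref{vector} with the saturating families $\{|T_jf_j|\,:\,f_j\in\cy_j\}$ and $p_j = 2$ completes the proof and delivers both \eqref{factorisebaby_upgrade_aa}-type lower bound and the individual weighted $L^2$ estimates with the constant $B^{1/2} R_2(\cy_j)$, up to an implicit constant depending on $\{\theta_j\}$.
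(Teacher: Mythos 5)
Your proposal is correct and matches the paper's own derivation step for step: Khintchine's inequality at the matched exponent $2\theta_j$, linearity of $T_j$ together with Fubini to apply the scalar hypothesis to each Rademacher realisation, Rademacher-type $2$ combined with Jensen's inequality $\mathbb{E}(X^{\theta_j})\leq(\mathbb{E}X)^{\theta_j}$ to close the vector-valued bound, and finally Theorem~\ref{vector} via the strategy of Section~\ref{strat}. The paper presents the same chain of estimates informally just before stating the theorem, and your careful remarks on the direction of Jensen and on the range $\theta_j\in(0,1)$ correspond exactly to the caveats the paper itself makes.
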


\medskip
\noindent
The special case of this result when each $\mathcal{Y}_j$ is an $L^{r_j}$-space with $2 \leq r_j < \infty$ is Theorem~\ref{lindis_2}, which immediately follows from Theorem ~\ref{qwm_b} upon using the fact (see below) that the Lebesgue space $L^r$ with $r \geq 2$ has Rademacher-type $2$.

\medskip
\noindent
We now need to discuss what happens when one or more of the $p_j$ are not equal to $2$. We need the notion of Rademacher-type $p$.

\begin{definition}[Rademacher-type]
  Let $0 < p \leq 2$. A quasi-normed space $\mathcal{Y}$ is of Rademacher-type $p$
  if for all finite sequences $(F_k)$ in $\mathcal{Y}$ we have
$$ \left(\mathbb{E} \left\|\sum_{k=1}^N \epsilon_{k} F_{k}\right\|_{\mathcal{Y}}^{p}\right)^{1/p} \leq R_p(\mathcal{Y}) \left(\sum_{k=1}^N \| F_{k}\|_{\mathcal{Y}}^{p}\right)^{1/p}$$
for some finite constant $R_p(\mathcal{Y})$.
\end{definition}

\medskip
\noindent
The least such constant is denoted by $R_p(\mathcal{Y})$ and is called the $p$-Rademacher-type constant of $\mathcal{Y}$. When $ 0 < r \leq 2$, the Lebesgue space $L^r$ has Rademacher-type $p$ for $0 < p \leq r$; when $2<r<\infty$, $L^r$ has Rademacher-type $p$ for $0<  p \leq 2$. Every normed space $\mathcal{Y}$ has Rademacher-type $1$. Note that by Khintchine's inequality, if a quasi-normed space is of Rademacher-type $p$, then it is also of Rademacher-type $\tilde{p}$ for all $0 <\tilde{p} \leq p$. Observe that the one-dimensional normed space $\mathbb{R}$ (and more generally any Hilbert space) has Rademacher-type $2$ with corresponding constant $1$.
When $0<p<1$, Rademacher-type $p$ is equivalent to $p$-normability, i.e. the existence of a constant $C$ such that 
$\| \sum_{k=1}^N F_k \|_{\cy} \leq C (\sum_{k=1}^N \|F_k\|_{\cy}^p)^{1/p}$.

\medskip
\noindent
Ideally we would hope to have:

\begin{asp}
[General disentanglement aspiration for linear operators]\label{qwm_c} 
Suppose that $X$ is a $\sigma$-finite measure space and that  $\mathcal{Y}_j$, for $j=1,\dots,d$, are quasi-normed spaces which are of Rademacher-type $p_j$
for certain $0 <p_j \leq 2 $. Suppose that the linear
operators $T_j: \mathcal{Y}_j \to \mathcal{M}(X)$ saturate $X$, and that $\sum_{j=1}^d \theta_j = 1$.
Finally, suppose that
\begin{equation}\label{def_2}
\int \prod_{j=1}^d |T_jf_j(x)|^{p_j \theta_j }{\rm d} \mu(x)
\leq B \prod_{j=1}^d \Big\|f_j\Big\|_{\mathcal{Y}_j}^{p_j\theta_j}
\end{equation}
for all $f_j$ in $\mathcal{Y}_j$, $1 \leq j \leq d$.
\medskip
Then there exist nonnegative measurable functions 
$\phi_j$ on $X$ such that 
\begin{equation}\label{factorisebaby_upgrade_aaa}
 \prod_{j=1}^d\phi_j(x)^{\theta_j} \geq 1
  \end{equation}
almost everywhere on $X$ and such that for each $j$,
\begin{equation}\label{controlbaby_upgrade_aa}
  \left(\int_X |T_jf_j(x)|^{p_j} \phi_j(x) {\rm d}\mu(x)\right)^{1/p_j}
  \lesssim_{\{\theta_j,  p_j\}} B^{1/p_j}R_{p_j}(\mathcal{Y}_j)\|f_j\|_{\mathcal{Y}_j}
\end{equation}
for all $f_j \in \mathcal{Y}_j$.
\end{asp}

We cannot hope for this to be true in general in situations in which some $p_j<2$, see the Appendix. Nevertheless, we are able to prove
something slightly weaker, namely that the aspiration is in fact a
theorem under the stronger hypothesis that
for those $j$ with $p_j < 2$, the normed spaces $\mathcal{Y}_j$ have Rademacher-type {\em strictly} larger than $p_j$.

\begin{thm}
[Disentanglement theorem for general linear operators on spaces of non-trivial Rademacher type]\label{qwm_ba}
Let $X$ be a $\sigma$-finite measure space and $\mathcal{Y}_j$ quasi-normed spaces. Let $T_j:\mathcal{Y}_j\to \cm(X)$ be linear operators. Suppose that the linear
operators $T_j$ saturate $X$. Let $0 < p_j \leq 2$ and $\sum_{j=1}^d \theta_j = 1$. Assume that
\begin{equation}\label{def_3}
\int \prod_{j=1}^d |T_jf_j(x)|^{p_j \theta_j }{\rm d} \mu(x)
\leq B \prod_{j=1}^d \Big\|f_j\Big\|_{\mathcal{Y}_j}^{p_j\theta_j}
\end{equation}
for all $f_j$ in $\mathcal{Y}_j$, $1 \leq j \leq d$. 

\medskip
Suppose moreover that each space $\mathcal{Y}_j$ has Rademacher-type $r_j=2$
for those $j$ with $p_j =2$, and has Rademacher-type $r_j>p_j$
for those $j$ with $p_j <2$.

\medskip
Then there exist nonnegative measurable functions 
$\phi_j$ on $X$ such that
\begin{equation*}
 \prod_{j=1}^d\phi_j(x)^{\theta_j} \geq 1
  \end{equation*}
almost everywhere on $X$ and such that for each $j$,
\begin{equation*}
  \left(\int_X |T_jf_j(x)|^{p_j} \phi_j(x) {\rm d}\mu(x)\right)^{1/p_j}
  \lesssim_{\{\theta_j, p_j, r_j\}} B^{1/p_j} R_{r_j}(\cy_j)\|f_j\|_{\mathcal{Y}_j}
\end{equation*}
for all $f_j \in \mathcal{Y}_j$.
  \end{thm}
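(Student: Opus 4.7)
The plan is to use the vector-valued strategy of Section~\ref{strat}: by Theorem~\ref{vector}, it suffices to upgrade the scalar hypothesis \eqref{def_3} to the $(p_j)$-vector-valued inequality
\[
\int_X \prod_{j=1}^d \Big(\sum_{k=1}^N |T_jf_{jk}(x)|^{p_j}\Big)^{\theta_j} \,{\rm d}\mu(x)
\lesssim_{\{\theta_j,p_j,r_j\}} B \prod_{j=1}^d R_{r_j}(\cy_j)^{p_j\theta_j} \prod_{j=1}^d \Big(\sum_{k=1}^N \|f_{jk}\|_{\cy_j}^{p_j}\Big)^{\theta_j}.
\]
We may assume $d \geq 2$ and all $\theta_j > 0$ (the complementary cases being either trivial or reducing to smaller $d$), so that $\theta_j < 1$, and in particular $p_j\theta_j < p_j \leq 2$ for every $j$; this mild strict inequality is essential in what follows.

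The next step is a pointwise linearization of the inner $\ell^{p_j}$-sum. For each $j$ with $p_j = 2$ I introduce an independent Rademacher family $(\epsilon_k^{(j)})_k$; for each $j$ with $p_j < 2$ I introduce an independent family $(\xi_k^{(j)})_k$ of i.i.d.\ symmetric $p_j$-stable random variables. Khintchine's inequality in the former case, and the scalar stability identity $\sum_k \xi_k^{(j)} a_k \stackrel{d}{=} (\sum_k |a_k|^{p_j})^{1/p_j} \xi_1^{(j)}$ combined with the finiteness of $\be|\xi_1^{(j)}|^{p_j\theta_j}$ (using $p_j\theta_j < p_j$) in the latter case, together yield the common pointwise equivalence
\[
\Big(\sum_k |T_jf_{jk}(x)|^{p_j}\Big)^{\theta_j}
\sim_{p_j,\theta_j}
\be\Big|T_j\Big(\sum_k \eta_k^{(j)} f_{jk}\Big)(x)\Big|^{p_j\theta_j},
\]
where $\eta_k^{(j)}$ denotes the relevant random variable. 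Fubini and independence of the random families across $j$, followed by a pointwise application of the scalar hypothesis \eqref{def_3} in the probability variable, reduces matters to bounding the product
\[
\prod_{j=1}^d \be\Big\|\sum_k \eta_k^{(j)} f_{jk}\Big\|_{\cy_j}^{p_j\theta_j}
\lesssim \prod_{j=1}^d R_{r_j}(\cy_j)^{p_j\theta_j}\Big(\sum_k \|f_{jk}\|_{\cy_j}^{p_j}\Big)^{\theta_j}.
\]

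The remaining ingredient, which I expect to be the main obstacle, is the single-index moment bound
\[
\Big(\be\Big\|\sum_k \eta_k^{(j)} f_{jk}\Big\|_{\cy_j}^{s}\Big)^{1/s}
\lesssim_{s,p_j,r_j} R_{r_j}(\cy_j)\Big(\sum_k \|f_{jk}\|_{\cy_j}^{p_j}\Big)^{1/p_j}
\]
for $s = p_j\theta_j$. When $p_j = 2$ this is immediate from Jensen's inequality (valid since $s < 2$), the definition of Rademacher-type $2$, and Kahane's inequality to move between moments of a Rademacher sum. When $p_j < 2$ this is the deeper assertion -- essentially due to Maurey -- that a quasi-normed space of Rademacher-type $r > p$ has stable type $p$: the $s$-th moment for $0 < s < p$ of any symmetric $p$-stable random sum is controlled by $R_r(\cy)$ times the $\ell^p$-norm of the coefficients. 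My plan for proving this is to apply the LePage series representation of a symmetric $p_j$-stable random variable as a conditionally Rademacher-randomized Poissonian series with weights $\Gamma_k^{-1/p_j}$, condition on the arrival times, invoke the Rademacher-type $r_j$ inequality in $\cy_j$ for the conditional Rademacher sum, and then take expectations over the Poissonian weights; the strict inequality $r_j > p_j$ is precisely what guarantees almost-sure $\ell^{p_j}$-convergence of the series and the finiteness of the resulting Poissonian moment (handled at small indices by a standard truncation).

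Combining this moment bound with the preceding display produces the desired $(p_j)$-vector-valued inequality, and invoking Theorem~\ref{vector}, together with a final rebalancing of homogeneities to produce the stated dependence on $B^{1/p_j}$, delivers the conclusion of Theorem~\ref{qwm_ba}.
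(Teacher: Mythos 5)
Your proposal is correct and follows essentially the same route as the paper: pass to the $(p_j)$-vector-valued inequality via the Section~\ref{strat} equivalence, linearise the inner $\ell^{p_j}$ sums by Rademacher variables when $p_j=2$ and by symmetric $p_j$-stable variables when $p_j<2$ (using $p_j\theta_j<p_j$ so the relevant moments are finite), apply Fubini and the scalar hypothesis, and finish with the fact that Rademacher-type $r_j>p_j$ implies stable-type $p_j$. The one difference is cosmetic: where you propose to reprove the stable-type implication via the LePage series representation (with the small-index truncation to handle the non-integrability of $\Gamma_1^{-r/p}$), the paper simply cites Pisier's theorem as Lemma~\ref{lemma:stable_rademacher}; your sketch is the standard proof of that result and is sound.
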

 Using the fact that the Lebesgue space $L^r$ (with $0 <  r < \infty$) has Rademacher-type $\min\{2,r\}$ and hence also Rademacher-type $\tilde{r}$ for every $0 < \tilde{r} \leq \min\{2,r\}$ we immediately obtain Theorem~\ref{answer:general_operators}, (and also the assertion made in the accompanying footnote).

\begin{proof}

  Once again the key issue is to pass from the scalar-valued inequality
  \eqref{def_3} to the vector-valued inequality analogous to \eqref{vv_1}, and this is achieved by linearising the expression
  $$\left(\sum_{k=1}^N |T_j f_{jk}(x)|^{p_j}\right)^{1/p_j}$$
  for each $j$. 
  When $p_j = 2$ the Rademacher functions achieve this, but they are unsuited to do so when $0 < p_j < 2$ and instead we use $p$-stable random variables. (For simplicity of notation, in what follows we shall assume that $p_j < 2$ for all $j$; the easy modifications when $p_j = 2$ for some $j$ are left to the reader.)

  \medskip
  \noindent
 We recall that for $0<p\leq 2$, a real-valued random variable $\gamma$ on a probability space is called (normalised) {\it $p$-stable} if it satisfies $\be(e^{it\gamma})=e^{-\abs{t}^p}$. Note that the distribution (i.e. the pushforward measure on the real line) of a $p$-stable random variable is unique because the characteristic function (i.e. the Fourier transform up to a sign) of a random variable determines its distribution. These random variables enjoy the following key property:
  
  \begin{lemma}[Key property of independent $p$-stable random variables]\label{key}Let $0<q< p\leq 2$. Let $(\gamma_k)$ be a sequence of independent $p$-stable random variables. Then
  $$
\left(  \be \abs{\sum_k \gamma_k a_k}^q\right)^{1/q}\sim_{p,q} \left( \sum_k \abs{a_k}^p\right)^{1/p}
  $$
  for all sequences $(a_k)$ of scalars.
  \end{lemma}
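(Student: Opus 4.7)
The plan is to exploit the defining characteristic-function identity and independence to show that the linear combination $\sum_k a_k \gamma_k$ is itself distributed as a scalar multiple of a single $p$-stable random variable, and then to reduce the lemma to a statement about $q$-th moments of that single random variable.

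First I would compute, using independence and the relation $\mathbb{E}(e^{it\gamma_k}) = e^{-|t|^p}$,
\begin{equation*}
\mathbb{E}\bigl(e^{it \sum_k a_k \gamma_k}\bigr) = \prod_k \mathbb{E}\bigl(e^{i(ta_k)\gamma_k}\bigr) = \prod_k e^{-|ta_k|^p} = e^{-|t|^p S^p},
\end{equation*}
where $S := \bigl(\sum_k |a_k|^p\bigr)^{1/p}$. Since the characteristic function determines the distribution, and since a single $p$-stable random variable $\gamma$ satisfies $\mathbb{E}(e^{it (S\gamma)}) = e^{-|tS|^p} = e^{-|t|^p S^p}$, it follows that $\sum_k a_k \gamma_k$ and $S\gamma$ have the same distribution. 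Consequently
\begin{equation*}
\mathbb{E}\Bigl|\sum_k a_k \gamma_k\Bigr|^q = S^q\, \mathbb{E}|\gamma|^q = \bigl(\mathbb{E}|\gamma|^q\bigr)\,\Bigl(\sum_k |a_k|^p\Bigr)^{q/p}.
\end{equation*}
Taking $q$-th roots reduces the asserted equivalence to the single assertion that $\mathbb{E}|\gamma|^q$ is a finite positive constant depending only on $p$ and $q$.

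The main (and essentially only) obstacle is verifying $0 < \mathbb{E}|\gamma|^q < \infty$ for $0 < q < p$. The lower bound is immediate from $\mathbb{E}|\gamma| > 0$ (otherwise $\gamma \equiv 0$, contradicting $\mathbb{E}(e^{it\gamma})=e^{-|t|^p}$), so the content is the upper bound. When $p=2$ the random variable $\gamma$ is (a multiple of) a standard Gaussian and the bound is classical. When $0 < p < 2$, I would establish the standard heavy-tail asymptotics $\mathbb{P}(|\gamma| > \lambda) \lesssim_p \lambda^{-p}$ for $\lambda \geq 1$; this can be deduced from the characteristic function $e^{-|t|^p}$ by a Fourier-analytic tail estimate (or cited directly from any standard treatment of stable laws, e.g.\ via the layer-cake formula together with the explicit tail behaviour of the $p$-stable density). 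The finiteness of $\mathbb{E}|\gamma|^q$ for all $q < p$ then follows by
\begin{equation*}
\mathbb{E}|\gamma|^q = q\int_0^\infty \lambda^{q-1}\mathbb{P}(|\gamma|>\lambda)\,\mathrm{d}\lambda \lesssim_p 1 + q\int_1^\infty \lambda^{q-1-p}\,\mathrm{d}\lambda < \infty,
\end{equation*}
since $q<p$. The strictness $q<p$ is necessary and sharp precisely because the tails decay no faster than $\lambda^{-p}$. Putting this together with the distributional identity above yields the claimed two-sided equivalence with constants depending only on $p$ and $q$.
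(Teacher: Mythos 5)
Your proof is correct. The paper does not actually prove this lemma: it records it as a standard fact about $p$-stable random variables and refers the reader to Albiac--Kalton \cite{kalton2016} (Sections 6.2, 6.4, 7.1). Your reconstruction is exactly the classical argument those references give: by independence and the definition $\mathbb{E}(e^{it\gamma_k})=e^{-|t|^p}$, the characteristic function of $\sum_k a_k\gamma_k$ is $e^{-|t|^p\sum_k|a_k|^p}$, so $\sum_k a_k\gamma_k$ is equal in distribution to $\bigl(\sum_k|a_k|^p\bigr)^{1/p}\gamma$; the lemma then reduces to the finiteness and positivity of $\mathbb{E}|\gamma|^q$ for $q<p$, which (for $p<2$) follows from the tail bound $\mathbb{P}(|\gamma|>\lambda)\lesssim_p\lambda^{-p}$ — one clean way to get this from the characteristic function is the truncation inequality $\mathbb{P}(|\gamma|\ge 2/u)\le u^{-1}\int_{-u}^{u}\bigl(1-e^{-|t|^p}\bigr)\,{\rm d}t\lesssim_p u^p$ — and for $p=2$ from Gaussianity. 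One small caveat: as written, your characteristic-function step treats the $a_k$ as real; since the paper's applications (e.g.\ Fourier extension operators) involve complex-valued functions, for complex scalars one should split $a_k=b_k+ic_k$, note that the real and imaginary parts of $\sum_k a_k\gamma_k$ are each distributed as scalar multiples of $\gamma$, and use $\bigl(\sum_k|b_k|^p\bigr)^{1/p}+\bigl(\sum_k|c_k|^p\bigr)^{1/p}\sim_p\bigl(\sum_k|a_k|^p\bigr)^{1/p}$ to recover the two-sided bound (one then loses the exact constant $\bigl(\mathbb{E}|\gamma|^q\bigr)^{1/q}$ but retains $\sim_{p,q}$, which is all the lemma asserts). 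This is routine and does not affect the substance of your argument.
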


  \medskip
  \noindent
 Pisier proved in \cite{pisier1974} that this property can be upgraded to the vector-valued setting under an appropriate hypothesis of Rademacher-type:
  
  \begin{lemma}[Rademacher-type $r$ implies stable-type $p<r$]\label{lemma:stable_rademacher} Let $0< q<p<r\leq 2$. Let $\mathcal{Y}$ be a quasi-normed space of Rademacher-type $r$. Let $(\gamma_k)$ be a sequence of independent $p$-stable random variables. Then
$$
\left(\be \norm{\sum_{k} \gamma_k f_k }_{\cy}^q\right)^{1/q}\lesssim_{p,q,r} R_r(\cy) \left( \sum_k \norm{f_k}_{\cy}^p \right)^{1/p}
$$
for all sequences $(f_k)$ of vectors.
\end{lemma}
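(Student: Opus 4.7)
Plan.

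The strategy is to reduce the vector-valued $p$-stable inequality to the Rademacher-type hypothesis on $\mathcal{Y}$ by means of the classical LePage series representation of symmetric $p$-stable random variables. Recall that, for $0<p<2$, a normalised symmetric $p$-stable variable $\gamma$ admits the distributional identity $\gamma \stackrel{d}{=} c_p \sum_{j=1}^\infty \epsilon_j \Gamma_j^{-1/p}$, where $(\Gamma_j)_{j\geq 1}$ are the arrival times of a standard rate-$1$ Poisson process on $[0,\infty)$, $(\epsilon_j)$ are independent Rademacher signs, and all the variables are mutually independent. Applying this representation simultaneously to each $\gamma_k$ using independent Poisson processes $(\Gamma_{k,j})_{j}$ and sign sequences $(\epsilon_{k,j})_{j}$, the first move is to rewrite
\[
\sum_{k} \gamma_k f_k \stackrel{d}{=} c_p \sum_{k,j} \epsilon_{k,j}\, \Gamma_{k,j}^{-1/p}\, f_k,
\]
turning a $p$-stable series in $\mathcal{Y}$ into a doubly-indexed Rademacher series with random coefficients.

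Next, conditioning on the Poisson arrivals $(\Gamma_{k,j})$, I would apply the Rademacher-type $r$ hypothesis for $\mathcal{Y}$ to obtain
\[
\Bigl(\mathbb{E}_\epsilon \Bigl\| \sum_{k,j} \epsilon_{k,j} \Gamma_{k,j}^{-1/p} f_k \Bigr\|_{\mathcal{Y}}^{r}\Bigr)^{1/r} \leq R_r(\mathcal{Y}) \Bigl(\sum_{k,j} \Gamma_{k,j}^{-r/p} \|f_k\|_{\mathcal{Y}}^{r}\Bigr)^{1/r}.
\]
Since $q<r$, Jensen's inequality upgrades this to the corresponding $L^q(\epsilon)$ estimate without loss. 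Taking expectations over $(\Gamma_{k,j})$ and rearranging, the whole argument reduces to the purely scalar moment inequality
\[
\Bigl(\mathbb{E}\Bigl(\sum_{k} \|f_k\|_{\mathcal{Y}}^{r}\, S_k\Bigr)^{q/r}\Bigr)^{1/q} \lesssim_{p,q,r} \Bigl(\sum_{k} \|f_k\|_{\mathcal{Y}}^{p}\Bigr)^{1/p},
\]
where $S_k := \sum_{j\geq 1} \Gamma_{k,j}^{-r/p}$ are i.i.d.\ non-negative random variables, finite almost surely because $r/p>1$.

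The key scalar input now is to identify the distribution of $S_k$. By a standard computation with the Poisson L\'evy measure (Campbell's theorem applied to $t\mapsto t^{-r/p}$), each $S_k$ equals, up to a multiplicative constant, a positive $(p/r)$-stable random variable. The positive-stable analogue of Lemma \ref{key} --- the stability identity $\sum_k a_k Z_k \stackrel{d}{=} (\sum_k a_k^{\alpha})^{1/\alpha}Z$ for i.i.d.\ positive $\alpha$-stable $(Z_k)$ with $\alpha\in(0,1)$ --- then gives, for any $0<q'<\alpha$,
\[
\mathbb{E}\Bigl(\sum_{k} a_k Z_k\Bigr)^{q'} = \Bigl(\sum_{k} a_k^{\alpha}\Bigr)^{q'/\alpha}\mathbb{E} Z^{q'}.
\]
Taking $\alpha=p/r$, $q'=q/r$, and $a_k=\|f_k\|_{\mathcal{Y}}^{r}$ converts the right-hand side exactly into $(\sum_k \|f_k\|_{\mathcal{Y}}^{p})^{q/p}$, closing the chain.

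The numerology in the statement is essentially forced by this scheme: $r>p$ makes the series $S_k$ converge almost surely so that the conditional Rademacher-type application is meaningful, and $q<p$ (equivalently $q/r<p/r$) is precisely what guarantees finiteness of the $(q/r)$-th moment of the positive $(p/r)$-stable $S_k$ in the final scaling step. The main technical obstacles I anticipate are (i) the careful invocation of the LePage representation itself, together with the Fubini-type interchanges of expectations over $\epsilon$ and $\Gamma$, and (ii) a clean justification that $S_k$ is indeed a scaled positive $(p/r)$-stable variable --- routine via the L\'evy--Khintchine / Campbell formula but requiring some bookkeeping. Everything else is a matter of combining Rademacher-type with Jensen's inequality and the stability scaling identity.
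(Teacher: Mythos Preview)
The paper does not actually prove this lemma: it attributes the result to Pisier \cite{pisier1974} and refers the reader to \cite[Sections 6.2, 6.4, 7.1]{kalton2016} for a textbook treatment, using the statement as a black box. Your proposal via the LePage series representation is correct and is essentially the standard argument one finds in those references: represent each $p$-stable $\gamma_k$ as $c_p\sum_j \epsilon_{k,j}\Gamma_{k,j}^{-1/p}$, apply Rademacher-type $r$ conditionally on the Poisson arrivals, and then exploit that $S_k=\sum_j \Gamma_{k,j}^{-r/p}$ is (a constant multiple of) a positive $(p/r)$-stable variable whose $(q/r)$-th moment is finite precisely because $q<p$. The numerological observations you make about why $r>p$ and $q<p$ are needed are exactly right.

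The only places to be careful when writing this out in full are the ones you already flag: one must either truncate the LePage series and pass to the limit (using that Rademacher-type is stated for finite sequences), or justify directly that the infinite Rademacher sum converges in $L^r(\epsilon;\mathcal{Y})$ almost surely in $\Gamma$; and the identification of $S_k$ as positive $(p/r)$-stable is cleanest via the Laplace transform computation $\mathbb{E}\exp(-\lambda S_k)=\exp(-c\lambda^{p/r})$, which follows from Campbell's formula. None of this is problematic.
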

 Note that we need $q<p$ in the above lemmas because $p$-stable random variables fail to be $p$-integrable. 
For a textbook treatment of Rademacher and $p$-stable random variables and Rademacher and $p$-stable types, see for example \cite[Sections 6.2, 6.4, and  7.1]{kalton2016}.
  
  \medskip
  \noindent
  Now, for each $j=1,\ldots,d$ let $(\gamma_{jk})$ be a sequence of independent $p_j$-stable random variables. Then, by Lemma \ref{key},  we have
 $$\left(\sum_{k=1}^N |T_j f_{jk}(x)|^{p_j}\right)^{1/p_j} \sim_{\{\theta_j\}}
  \left(\mathbb{E} \left|\sum_k \gamma_{jk} T_j f_{jk}(x) \right|^{p_j \theta_j}\right)^{1/p_j\theta_j}.$$
  Using this linearisation we can re-phrase the left-hand side of the vector-valued inequality in terms of the left-hand side of the scalar-valued inequality,
  $$ \int_X \prod_{j=1}^d \left(\sum_{k=1}^N |T_j f_{jk}(x)|^{p_j}\right)^{\theta_j} {\rm d} \mu(x)
  \sim_{\{\theta_j\}} \mathbb{E} \int_X \prod_{j=1}^d  \left|\sum_k \gamma_{jk} T_j f_{jk}(x) \right|^{p_j \theta_j}{\rm d} \mu(x)$$
  $$=\mathbb{E} \int_X \prod_{j=1}^d  \left|T_j (\sum_k \gamma_{jk} f_{jk}) (x)\right|^{p_j \theta_j}{\rm d} \mu(x)
  .$$
  Using the assumed scalar-valued inequality \eqref{def_3}, we have the estimate
  $$ \mathbb{E} \int_X \prod_{j=1}^d  \left|T_j (\sum_k \gamma_{jk} f_{jk})(x)\right|^{p_j \theta_j}{\rm d} \mu(x)\leq B \mathbb{E} \prod_{j=1}^d \|\sum_k \gamma_{jk} f_{jk}\|_{\mathcal{Y}_j}^{p_j \theta_j}
  = B \prod_{j=1}^d \mathbb{E} \left(\|\sum_k \gamma_{jk} f_{jk}\|_{\mathcal{Y}_j}^{p_j \theta_j}\right).
$$

By Lemma \ref{lemma:stable_rademacher}, together with the assumption that each space $\cy_j$ has Rademacher-type $r_j>p_j$, and the fact that $\theta_j < 1$, we obtain 
 $$ \mathbb{E}\left(\|\sum_k \gamma_{jk} f_{jk}\|_{\mathcal{Y}_j}^{p_j \theta_j}\right)
  \lesssim_{\theta_j, p_j, r_j} R_{r_j}(\cy_j)^{p_j\theta_j}\left(\sum_k \|f_{jk}\|_{\mathcal{Y}}^{p_j}\right)^{\theta_j}$$
  for each $j$
and therefore
\begin{equation*}
\mathbb{E} \prod_{j=1}^d \|\sum_k \gamma_{jk} f_{jk}\|_{\mathcal{Y}_j}^{p_j \theta_j}
  \lesssim_{\theta_j, p_j, r_j}  \prod_{j=1}^d  R_{r_j}(\cy_j)^{p_j\theta_j}\left(\sum_k \|f_{jk}\|_{\mathcal{Y}}^{p_j}\right)^{\theta_j}.
  \end{equation*}
  
\medskip
\noindent
Summarising, we have proved that if the quasi-normed spaces $\mathcal{Y}_j$ have Rademacher-type $r_j$, then the scalar-valued inequality \eqref{def_3} implies the vector-valued inequality
$$ \int_X \prod_{j=1}^d \left(\sum_{k=1}^N |T_j f_{jk}(x)|^{p_j}\right)^{\theta_j} {\rm d} \mu(x)
\lesssim_{\theta_j, p_j, r_j}  \prod_{j=1}^d  R_{r_j}(\cy_j)^{p_j\theta_j}\left(\sum_k \|f_{jk}\|_{\mathcal{Y}}^{p_j}\right)^{\theta_j}.$$
By the remarks in Section~\ref{strat}, this suffices to establish Theorem~\ref{qwm_ba}.
\end{proof}

\medskip
\noindent
{\em Remark.} Since the linearisation arguments of Theorems ~\ref{qwm_bb} and \ref{qwm_ba} run componentwise, in the case where some of the operators are positive on $p_j$-convex lattices and some non-positive on $r_j$-Rademacher-type normed spaces, we may obtain a hybrid of these two theorems, whose precise formulation we leave to the interested reader.

\section{Multilinear duality and Maurey factorisation extended}\label{revisit}
In this section we apply the two main disentanglement theorems (Theorem~\ref{qwm_bb} for positive linear operators, and Theorem~\ref{qwm_ba} for general linear operators respectively) to deduce multilinear duality and multilinear Maurey factorisation theorems in the spirit of Theorem~\ref{thmmainbaby}.
The treatment we give is very much in parallel to the manner in which Cases II and III of Theorem~\ref{thmmainbaby} can be deduced from Case I.

\medskip
\noindent
Note that Multilinear Maurey factorisation theorems below (Cases III of Theorems~\ref{thm:lq_positive} and \ref{thm:lq_linear}) in the linear case $d=1$  recover  the Maurey factorisation theorems for linear operators \cite{maurey}. We emphasise, however, that our main theorems (Disentanglement Theorems \ref{qwm_bb} and \ref{qwm_ba}) have no linear counterparts since in the case $d=1$ they are vacuous.

\medskip
\noindent
\subsection{Positive operators}
We begin with the setting of positive operators.

\begin{thm}
\label{thm:lq_positive} 
Suppose that $X$ is a $\sigma$-finite measure space and that $\mathcal{Y}_j$ for $j=1,\dots,d$ are $p_j$-convex normed lattices for some $1 \leq p_j < \infty$.
Suppose that the linear
operators $T_j: \mathcal{Y}_j \to \mathcal{M}(X)$ are positive and 
that each $T_j$ saturates $X$. Suppose that $\theta_j > 0$ and that $\sum_{j=1}^d \theta_j = 1$.
Finally, suppose that for some $0 < q \leq \infty$ we have
\begin{equation}\label{abc_1234}
\|\prod_{j=1}^d (T_jf_j)^{p_j \theta_j }\|_{L^q({\rm d} \mu)}
\leq B \prod_{j=1}^d \Big\|f_j\Big\|_{\mathcal{Y}_j}^{p_j\theta_j}
\end{equation}
for all nonnegative $f_j$ in $\mathcal{Y}_j$, $1 \leq j \leq d$.

\medskip
\begin{enumerate}
\item[{\bf Case I.}] {\bf (Disentanglement).} $q = 1$. See Theorem~\ref{qwm_bb}.

\medskip
\item[{\bf Case II.}] {\bf (Multilinear Duality).} If $q >1$, then for every nonnegative $G \in L^{q'}(X)$ there exist nonnegative measurable functions $g_j$ on $X$ such that 
$$ G(x) \leq \prod_{j=1}^d g_j(x)^{\theta_j}$$
almost everywhere, and such that 
$$
 \left(\int_X |T_jf_j(x)|^{p_j} g_j(x) {\rm d}\mu(x)\right)^{1/p_j}
  \leq B^{1/p_j}C_{p_j}(\mathcal{Y}_j)\|G\|_{q'}\|f_j\|_{\mathcal{Y}_j}
$$
for all $f_j \in \mathcal{Y}_j$.

\bigskip
\item[{\bf Case III.}] {\bf (Multilinear Maurey Factorisation).} If $0 < q < 1$ then there exist nonnegative measurable functions $g_j$ on $X$ such that 
$$ \| \prod_{j=1}^d g_j(x)^{\theta_j}\|_{q'} = 1$$
and such that 
$$
 \left(\int_X |T_jf_j(x)|^{p_j} g_j(x) {\rm d}\mu(x)\right)^{1/p_j}
  \leq B^{1/p_j}C_{p_j}(\mathcal{Y}_j)\|f_j\|_{\mathcal{Y}_j}
$$
for all $f_j \in \mathcal{Y}_j$.

\end{enumerate}
\end{thm}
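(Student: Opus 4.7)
The plan is to reduce Cases II ($q > 1$) and III ($0 < q < 1$) to Case I, which is Theorem~\ref{qwm_bb}, following the template by which Cases II and III of Theorem~\ref{thmmainbaby} were derived from Case I in \cite{CHV1}. The essential algebraic identity, valid because $\sum_{j=1}^d \theta_j = 1$, is
\begin{equation*}
G(x) \prod_{j=1}^d (T_j f_j)(x)^{p_j \theta_j} = \prod_{j=1}^d \bigl( G(x)^{1/p_j} T_j f_j(x) \bigr)^{p_j \theta_j}.
\end{equation*}
This rewrites any $G$-weighted version of \eqref{abc_1234} as an ordinary disentanglement hypothesis of the form \eqref{abc_1abcd} for the modified operators $\widetilde{T}_j f_j := G^{1/p_j} T_j f_j$, which remain positive, linear and saturating on the support of $G$.

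For Case II, fix a nonnegative $G \in L^{q'}(X)$, which we may normalise so that $\|G\|_{q'} = 1$. H\"older's inequality combined with \eqref{abc_1234} gives
\begin{equation*}
\int_X G \prod_{j=1}^d (T_j f_j)^{p_j \theta_j} \, d\mu \leq B \prod_{j=1}^d \|f_j\|_{\mathcal{Y}_j}^{p_j \theta_j}.
\end{equation*}
By the identity above this is exactly the disentanglement hypothesis for $\widetilde{T}_j$ on $\{G > 0\}$, so Theorem~\ref{qwm_bb} produces $\phi_j \geq 0$ with $\prod_j \phi_j^{\theta_j} \geq 1$ and $(\int_X |\widetilde{T}_j f_j|^{p_j} \phi_j \, d\mu)^{1/p_j} \leq B^{1/p_j} C_{p_j}(\mathcal{Y}_j) \|f_j\|_{\mathcal{Y}_j}$. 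Setting $g_j := G \phi_j$ (extended by zero off $\{G > 0\}$) yields $\prod_j g_j^{\theta_j} \geq G$, and unfolding $|\widetilde{T}_j f_j|^{p_j} = G |T_j f_j|^{p_j}$ recovers the claimed weighted $L^{p_j}$ bound on $T_j f_j$. A rescaling argument handles general $G$.

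For Case III with $0 < q < 1$ the underlying tool is the reverse H\"older identity
\begin{equation*}
\|F\|_q = \inf \left\{ \int_X F G \, d\mu : G > 0, \ \|G\|_{q'} = 1 \right\}, \qquad F \geq 0,
\end{equation*}
where $\|\cdot\|_{q'}$ is interpreted as in the footnote to Theorem~\ref{thmmainbaby}. The aim is to find a single $G_0 \geq 0$ with $\|G_0\|_{q'} = 1$ satisfying
\begin{equation*}
\int_X G_0 \prod_{j=1}^d (T_j f_j)^{p_j \theta_j} \, d\mu \leq B \prod_{j=1}^d \|f_j\|_{\mathcal{Y}_j}^{p_j \theta_j}
\end{equation*}
uniformly in the $f_j$. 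Once such $G_0$ is in hand, the Case II argument with $G_0$ in place of $G$ produces $g_j \geq 0$ with $\prod_j g_j^{\theta_j} \geq G_0$ and the required $L^{p_j}$ estimates. Since $q' < 0$, the pointwise inequality $\prod_j g_j^{\theta_j} \geq G_0$ forces $\|\prod_j g_j^{\theta_j}\|_{q'} \geq \|G_0\|_{q'} = 1$; a concluding multiplicative rescaling of the $g_j$ by a common constant $\leq 1$ brings this to equality while only improving the $L^{p_j}$ bounds.

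The principal obstacle will be producing the uniform $G_0$ in Case III: pointwise reverse H\"older furnishes, for each fixed choice of the $f_j$, an essentially extremal weight $G_{(f_j)}$, but a single $G_0$ working simultaneously for all $(f_j)$ is needed. The plan is to invoke a Ky Fan minimax argument on the convex constraint $\|G\|_{q'} \geq 1$ — convex because $t \mapsto t^{q'}$ is convex on $(0,\infty)$ when $q' < 0$ — paired with the homogeneous upper bound supplied by \eqref{abc_1234}; a preliminary exhaustion of $X$ by sets of finite measure supplies the compactness required. This is the direct analogue of the corresponding step in the proof of Case III of Theorem~\ref{thmmainbaby} in \cite{CHV1}, and adapts to the present $p_j$-th power setting with only bookkeeping changes.
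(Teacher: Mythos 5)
Your Case II argument is essentially the paper's argument in a slightly different costume: where you absorb $G$ into the operators via $\widetilde T_j := G^{1/p_j}T_j$ and keep the measure $d\mu$, the paper keeps the operators $T_j$ and replaces the measure by $G\,d\mu$ (which is still $\sigma$-finite, and for which $T_j$ still saturates). The two presentations are algebraically interchangeable, and both land on $g_j = G\cdot(\text{output of Theorem~\ref{qwm_bb}})$ with the claimed bounds. So Case II is fine.

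Case III is where your proposal genuinely diverges from the paper and is also where it is incomplete. The paper's route is a clean reduction to Case~I: rewrite \eqref{abc_1234} in integral form, append a one-dimensional normed lattice $\cy_{d+1}$ with the trivial operator $T_{d+1}(\lambda y)=\lambda\mathbf{1}$, and choose $\theta_{d+1} = 1/q - 1 > 0$ and $p_{d+1}=1$, so that with $\tilde\theta_j := \theta_j q$ for $1 \leq j \leq d$ and $\tilde\theta_{d+1} := 1-q$ the weights again sum to one. Theorem~\ref{qwm_bb} applied to the extended $(d+1)$-tuple then produces $\psi_1,\dots,\psi_{d+1}$; the bound on $\int\psi_{d+1}\,d\mu$ together with $\psi_{d+1} = \prod_{j\leq d}\psi_j^{\theta_j q'}$ yields the normalisation $\|\prod_j\psi_j^{\theta_j}\|_{q'} \geq B^{q/q'}$, and a final rescaling gives the statement. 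This is a two-line trick once noticed. Your proposal instead seeks a single extremal weight $G_0$ with $\|G_0\|_{q'}=1$ for which the weighted $L^1$ bound holds \emph{uniformly} in $(f_j)$, and then invokes Case~II. That is a legitimate strategy in spirit, but it is not carried out: the Ky Fan minimax step is asserted rather than proved (one needs to identify the right compact convex set, and the requisite lower semicontinuity and coercivity, neither of which is automatic for the constraint $\int G^{q'}\,d\mu \leq 1$ with $q'<0$), and there is an unaddressed positivity issue — if $G_0$ produced by the minimax vanishes on a set of positive measure, then $g_j := G_0\phi_j$ vanishes there too, and $\|\prod_j g_j^{\theta_j}\|_{q'}$ collapses to $0$ rather than $1$ because $0^{q'} = +\infty$. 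The paper's dummy-operator device sidesteps all of this. I'd recommend replacing your Case III with the reduction to Case~I via the auxiliary operator $T_{d+1}$.
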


Note that Theorem \ref{thm:lq_positive} in the special case $p_j=1$ for all $j$ is precisely Theorem \ref{thmmainbaby}.

\begin{proof}
We begin with Case II. Suppose that 
\begin{equation*}
\left\|\prod_{j=1}^d (T_j f_j)^{p_j \theta_j}\right\|_{L^q(X)}
\leq B \prod_{j=1}^d \Big\|f_j\Big\|_{\mathcal{Y}_j}^{p_j \theta_j}
\end{equation*}
for all nonnegative $f_j \in \mathcal{Y}_j$, $1 \leq j \leq d$. Then, for all nonnegative
$G \in L^{q'}(X)$ with $\|G\|_{L^{q'}}=1$, we have
\begin{equation*}
\int_X \prod_{j=1}^d (T_jf_j(x))^{p_j \theta_j} G \, {\rm d} \mu(x)
\leq \Big\| \prod_{j=1}^d (T_jf_j)^{p_j \theta_j}\Big\|_q 
\leq B \prod_{j=1}^d \Big\|f_j\Big\|_{\mathcal{Y}_j}^{p_j \theta_j}.
\end{equation*}
It is easy to see that
if $T_j$ saturates $X$ with respect to the measure ${\rm d} \mu$, then it also does so with respect to $G \, {\rm d}\mu$. Moreover, the measure $G \, {\rm d} \mu$ is $\sigma$-finite. 
Therefore, by Theorem~\ref{qwm_bb} 
applied with the measure $G\, {\rm d} \mu$ in place of ${\rm d} \mu$, there are nonnegative measurable
functions $\gamma_j$ such that 
\begin{equation*}
1 \leq \prod_{j=1}^d \gamma_j(x)^{\theta_j} \qquad G \, {\rm d} \mu \mbox{-a.e. on $X$,}
\end{equation*}
and such that for each $j$, 
\begin{equation*}
\left(\int_X |T_jf_j(x)|^{p_j} \gamma_j(x) G(x){\rm d}\mu(x)\right)^{1/p_j} \leq  B^{1/p_j}C_{p_j}(\mathcal{Y}_j) \|f_j\|_{\mathcal{Y}_j}
\end{equation*}
for all $f_j \in \mathcal{Y}_j$. Setting $g_j = \gamma_j G$ gives the
desired conclusion.

\bigskip
\noindent
Now we turn to Case III.
 The main hypothesis \eqref{abc_1234} is that
  $$ \int_X \prod_{j=1}^d (T_jf_j)^{p_j \theta_j q} {\rm d} \mu
  \leq B^q \prod_{j=1}^d \Big\|f_j\Big\|_{\mathcal{Y}_j}^{p_j \theta_j q}$$
  for all nonnegative $f_j \in \mathcal{Y}_j$, $1 \leq j \leq d$.

\medskip
We introduce a new one-dimensional normed lattice $\mathcal{Y}_{d+1}$ with a nonnegative element $y$ of unit norm. Let $T_{d+1}: \mathcal{Y}_{d+1} \to \mathcal{M}(X)$ be given by
$\lambda y \to \lambda {\bf{1}}$ where ${\bf{1}}$ denotes the constant function taking the value $1$ on $X$.

\medskip
Then we have
 $$ \int_X \prod_{j=1}^{d+1} (T_jf_j)^{p_j \theta_j q} {\rm d} \mu
  \leq B^q \prod_{j=1}^{d+1} \Big\|f_j\Big\|_{\mathcal{Y}_j}^{p_j \theta_j q}$$
  for all $f_j \in \mathcal{Y}_j$, $1 \leq j \leq d+1$, where the exponents
  $\theta_{d+1}>0$ and $p_{d+1} > 0$ are at our disposal. 

\medskip
We shall want to impose the condition $\theta_{d+1} = 1/q -1 > 0$ because, with 
$\tilde{\theta}_j := \theta_j q$, we then have $ \sum_{j=1}^{d+1} \tilde{\theta}_j=1$ and
$$\int_X \prod_{j=1}^{d+1} (T_jf_j)^{p_j \tilde{\theta}_j} {\rm d} \mu
  \leq B^q\prod_{j=1}^{d+1} \Big\|f_j\Big\|_{\mathcal{Y}_j}^{p_j \tilde{\theta}_j}$$
  for all $f_j \in \mathcal{Y}_j$, $1 \leq j \leq d+1$.

\medskip
By Theorem~\ref{qwm_bb} we therefore have that there exist $\psi_j$, $1 \leq j \leq d+1$, such that $$ \prod_{j=1}^{d+1} \psi_j(x)^{\tilde{\theta}_j} = 1$$
almost everywhere, and 
$$ \left(\int_X |T_jf_j(x)|^{p_j} \psi_j(x){\rm d}\mu(x)\right)^{1/p_j} \leq B^{q/p_j}C_{p_j}(\mathcal{Y}_j) \|f_j\|_{\mathcal{Y}_j}
$$
for all $f_j \in \mathcal{Y}_j$, $1 \leq j \leq d+1$.

\medskip
The case $j = d+1$ of this last inequality tells us that (if we choose $p_{d+1} = 1$)
$$\int_X \psi_{d+1}(x){\rm d}\mu(x) \leq B^q$$
and, since by the previous equality we have
$$ \psi_{d+1}(x) = \prod_{j=1}^d\psi_{j}(x)^{-\tilde{\theta}_j/\tilde{\theta}_{d+1}} = 
\prod_{j=1}^d\psi_{j}(x)^{-{\theta}_j/{\theta}_{d+1}}
= \prod_{j=1}^d\psi_{j}(x)^{{\theta}_j q'},$$
it gives 
$$ \| \prod_{j=1}^d\psi_{j}(x)^{{\theta}_j}\|_{q'} \geq B^{q/q'}.$$ 
If we now set $g_j = B^{-q/q'}\psi_j$ for $1 \leq j \leq d$  we obtain
$$ \| \prod_{j=1}^d g_{j}(x)^{{\theta}_j}\|_{q'} \geq 1$$ 
and 
$$ \left(\int_X |T_jf_j(x)|^{p_j} g_j(x){\rm d}\mu(x)\right)^{1/p_j} \leq B^{1/p_j} C_{p_j}(\mathcal{Y}_j)\|f_j\|_{\mathcal{Y}_j}$$
for all $1 \leq j \leq d$, and for all $f_j \in \mathcal{Y}_j$.
\end{proof}

\subsection{General linear operators} 
Next we turn to general linear operators, and state a result which in particular contains Theorem~\ref{lin_q}. The proof follows exactly the same arguments as in Theorem~\ref{thm:lq_positive}, with the exception that the application of Theorem~\ref{qwm_bb} there is now replaced by that of Theorem~\ref{qwm_ba}. (We also need for Case III to observe that the one-dimensional normed space $\mathcal{Y}_{d+1}$ which we introduce has Rademacher-type strictly greater than $1$ -- indeed it has Rademacher-type $2$ with constant $1$ as we noted earlier.) We leave the remaining details to the reader. 

\begin{thm}
\label{thm:lq_linear} 
Let $X$ be a $\sigma$-finite measure space and $\mathcal{Y}_j$ quasi-normed spaces. Let $T_j:\mathcal{Y}_j\to \cm(X)$ be linear operators. Suppose that the linear
operators $T_j$ saturate $X$. Let $ 0 <  p_j \leq 2$ and $\sum_{j=1}^d \theta_j = 1$. Assume that for some $0 < q \leq \infty$ we have
\begin{equation*}
\|\prod_{j=1}^d |T_jf_j|^{p_j \theta_j }\|_{L^q({\rm d} \mu)}
\leq B \prod_{j=1}^d \Big\|f_j\Big\|_{\mathcal{Y}_j}^{p_j\theta_j}
\end{equation*}

for all $f_j$ in $\mathcal{Y}_j$, $1 \leq j \leq d$. 

\medskip
Suppose moreover that each space $\mathcal{Y}_j$ has Rademacher-type $r_j =2$
for those $j$ with $p_j =2$, and has Rademacher-type $r_j>p_j$
for those $j$ with $p_j <2$.

\medskip
\begin{enumerate}
\item[{\bf Case I.}] {\bf (Disentanglement).} $q = 1$. See Theorem~\ref{qwm_ba}. 

\medskip
\item[{\bf Case II.}] {\bf (Multilinear Duality).} If $q >1$, then for every nonnegative $G \in L^{q'}(X)$ there exist nonnegative measurable functions $g_j$ on $X$ such that 
$$ G(x) \leq \prod_{j=1}^d g_j(x)^{\theta_j}$$
almost everywhere, and such that 
$$
 \left(\int_X |T_jf_j(x)|^{p_j} g_j(x) {\rm d}\mu(x)\right)^{1/p_j}
  \lesssim_{\{\theta_j, p_j, r_j\}} B^{1/p_j}R_{r_j}(\mathcal{Y}_j)\|G\|_{q'}\|f_j\|_{\mathcal{Y}_j}
$$
for all $f_j \in \mathcal{Y}_j$.

\bigskip
\item[{\bf Case III.}] {\bf (Multilinear Maurey Factorisation).} If $0 < q < 1$ then there exist nonnegative measurable functions $g_j$ on $X$ such that 
$$ \| \prod_{j=1}^d g_j(x)^{\theta_j}\|_{q'} = 1$$
and such that 
$$
 \left(\int_X |T_jf_j(x)|^{p_j} g_j(x) {\rm d}\mu(x)\right)^{1/p_j}
  \lesssim_{\{\theta_j, p_j, r_j\}}B^{1/p_j}R_{r_j}(\mathcal{Y}_j)\|f_j\|_{\mathcal{Y}_j}
$$
for all $f_j \in \mathcal{Y}_j$.

\end{enumerate}
\end{thm}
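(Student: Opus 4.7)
The plan is to run exactly the same reduction as in the positive case (Theorem~\ref{thm:lq_positive}), replacing every invocation of the disentanglement engine Theorem~\ref{qwm_bb} by its general-linear counterpart Theorem~\ref{qwm_ba}. Case I is literally Theorem~\ref{qwm_ba}; the content lies in Cases II and III.

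For Case II, I would fix a nonnegative $G \in L^{q'}(X)$ with $\|G\|_{q'} = 1$ and pair the $L^q$-hypothesis with $G$ via H\"older's inequality to obtain
\begin{equation*}
\int_X \prod_{j=1}^d |T_jf_j(x)|^{p_j \theta_j} G(x) \, {\rm d}\mu(x) \leq B \prod_{j=1}^d \|f_j\|_{\mathcal{Y}_j}^{p_j \theta_j}.
\end{equation*}
This is the hypothesis of Theorem~\ref{qwm_ba} relative to the weighted measure $G \, {\rm d}\mu$, which is still $\sigma$-finite; saturation of each $T_j$ transfers since any set of positive $G \, {\rm d}\mu$-measure sits inside $\{G > 0\}$ with positive $\mu$-measure there, where the original saturation applies. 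Theorem~\ref{qwm_ba} then supplies nonnegative $\gamma_j$ with $\prod_j \gamma_j^{\theta_j} \geq 1$ a.e.\ on $\{G > 0\}$ and the weighted bounds $(\int_X |T_jf_j|^{p_j} \gamma_j G \, {\rm d}\mu)^{1/p_j} \lesssim_{\{\theta_j, p_j, r_j\}} B^{1/p_j} R_{r_j}(\mathcal{Y}_j) \|f_j\|_{\mathcal{Y}_j}$. Setting $g_j := \gamma_j G$ yields $\prod_j g_j^{\theta_j} \geq G$ a.e.\ and the desired weighted operator bound; homogeneity in $G$ furnishes the general $L^{q'}$-norm factor.

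For Case III I would imitate the phantom-operator trick from the positive proof. Raise the hypothesis to the $q$-th power and introduce an auxiliary one-dimensional normed space $\mathcal{Y}_{d+1} := \mathbb{R}$ with $T_{d+1}\lambda := \lambda \mathbf{1}_X$ and exponent $p_{d+1} := 1$. With $\tilde{\theta}_j := q\theta_j$ for $j \leq d$ and $\tilde{\theta}_{d+1} := 1 - q > 0$, so that $\sum_{j=1}^{d+1} \tilde{\theta}_j = 1$, the upgraded hypothesis reads
\begin{equation*}
\int_X \prod_{j=1}^{d+1} |T_j f_j(x)|^{p_j \tilde{\theta}_j} \, {\rm d}\mu(x) \leq B^q \prod_{j=1}^{d+1} \|f_j\|_{\mathcal{Y}_j}^{p_j \tilde{\theta}_j}.
\end{equation*}
The Rademacher-type hypothesis of Theorem~\ref{qwm_ba} is inherited for the new index since $\mathcal{Y}_{d+1} = \mathbb{R}$, being Hilbert, has Rademacher-type $2$ with constant $1$, which strictly exceeds $p_{d+1} = 1$. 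Applying Theorem~\ref{qwm_ba} produces $\psi_1, \dots, \psi_{d+1}$ with $\prod_j \psi_j^{\tilde{\theta}_j} \geq 1$ a.e.\ and individual weighted bounds. Testing the $(d+1)$-st bound at $f_{d+1} = 1$ gives $\int_X \psi_{d+1} \, {\rm d}\mu \lesssim B^q$, while the pointwise constraint forces $\psi_{d+1} \geq \prod_{j=1}^d \psi_j^{\theta_j q'}$ (using the identity $-\tilde{\theta}_j / \tilde{\theta}_{d+1} = \theta_j q'$). Combined these yield $\|\prod_{j=1}^d \psi_j^{\theta_j}\|_{q'} \gtrsim B^{q-1}$. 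A final rescaling $g_j := c \psi_j$ with $c := \|\prod_j \psi_j^{\theta_j}\|_{q'}^{-1}$ normalises $\|\prod_j g_j^{\theta_j}\|_{q'} = 1$, and the exponent arithmetic $c^{1/p_j} \lesssim B^{(1-q)/p_j}$ combined with the factor $B^{q/p_j}$ surviving from the $\psi_j$-bounds collapses cleanly to $B^{1/p_j}$, producing the claimed weighted operator estimate.

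No serious obstacle is anticipated, since the argument is skeletally identical to that of Theorem~\ref{thm:lq_positive}; the only non-trivial verifications are the saturation transfer to the weighted measure $G \, {\rm d}\mu$ in Case II and the observation in Case III that the phantom factor $\mathcal{Y}_{d+1} = \mathbb{R}$ satisfies the strict Rademacher-type condition $r_{d+1} > p_{d+1}$ required by Theorem~\ref{qwm_ba}. Both are immediate as indicated above.
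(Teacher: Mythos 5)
Your proof is correct and coincides with the route the paper takes: it reproduces the Case II and Case III arguments of Theorem~\ref{thm:lq_positive} verbatim, substituting Theorem~\ref{qwm_ba} for Theorem~\ref{qwm_bb}, and you supply exactly the two verifications the paper flags as the only points requiring care (saturation passing to the weighted measure $G\,{\rm d}\mu$, and the phantom space $\mathcal{Y}_{d+1}=\mathbb{R}$ having Rademacher-type $2>1=p_{d+1}$). The exponent bookkeeping in Case III ($-\tilde\theta_j/\tilde\theta_{d+1}=\theta_j q'$, $q/q'=q-1$, and the cancellation to $B^{1/p_j}$) is accurate.
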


There are further extensions to Case II in both Theorems~\ref{thm:lq_positive} and \ref{thm:lq_linear} when we replace the role of $L^q$ for $q > 1$ by K\"othe function spaces as in \cite{CHV1}. We leave the details to the interested reader.

\section{Appendix: Why certain conditions are needed}\label{appendix}
At various points in the development of our results we have imposed conditions whose necessity might not be immediately obvious. For example, in the Basic Question we imposed the homogeneity condition \eqref{homog}; in Theorems~\ref{answer:qwm_0} and \ref{answer:general_operators} we imposed upper bounds on the exponents $p_j$; and in Theorem~\ref{qwm_bb} we imposed $p_j$-convexity on the lattices $\mathcal{Y}_j$. In this final section we establish that, in all these cases, the conditions we impose are indeed needed
in order for our results to have a sufficiently broad scope so as to include certain natural examples.

\subsection{Condition \eqref{homog} in the Basic Question}
We first want to clarify to what extent condition \eqref{homog} is needed in the formulation of the Basic Question. 

\begin{prop}\label{homog_nec}
Fix $r_j \geq 1$ and $\gamma_j > 0$ for $1 \leq j \leq d$. Suppose that $(p_j)$
is such that whenever $T_j: L^{r_j}(\mathbb{R}) \to \mathcal{M}(\mathbb{R}^d)$ are positive linear operators such that
\begin{equation}\label{yanxxx}
\int_{\mathbb{R}^d} \prod_{j=1}^d |T_j f_j(x)|^{\gamma_j} {\rm d} x \lesssim \prod_{j=1}^d \|f_j \|_{L^{r_j}(\mathbb{R})}^{\gamma_j}
\end{equation}
holds, then there exists $(\phi_j)$ such that 
\begin{equation}\label{yan111}
\prod_{j=1}^d \phi_j(x)^{\gamma_j/p_j} \geq 1,
\end{equation}
and 
\begin{equation}\label{yan222}
  \left(\int_{\mathbb{R}^d} |T_j f_j(x)|^{p_j} \phi_j(x) {\rm d}x \right)^{1/p_j} \lesssim\|f_j\|_{L^{r_j}(\mathbb{R})}
 \end{equation}
hold. Then $(p_j)$ must necessarily satisfy 
$$ \sum_{j=1}^d \frac{\gamma_j}{p_j} = 1.$$ 
\end{prop}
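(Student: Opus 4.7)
My plan is to deduce the homogeneity condition via a consistency argument modelled on the proof of Lemma \ref{prelim}. First, I observe that the converse of the proposition is exactly Lemma \ref{prelim}: given nonnegative $\phi_j$ satisfying \eqref{yan111} and \eqref{yan222} together with \eqref{homog}, the inequality \eqref{yanxxx} follows. The key step there is an application of H\"older's inequality with exponents $\sigma_j = p_j/\gamma_j$, which is valid precisely when $\sum_j 1/\sigma_j = \sum_j \gamma_j/p_j = 1$. This strongly suggests that $\sum_j \gamma_j/p_j = 1$ is the natural algebraic matching, and my task is to show it is also necessary.

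\smallskip

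To establish necessity, I would argue by contradiction: assuming $\sum_j \gamma_j/p_j \neq 1$, I would exhibit positive linear operators $(T_j)$ satisfying \eqref{yanxxx} for which no $\phi_j$'s satisfying \eqref{yan111} and \eqref{yan222} can exist. A natural family to test is $T_j f(x) := f(x_j)$ on $X = \mathbb{R}^d$, with $f \in L^{r_j}(\mathbb{R})$, working in dimension $d \geq 2$ (the case $d = 1$ reduces to standard duality). By Fubini, $\int_{\mathbb{R}^d}\prod_j |f_j(x_j)|^{\gamma_j}\,dx = \prod_j \|f_j\|_{L^{\gamma_j}(\mathbb{R})}^{\gamma_j}$, so taking $r_j = \gamma_j$ makes \eqref{yanxxx} hold with equality and implicit constant $1$. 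For such operators, \eqref{yan222} translates by duality into $\Phi_j(t) := \int \phi_j(t, x_{-j})\,dx_{-j}$ belonging to $L^{r_j/(r_j-p_j)}(\mathbb{R})$, while \eqref{yan111} is a pointwise lower bound on the whole of $\mathbb{R}^d$. A dilation in each coordinate together with scalar rescaling $\phi_j \mapsto c_j \phi_j(\lambda\,\cdot)$ produces a two-parameter family of candidate weights; demanding consistency (i.e., that the dilated weights again satisfy \eqref{yan111} and \eqref{yan222} with uniformly controlled implicit constants) yields a system of scaling exponents whose unique solution enforces $\sum_j \gamma_j/p_j = 1$.

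\smallskip

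The principal obstacle will be to rule out pathological weights $\phi_j$ that do not respect the dilation structure of the example. I plan to resolve this by exploiting the infinite measure of $\mathbb{R}^d$: the constraint \eqref{yan111} forces $\int_{\mathbb{R}^d} \prod_j \phi_j^{\gamma_j/p_j}\,dx = +\infty$, while H\"older's inequality with the only exponents compatible with $\Phi_j \in L^{r_j/(r_j-p_j)}$ produces a finite bound exactly in the case $\sum_j \gamma_j/p_j = 1$. In the regime $\sum_j \gamma_j/p_j > 1$, no H\"older exponents are feasible and the natural argument breaks down; in the regime $\sum_j \gamma_j/p_j < 1$, the modified H\"older step would formally introduce a factor $\mu(X)^{1-\sum_j \gamma_j/p_j}$, which is infinite on $\mathbb{R}^d$ and hence unusable. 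Completing the bookkeeping in these two regimes, and concluding that the only exponent configuration that admits any $\phi_j$ at all is $\sum_j \gamma_j/p_j = 1$, is the main technical task.
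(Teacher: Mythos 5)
Your observation that the converse is exactly Lemma~\ref{prelim}, and that the natural consistency constraint is the H\"older matching $\sum_j \gamma_j/p_j=1$, is on the right track. However, the proposal has two genuine gaps.

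First, your test operators $T_jf(x)=f(x_j)$ only satisfy \eqref{yanxxx} in the diagonal case $r_j=\gamma_j$, as you note yourself. But the proposition fixes $r_j$ and $\gamma_j$ independently; on $\mathbb{R}$ there is no inclusion $L^{r_j}\hookrightarrow L^{\gamma_j}$ when $r_j\neq\gamma_j$, so your construction cannot even produce operators meeting the hypothesis of the proposition in the general case. The paper instead uses a rank-one tensor operator $T_jf(s)=\bigl(\int_{\mathbb{R}} fg_j\bigr)\Phi_j(s)$ with $g_j\in L^{r_j'}$ strictly positive and $\Phi_j\in L^{\gamma_j}\setminus\bigcup_{\beta\neq\gamma_j}L^{\beta}$, extended coordinatewise to $\mathbb{R}^d$. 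The factor $g_j$ decouples the domain norm $L^{r_j}$ from the output profile $\Phi_j$, handling arbitrary pairs $(r_j,\gamma_j)$; and the crucial choice that $\Phi_j$ belong to $L^{\gamma_j}$ and to no other Lebesgue space is what makes the exponent $\gamma_j$ rigid.

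Second, your dilation/infinite-measure argument is only a sketch and I do not see that it closes. You say the main bookkeeping task remains, and I do not think the sketched version would complete cleanly: from the hypothesis you only know some $\phi_j$ exist, not that they respect any dilation structure, and the observation that $\int\prod\phi_j^{\gamma_j/p_j}=\infty$ by itself does not contradict anything. The paper's way past this is much shorter and avoids any scaling analysis: writing $\lambda=\sum_j\gamma_j/p_j$, condition \eqref{yan111} gives $\prod_j\phi_j^{(\gamma_j/\lambda)/p_j}\geq 1$ (since $\lambda>0$), and $\sum_j(\gamma_j/\lambda)/p_j=1$, so Lemma~\ref{prelim} applied with the rescaled exponents $\tilde\gamma_j=\gamma_j/\lambda$ yields \eqref{yanxxx} with exponents $(\gamma_j/\lambda)$. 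For the rank-one operators above this left-hand side is $+\infty$ whenever any $\gamma_j/\lambda\neq\gamma_j$, forcing $\lambda=1$. You were circling the right ingredient (reapplying Lemma~\ref{prelim}), but without the rescaled-exponent trick and without the $\Phi_j\in L^{\gamma_j}$-only construction, the argument does not go through.
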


\begin{proof}
Let $\Phi_j \in L^{\gamma_j}(\mathbb{R}) \setminus \bigcup_{\beta_j \neq \gamma_j}L^{\beta_j}(\mathbb{R})$ and $g_j \in L^{r_j'}(\mathbb{R})$ be nonzero and strictly positive. Let 
$ T_j:L^{r_j}(\mathbb{R}) \to L^{\gamma_j}(\mathbb{R})$ be 
given by 
$$T_j f(s) = \left(\int_\mathbb{R} f g_j\right) \Phi_j(s).$$
Extend $T_j$ to 
$T_j: L^{r_j}(\mathbb{R}) \to \mathcal{M}(\mathbb{R}^d)$
by defining 
$$(T_jf)(x_1, \dots , x_d) := T_jf(x_j).$$ 
Then \eqref{yanxxx} holds with exponents $(\gamma_j)$, but if we replace any $\gamma_j$ by any other exponent, its left-hand side becomes infinite for all nontrivial nonnegative $f_j \in L^{r_j}(\mathbb{R})$. 

\medskip
\noindent
By hypothesis, $(p_j)$ is such that there exists $(\phi_j)$ satisfying
\eqref{yan111} and \eqref{yan222} for this particular $(T_j)$. Let 
$\lambda = \sum_{j=1}^d \gamma_j/p_j$.
Then 
\eqref{yan111} gives 
$$
\prod_{j=1}^d \phi_j(x)^{\gamma_j/\lambda p_j} \geq 1,
$$ 
and so by Lemma~\ref{prelim} we can conclude that 
$$
\int \prod_{j=1}^d |T_j f_j(x)|^{\gamma_j/\lambda} {\rm d} \mu(x) \lesssim \prod_{j=1}^d \|f_j \|_{L^{r_j}}^{\gamma_j/\lambda};$$
that is, \eqref{yanxxx} holds also with exponents $(\gamma_j/\lambda)$ in place of $(\gamma_j)$ for this $(T_j)$. This is a contradiction to what we observed above, unless $\lambda = 1$.  
\end{proof}

\subsection{Sharpness of the exponents in Theorems~\ref{answer:qwm_0} and \ref{answer:general_operators}}
As a preliminary observation, we note that the next two lemmas can be used to demonstrate the sharpness of the exponents arising in the classical  Maurey--Nikishin--Stein theory of factorisation of linear operators.
\begin{lemma}\label{ocho_uno}
For each $1\leq r\leq\infty$ and $0<\gamma<\infty$ we can construct a positive translation-invariant bounded linear operator $T:L^r(\mathbb{G})\to L^\gamma(\mathbb{G})$ (where $\mathbb{G} = \mathbb{T}$ or $\mathbb{R}$ with Haar measure) such that $$\{ 0 < p < \infty : \mbox{ for some nontrivial } \phi, \, T: L^r \to L^p(\phi) \mbox{ boundedly}\} = I_{r, \gamma} := (0, \max\{\gamma, r\}].$$
\end{lemma}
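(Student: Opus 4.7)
The plan is to realise $T$ as convolution on $\mathbb{G}=\mathbb{T}$ (the circle with normalised Haar measure) against a carefully tuned positive singular kernel. Set $\alpha := \min\bigl(1,\,1+\tfrac{1}{\gamma}-\tfrac{1}{r}\bigr)$, which always lies in $(0,1]$, and fix any $\beta > \alpha$. Take
$$K(t) \;:=\; |t|^{-\alpha}\bigl(\log(1/|t|)\bigr)^{-\beta}\,\mathbf{1}_{\{|t|<1/2\}}(t),\qquad Tf := f * K.$$
Then $T$ is manifestly linear, positive, and translation invariant. The choice $\beta>\alpha$ places $K$ exactly on the boundary: $K\in L^{1/\alpha}(\mathbb{T})$ but $K\notin L^{1/\alpha+\epsilon}(\mathbb{T})$ for any $\epsilon>0$. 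To see $T:L^r\to L^\gamma$ is bounded: when $\gamma\geq r$, Young's inequality with the exponent relation $1+\tfrac{1}{\gamma}=\tfrac{1}{r}+\alpha$ yields $\|Tf\|_\gamma \leq \|K\|_{1/\alpha}\|f\|_r$; when $\gamma<r$ one has $\alpha=1$, $K\in L^1(\mathbb{T})$, so Young gives $T:L^r\to L^r$ and the finite-measure inclusion $L^r\subset L^\gamma$ finishes the job.

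Write $p^*:=\max\{r,\gamma\}$. The preceding argument in fact produces a bounded map $T:L^r\to L^{p^*}$. This gives the forward direction $I_{r,\gamma}\subseteq\{p : T:L^r\to L^p(\phi)\text{ bounded for some nontrivial }\phi\}$: for any $p\in(0,p^*]$ choose the nontrivial weight $\phi\equiv 1$ and apply H\"older's inequality on $\mathbb{T}$ to obtain $\|Tf\|_p \leq \|Tf\|_{p^*} \lesssim \|f\|_r$.

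The sharpness -- that no $p>p^*$ belongs to this set for \emph{any} nontrivial $\phi$ -- is the main obstacle. Given nontrivial $\phi\geq 0$, choose $c>0$ such that $A:=\{\phi\geq c\}$ has positive measure, and, by Lebesgue's density theorem, pick a density point $y\in A$. Define the test functions
$$f_n(x):=n\,\mathbf{1}_{[y-1/(2n),\,y+1/(2n)]}(x),\qquad \|f_n\|_r = n^{1-1/r}.$$
Translation invariance lets us analyse $Tf_n(\cdot+y)$ at the origin. Splitting the convolution into a near-field contribution ($|x-y|\lesssim 1/n$) and a far-field contribution ($|x-y|\gtrsim 1/n$) gives
$$Tf_n(x)\asymp\begin{cases} n^{\alpha}(\log n)^{-\beta} & \text{if } |x-y|\lesssim 1/n,\\ K(x-y) & \text{if } |x-y|\gtrsim 1/n,\end{cases}$$
with an obvious logarithmic correction in the degenerate case $\alpha=1$. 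Using that $y$ is a density point of $A$, both the near-field and the far-field integrations against $\mathbf{1}_A$ produce $\int_A |Tf_n|^p\,dx \gtrsim n^{\alpha p-1}(\log n)^{-O(1)}$. Consequently
$$\frac{\|Tf_n\|_{L^p(\phi)}}{\|f_n\|_r}\;\gtrsim\; c^{1/p}\,n^{\alpha-1/p-1+1/r}(\log n)^{-O(1)}.$$
A direct substitution shows the exponent of $n$ equals $\tfrac{1}{\gamma}-\tfrac{1}{p}$ when $\gamma\geq r$ and $\tfrac{1}{r}-\tfrac{1}{p}$ when $\gamma<r$; in either case it is strictly positive precisely when $p>p^*$. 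Hence the ratio tends to infinity, contradicting boundedness of $T:L^r\to L^p(\phi)$. The expected technical obstacle is the bookkeeping in this final step: matching near- and far-field asymptotics so that the two scales balance exactly at $p=p^*$, and arranging the test functions via translation invariance to handle an arbitrary (and possibly very wild) nontrivial $\phi$.
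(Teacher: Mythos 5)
Your proof is correct, and it fleshes out what the paper leaves as a one-line pointer. The paper's own ``proof'' merely records that the result is well-known, names the operator ($T=I$ when $\gamma\le r$, a fractional integral -- with a logarithmic variant at $r=1$ -- when $\gamma>r$), and implicitly relies on the averaging argument it states for the companion Lemma~\ref{ocho_dos}: for a translation-invariant operator on $\mathbb{T}$, boundedness $L^r\to L^p(\phi)$ for some nontrivial $\phi$ is equivalent to unweighted boundedness $L^r\to L^p$, after which the sharpness of the exponent range is a classical unweighted fact. Your route is the same in spirit -- a positive convolution operator on $\mathbb{T}$ -- but you unify the two cases with one log-corrected kernel: the choice $\alpha=\min\{1,\,1+1/\gamma-1/r\}$ and $\beta>\alpha$ pins $K$ to $L^{1/\alpha}(\mathbb{T})$ and no better, makes the endpoint Young bound work uniformly (the log factor is exactly the ``slight variant'' the paper alludes to at $r=1$), and absorbs the $\gamma<r$ case, where the paper just takes the identity, via $L^1$-convolution plus the finite measure of $\mathbb{T}$. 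For the sharpness you bypass the averaging reduction entirely: test functions concentrated at a Lebesgue density point of $\{\phi\ge c\}$ give a near-field lower bound on $Tf_n$, and the exponent bookkeeping shows the ratio $\|Tf_n\|_{L^p(\phi)}/\|f_n\|_r$ diverges polynomially (beating the log factors) precisely for $p>\max\{r,\gamma\}$; I have checked the exponent arithmetic and it is right in both regimes, including $r=1$ and $r=\infty$. Both routes are sound: the averaging argument is shorter and reduces immediately to a known statement, while your density-point computation is self-contained, makes the dependence on the weight explicit, and serves as a concrete check that the Young-inequality range cannot be improved for this kernel.
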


This is well-known. When $\gamma \leq r$ we take $T = I$ and when $\gamma > r$ we take $T$ to be a fractional integral operator (or slight variant thereof when $r=1$). 

\medskip
\noindent
 We next consider general operators.

\begin{lemma}\label{ocho_dos}
For each $1\leq r<\infty$ and $0<\gamma<\infty$ we can construct a translation-invariant bounded linear operator  $T:L^r(\mathbb{G})\to L^\gamma(\mathbb{G})$ (where $\mathbb{G} = \mathbb{T}$ or $\mathbb{R}$ with Haar measure) such that 
$$\{ 0 < p < \infty : \mbox{ for some nontrivial } \phi, \, T: L^r \to L^p(\phi) \mbox{ boundedly}\}$$
$$ = J_{r, \gamma} := 
\begin{cases}
(0, \gamma] &\text{ when } 2\leq \gamma <r \text{ or } \gamma \geq r \\
(0,2] &\text{ when } \gamma<2\leq r \\
(0,r) &\text{ when }  \gamma<r<2.
\end{cases}
$$
\end{lemma}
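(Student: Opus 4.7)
The strategy is constructive: in each of the three regimes for $(r, \gamma)$ I would exhibit a translation-invariant operator $T$ whose admissible set of weighted exponents $p$ is exactly $J_{r, \gamma}$. The direction $J_{r, \gamma} \subseteq \{\text{admissible } p\}$ is a general fact: Hölder applied to any $T: L^r \to L^\gamma$ yields every $p \in (0, \gamma]$ (take $\phi$ with $\|\phi\|_{(\gamma/p)'}$ finite), and the classical linear Maurey--Nikishin factorisation theorem yields $p \leq 2$ when $r \geq 2$ and $p < r$ when $r < 2$, reflecting the Rademacher-type of $L^r$. So the real work is the matching upper bound.

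A unifying technical device in the sharpness arguments is a translation-averaging trick: if $T$ is translation-invariant and $\int |Tf|^p \phi \leq A\|f\|_r^p$ holds for a nontrivial nonnegative $\phi$, then applying the inequality to the shifts $\tau_y f$ and integrating $y$ against a compactly supported truncation $\phi_0$ of $\phi$ yields an unweighted bound of the form $\|Tf\|_{L^p(I)} \leq C \|f\|_r$ on every unit interval $I$ (on $\mathbb{G} = \mathbb{T}$ this becomes a global $L^p$ bound; on $\mathbb{G} = \mathbb{R}$ it is a uniform local one, which is sufficient). Hence it is enough to construct $T: L^r \to L^\gamma$ that does not extend to a (locally) bounded map $L^r \to L^p$ for any $p$ exceeding the asserted threshold.

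In the regime $\gamma \geq \min(r, 2)$, I would take $T$ on $\mathbb{T}$ to be an operator whose output exhibits a sharp-$L^\gamma$ singularity, typically of the form \emph{convolution with a kernel $K \in L^\gamma \setminus \bigcup_{p > \gamma} L^p$ post-composed with a Paley projection onto a lacunary spectrum}, the post-composition preventing the operator from gaining integrability through Young's inequality. In the regimes $\gamma < 2$ the sharp threshold on $p$ comes instead from the Rademacher-type of $L^r$, and I would employ a randomised lacunary Fourier multiplier on $\mathbb{T}$,
$$
T f(x) = \sum_k \xi_k \, \lambda_k \, \hat{f}(n_k) \, e^{2\pi i n_k x},
$$
where $(n_k)$ is lacunary, $(\lambda_k)$ are deterministic weights, and $(\xi_k)$ is a fixed realisation of independent Rademacher variables (for $\gamma < 2 \leq r$) or of independent symmetric $r$-stable variables (for $\gamma < r < 2$). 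Paley's theorem together with Khintchine's inequality (respectively the $p$-stable moment inequality) verifies boundedness $T: L^r \to L^\gamma$ for appropriately chosen $(\lambda_k)$, while sharpness at $p = 2$ (respectively strict $p < r$) follows by applying the translation-averaging trick to reduce to an unweighted bound and then testing it against lacunary Riesz-product inputs via the lower Khintchine (respectively lower $r$-stable) moment estimate. The strict failure at $p = r$ in the last regime is a direct consequence of $\mathbb{E}|\xi_k|^r = \infty$.

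The main obstacle is verifying sharpness in the regimes $\gamma < 2$: transplanting the abstract Maurey--Nikishin counterexample (traditionally formulated as $T: \ell^r \to L^0(\Omega)$) into a translation-invariant Fourier-analytic setting demands a careful coordination of the lacunary spectrum $(n_k)$ and the normalising weights $(\lambda_k)$ so that $T: L^r \to L^\gamma$ is sharply bounded while the weighted inequality provably breaks at the critical endpoint. The translation-averaging reduction is the key technical device that ties the three constructions together.
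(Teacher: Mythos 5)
The translation-averaging reduction from a weighted to an unweighted $L^r\to L^p$ bound for translation-invariant $T$ is indeed the same key device the paper uses (on $\mathbb{T}$, $T:L^r\to L^p(\phi)$ for some nontrivial $\phi$ if and only if $T:L^r\to L^p$), and your Hölder/Maurey observations for the inclusion $J_{r,\gamma}\subseteq\{\text{admissible }p\}$ are fine. However, the constructions you propose for the sharpness direction have a fundamental flaw: every one of your candidate operators has output supported on a \emph{lacunary} spectrum (the ``Paley projection onto a lacunary spectrum'' post-composed with a convolution in the $\gamma\geq\min(r,2)$ regime, and the randomised lacunary multiplier $Tf=\sum_k\xi_k\lambda_k\hat f(n_k)e^{2\pi in_kx}$ in the $\gamma<2$ regimes). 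For lacunary trigonometric series the $L^p$-norms for $0<p<\infty$ are all mutually equivalent --- precisely the Sidon/Khintchine phenomenon you invoke --- so $\|Tf\|_p\sim\|Tf\|_2$ with constants independent of $f$. Consequently a lacunary-valued operator is bounded from $L^r$ to $L^p$ for one $p\in(0,\infty)$ if and only if it is bounded for all such $p$, and the admissible set can only be $(0,\infty)$ or empty; it can never equal $J_{r,\gamma}$. The sharpness you aim to extract by testing lower Khintchine or $r$-stable moments on the input side is washed out by the same Khintchine equivalence on the output side.

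The paper's constructions are arranged precisely to avoid this pitfall, by making the output spectrum broad: for $2\leq\gamma<r$ (and for $\gamma<2<r$, taking $\gamma=2$) the operator is convolution with a weighted sum of modulated Rudin--Shapiro polynomials $\tilde P_m$, whose spectrum fills the whole block $[2^m,2^{m+1})$ rather than a lacunary set --- equivalently, one may invoke Bourgain's $\Lambda(p)$-set theorem; for $\gamma<r<2$ the paper invokes Zafran's weak-type multiplier, which is unbounded on $L^r$; for $\gamma<r=2$ the identity suffices; and for $\gamma\geq r$ one may fall back on the positive fractional-integral example of Lemma~\ref{ocho_uno}. To repair your approach you would need to replace the lacunary spectra by full dyadic blocks (as in Rudin--Shapiro) or by $\Lambda(\gamma)$-sets that are not $\Lambda(p)$ for any $p>\gamma$; the lacunary framework by itself cannot separate $L^\gamma$ from $L^p$ on the output.
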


This is also mostly well-known.  The exponents $\gamma \geq r$ are covered by Lemma~\ref{ocho_uno} (in which case we can take $\mathbb{G} = \mathbb{T}$ or $\mathbb{R}$ with Haar measure), so it remains to consider the exponents $\gamma < r$ (in which case we shall take $\mathbb{G} = \mathbb{T}$). Note that, by an averaging argument, for a translation-invariant operator on a compact abelian group, $T:L^r \to L^p(\phi)$ boundedly for a non-trivial weight $\phi$ if and only if $T:L^r \to L^p(\phi)$ boundedly for the weight $\phi=1$. Thus, 
\begin{equation*}
    \begin{split}
        &\{ 0 < p < \infty : \mbox{ for some nontrivial } \phi, \, T: L^r(\bt) \to L^p(\bt,\phi) \mbox{ boundedly}\}\\
        &=\{ 0 < p < \infty : T: L^r(\bt) \to L^p(\bt) \mbox{ boundedly}\}.
    \end{split}
\end{equation*}

\medskip
\noindent
When $r>2$ we shall also need the following result to assist us in establishing Lemma~\ref{ocho_dos} :
\begin{lem}\label{FTP_0}
Let $2 \leq \gamma < \infty$. Then there is a bounded translation-invariant linear operator $T: L^2(\mathbb{T}) \to L^\gamma(\mathbb{T})$, such that for no $p > \gamma$ is $T$ bounded from $L^\infty(\mathbb{T})$ to $L^p(\mathbb{T})$. 
\end{lem}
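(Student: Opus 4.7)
The plan is to construct $T$ as a randomised Fourier multiplier
\[
Tf(x) \;:=\; \sum_{n=1}^{\infty} \epsilon_n c_n \widehat{f}(n)\, e^{inx},
\]
where $(\epsilon_n)$ is a sequence of independent random signs fixed once and for all on some probability space, and $(c_n)$ is a carefully chosen deterministic decreasing sequence. The point of randomising the phases is that the upper $L^2 \to L^\gamma$ bound will depend only on $|c_n|$, while the lower bound against $L^\infty \to L^p$ will be arranged by choosing test functions that re-use the \emph{same} signs, so that $Tf_N$ becomes purely deterministic.

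For the upper bound I would argue by duality, reducing to $T^*\colon L^{\gamma'} \to L^2$ being bounded. Since $|\epsilon_n|=1$, $\|T^*g\|_2^2 = \sum_n c_n^2 |\widehat{g}(n)|^2$, and applying H\"older's inequality with the exponent pair $(\gamma/(\gamma-2),\gamma/2)$ followed by the Hausdorff--Young inequality (valid because $\gamma \geq 2$) gives
\[
\|T^*g\|_2 \;\leq\; \Bigl(\sum_n c_n^{2\gamma/(\gamma-2)}\Bigr)^{(\gamma-2)/(2\gamma)} \|g\|_{\gamma'}.
\]
To satisfy this while keeping $c_n$ as large as possible (which is crucial for the lower bound) I would take
\[
c_n \;:=\; n^{-(1/2 - 1/\gamma)}(\log(n+2))^{-\delta},
\]
with $\delta$ slightly larger than $(\gamma-2)/(2\gamma)$, so that $\sum_n c_n^{2\gamma/(\gamma-2)} = \sum_n n^{-1}(\log(n+2))^{-2\delta\gamma/(\gamma-2)}$ is just barely convergent. (In the degenerate case $\gamma = 2$ one simply takes $c_n \equiv 1$ and the Hausdorff--Young step collapses to Plancherel.)

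For the lower bound, fix $p > \gamma$ and choose $\beta \in (1/2,\; 1/2 + 1/\gamma - 1/p)$; this interval is nonempty precisely because $p > \gamma$. Consider the test functions $f_N(x) := \sum_{n=1}^{N} \epsilon_n n^{-\beta} e^{inx}$. Since $\sum n^{-2\beta} < \infty$ for $\beta > 1/2$, the Salem--Zygmund inequality together with Gaussian concentration and a Borel--Cantelli argument gives a single event of full probability on which $\|f_N\|_\infty \lesssim \sqrt{\log N}$ uniformly in $N$. Because $\epsilon_n^2 = 1$, however, the shared signs cancel to produce the deterministic identity
\[
Tf_N(x) \;=\; \sum_{n=1}^{N} n^{-(1/2 - 1/\gamma + \beta)}(\log(n+2))^{-\delta} e^{inx},
\]
and standard asymptotics for the $L^p$-norm of partial sums $\sum_{n \leq N} n^{-s} e^{inx}$ with $s \in (0,1)$ and $(1-s)p > 1$ yield
\[
\|Tf_N\|_p \;\gtrsim\; N^{1/2+1/\gamma - \beta - 1/p}(\log N)^{-\delta}.
\]
Since $\beta < 1/2 + 1/\gamma - 1/p$ the exponent of $N$ is strictly positive, and the ratio $\|Tf_N\|_p/\|f_N\|_\infty$ tends to infinity. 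Intersecting the full-probability events over a countable dense set of $p \in (\gamma,\infty)$ then produces a single realisation of $(\epsilon_n)$ for which $T$ fails to be bounded from $L^\infty$ into $L^p$ for every $p > \gamma$ simultaneously.

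The main obstacle is the endpoint nature of the construction: the coefficients $c_n$ must be $\ell^{2\gamma/(\gamma-2)}$-summable yet come arbitrarily close to the forbidden pure-power decay $n^{-(1/2-1/\gamma)}$, because a single operator has to witness failure for \emph{every} $p > \gamma$. The logarithmic tuning is what straddles this boundary, and checking that the log-loss in the upper bound is compatible with the log-factor in the Salem--Zygmund estimate is where some care is needed. All remaining ingredients---duality together with H\"older and Hausdorff--Young for the upper bound, and the partial-sum $L^p$ asymptotic for $\sum n^{-s}e^{inx}$---are routine.
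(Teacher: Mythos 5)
Your proposal is correct, but it follows a genuinely different route from the paper. The paper's stated aim in this appendix is to give a \emph{constructive} proof (in contrast with Bourgain's $\Lambda(p)$-set argument, which is non-constructive), and it does so deterministically using Rudin--Shapiro polynomials. It first builds, for $1\leq r\leq 2$, a convolution operator with kernel $\sum_m m^{-2}2^{m/2}2^{-m/r}\tilde P_m$ (a modulated lacunary sum of Rudin--Shapiro pieces with disjoint frequency supports), proves it is bounded $L^r\to L^2$ but unbounded $L^p\to L^1$ for $p<r$, and then transfers to the stated $L^2\to L^\gamma$ regime by duality. The key deterministic input is the Rudin--Shapiro flatness bound $\|P_m\|_\infty\lesssim 2^{m/2}$, which plays exactly the role your Salem--Zygmund $\sqrt{\log N}$ bound plays probabilistically (indeed the Rudin--Shapiro bound is strictly sharper, with no log loss). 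Your proof randomises the multiplier signs, exploits that the $L^2\to L^\gamma$ upper bound depends only on $|c_n|$ (via H\"older and Hausdorff--Young on the adjoint), and then tests against $f_N=\sum\epsilon_n n^{-\beta}e^{inx}$ so that $\epsilon_n^2=1$ collapses $Tf_N$ to a deterministic power-sum whose $L^p$ growth beats $\|f_N\|_\infty\lesssim\sqrt{\log N}$. This is a legitimate proof of the existence statement in the lemma, and it essentially extends the Rademacher-function argument for the $\gamma=2$ case (cited in the paper from Garc\'ia-Cuerva--Rubio de Francia) to general $\gamma\geq 2$. What it loses relative to the paper is precisely the constructiveness the paper was after: the good realisation of $(\epsilon_n)$ is only shown to exist on a full-measure event, not exhibited explicitly. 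What it gains is that the upper bound is elementary and uniform in $\omega$, and the lower bound needs only one family of test functions per exponent $p$; one small remark is that you do not need a dense set of $p$'s — a single sequence $p_k\downarrow\gamma$ suffices, since unboundedness $L^\infty\to L^{p_k}$ on $\mathbb T$ implies unboundedness $L^\infty\to L^p$ for every $p\geq p_k$ by nesting of the $L^p(\mathbb T)$ scale.
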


For the case $\gamma = 2$ of Lemma~\ref{FTP_0}, an argument based on Rademacher functions can be found in \cite{GCRdeF}, see Chapter VI, Example 2.10(e). The case $\gamma > 2$ follows readily from Bourgain's solution of the $\Lambda(p)$-set problem, \cite{MR989397}. This result states that for each $2 < \gamma < \infty$ there is a set $E \subseteq \mathbb{Z}$ which is a $\Lambda(\gamma)$-set, but which is not a $\Lambda(\tilde{p})$-set for any $\tilde{p} > \gamma$. 
If $T$ is the Fourier multiplier operator with multiplier $\chi_E$, then $T$ is bounded from $L^2(\bt)$ to $L^\gamma(\bt)$ (since $E$ is a $\Lambda(\gamma)$-set) but unbounded from $L^\infty(\bt)$ to $L^p(\bt)$ for every $p>\gamma$ 
(since if $T:L^\infty\to L^p$ boundedly for some $p>\gamma$, then interpolating between this bound 
and the bound $T:L^2\to L^\gamma$ with $\gamma>2$ gives the bound $T:L^q\to L^{\tilde{p}}$ for some 
$q<\tilde{p}$ and $\tilde{p}>\gamma$, which would imply that $E$ is a $\Lambda(\tilde{p})$-set, a contradiction). 
(We thank an anonymous referee for pointing out this connection to us.) Bourgain's argument gives the stronger conclusion that the operator $T$ can also be chosen to satisfy satisfy $T^2 = T$. On the other hand, his argument is not constructive, and so we give a simple constructive proof of Lemma~\ref{FTP_0} -- which is perhaps of independent interest -- in Section~\ref{pf_lem} below.

\medskip
\noindent
We return to the detailed discussion of Lemma~\ref{ocho_dos}.
\begin{itemize}

\item When $2 \leq \gamma <r$ we appeal to Lemma~\ref{FTP_0}, and we take $T$ to {\color{blue}be} a translation-invariant bounded linear operator  $T:L^2\to L^\gamma $ (and hence $T:  L^r  \to L^\gamma$) that is not bounded from $L^\infty$ to $L^p$ for any $p>\gamma$. 

\item When $\gamma < 2 < r$ we appeal to Lemma~\ref{FTP_0}, and we take $T$ to be a translation-invariant bounded linear  operator $T:L^2\to L^2$ (and hence $T: L^r  \to L^\gamma$) that is not bounded from $L^\infty$ to $L^p$ for any $p>2$. 

\item When $\gamma < r$ and $r=2$ we take $T$ to be the identity operator.

\item 
When $\gamma < r < 2$ we appeal to a theorem of Zafran \cite{Zafran} which states that for each $r < 2$ there is a translation-invariant bounded linear operator $T:L^r(\mathbb{T}) \to L^{r,\infty}(\mathbb{T})$ (and thus $T:L^r(\mathbb{T}) \to L^{\gamma}(\mathbb{T})$ for all $\gamma < r$) such that $T$ is not bounded on $L^r$. 
\end{itemize}

\medskip
By taking tensor products we obtain corresponding multilinear examples. Indeed, by choosing operators $T_j:L^{r_j}(\mathbb{G}_j)\to L^{\gamma_j}(\mathbb{G}_j)$ as in Lemmas~\ref{ocho_uno} and \ref{ocho_dos}, and letting the measure space $(X, {\rm d}\mu)$ be the product $X = \mathbb{G}_1 \times \dots \times \mathbb{G}_d$, with $ {\rm d}\mu$ as product measure, we obtain:
\begin{prop}
For each $1\leq r_j\leq\infty$ and $0<\gamma_j<\infty$ there is a $\sigma$-finite measure space $X$ and there are positive linear operators $T_j:L^{r_j}(\mathbb{G}_j) \to\mathcal{M}(X)$ such that
$$ \int_X \prod_{j=1}^d |T_j f_j|^{\gamma_j} \lesssim \prod_{j=1}^d \|f_j\|_{r_j}^{\gamma_j}$$
and such that 
$$\{ (p_j) \in (0,\infty)^d : \mbox{ for each } j, \, T_j: L^{r_j} \to L^{p_j}(\phi_j) \mbox{ boundedly for some nontrivial }\phi_j\}$$ 
$$= \prod_{j=1}^d I_{r_j, \gamma_j}= \prod_{j=1}^d(0, \max\{\gamma_j, r_j\}].$$
\end{prop}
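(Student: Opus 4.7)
The plan is to take tensor products of the one-dimensional examples from Lemma~\ref{ocho_uno}. For each $j$, fix the positive translation-invariant operator $S_j:L^{r_j}(\mathbb{G}_j)\to L^{\gamma_j}(\mathbb{G}_j)$ provided by that lemma, whose admissible exponents form exactly $I_{r_j,\gamma_j}$. Set $X:=\mathbb{G}_1\times\cdots\times\mathbb{G}_d$ with the product Haar measure $d\mu$, and define $T_j:L^{r_j}(\mathbb{G}_j)\to\mathcal{M}(X)$ by
$$(T_j f_j)(x_1,\dots,x_d):=(S_j f_j)(x_j),$$
so that $T_j f_j$ depends only on the $j$th coordinate. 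Each $T_j$ is clearly positive and linear, and $X$ is $\sigma$-finite.

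The multilinear bound is immediate from Fubini and the one-dimensional boundedness of $S_j$:
$$\int_X\prod_{j=1}^d|T_j f_j(x)|^{\gamma_j}\,d\mu(x)=\prod_{j=1}^d\int_{\mathbb{G}_j}|S_j f_j(x_j)|^{\gamma_j}\,dx_j\lesssim\prod_{j=1}^d\|f_j\|_{r_j}^{\gamma_j}.$$

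For the characterisation of admissible tuples, the inclusion $\prod_j I_{r_j,\gamma_j}\subseteq\{(p_j)\,:\,\ldots\}$ is easy: given weights $\tilde\phi_j$ on $\mathbb{G}_j$ realising each $p_j\in I_{r_j,\gamma_j}$ via Lemma~\ref{ocho_uno}, lift them to $X$ by $\phi_j(x):=\tilde\phi_j(x_j)\,\chi_{E_j}(\hat x_j)$, where $\hat x_j$ denotes the coordinates other than the $j$th and $E_j\subseteq\prod_{i\ne j}\mathbb{G}_i$ is any measurable set of finite positive measure. Then $\phi_j$ is nontrivial and Fubini gives
$$\int_X|T_j f_j(x)|^{p_j}\phi_j(x)\,d\mu(x)=\mu(E_j)\int_{\mathbb{G}_j}|S_j f_j(x_j)|^{p_j}\tilde\phi_j(x_j)\,dx_j\lesssim\|f_j\|_{r_j}^{p_j}.$$

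For the reverse inclusion, suppose for some $j$ there exists a nontrivial weight $\phi_j$ on $X$ making $T_j:L^{r_j}\to L^{p_j}(X,\phi_j)$ bounded. Integrating out the remaining coordinates and using Fubini produces
$$\tilde\phi_j(x_j):=\int_{\prod_{i\ne j}\mathbb{G}_i}\phi_j(x_j,\hat x_j)\,d\hat x_j,\qquad\int_{\mathbb{G}_j}|S_j f_j(x_j)|^{p_j}\tilde\phi_j(x_j)\,dx_j\lesssim\|f_j\|_{r_j}^{p_j}.$$
By Lemma~\ref{ocho_uno} this forces $p_j\in I_{r_j,\gamma_j}$, provided we know $\tilde\phi_j$ is itself nontrivial and a.e.\ finite on some set of positive measure. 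The only mild technical point in the plan, and the one I expect to be the main obstacle, is ensuring this: it is handled by first truncating $\phi_j$ to the form $\phi_j\cdot\chi_{\mathbb{G}_j\times F}$ for a set $F\subseteq\prod_{i\ne j}\mathbb{G}_i$ of finite measure along which $\int_F\phi_j(\cdot,\hat x_j)\,d\hat x_j$ is positive on a set of positive $x_j$-measure (possible by $\sigma$-finiteness of $X$ and nontriviality of $\phi_j$). The truncated weight is still valid in the hypothesis and yields a nontrivial a.e.\ finite $\tilde\phi_j$, completing the reduction to the one-dimensional statement.
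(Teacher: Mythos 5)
Your proof is correct and takes exactly the tensor-product route the paper uses (the paper states the construction in one line; you fill in the routine details). The truncation you introduce for the reverse inclusion, though harmless, is actually unnecessary: Tonelli already shows the full marginal $\tilde\phi_j(x_j)=\int\phi_j(x_j,\hat x_j)\,d\hat x_j$ is nontrivial once $\phi_j$ is, and Fubini gives $\int_{\mathbb{G}_j}|S_jf_j|^{p_j}\tilde\phi_j\lesssim\|f_j\|_{r_j}^{p_j}$ directly, which is all that Lemma~\ref{ocho_uno} requires.
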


\begin{prop}
For each $1\leq r_j<\infty$ and $0<\gamma_j<\infty$ there is a $\sigma$-finite measure space $X$ and there are linear operators $T_j:L^{r_j}(\mathbb{G}_j) \to\mathcal{M}(X)$ such that
$$ \int_X \prod_{j=1}^d |T_j f_j|^{\gamma_j} \lesssim \prod_{j=1}^d \|f_j\|_{r_j}^{\gamma_j}$$
and such that 
$$\{ (p_j) \in (0,\infty)^d : \mbox{ for each } j, \, T_j: L^{r_j} \to L^{p_j}(\phi_j) \mbox{ boundedly for some nontrivial }\phi_j\}$$ 
$$= \prod_{j=1}^d J_{r_j, \gamma_j}.$$
\end{prop}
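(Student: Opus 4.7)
The plan is to tensor the single-variable examples from Lemma~\ref{ocho_dos} together on the product space $X := \prod_{j=1}^d \mathbb{G}_j$ (with product Haar measure, which is $\sigma$-finite). Concretely, for each $j$ let $T_j : L^{r_j}(\mathbb{G}_j) \to L^{\gamma_j}(\mathbb{G}_j)$ be the operator supplied by Lemma~\ref{ocho_dos}, and extend it to $\tilde T_j : L^{r_j}(\mathbb{G}_j) \to \mathcal{M}(X)$ by declaring it to act in the $j$-th coordinate only: $\tilde T_j f_j (x_1,\dots,x_d) := (T_j f_j)(x_j)$. The multilinear bound is then immediate from Fubini:
\[
\int_X \prod_{j=1}^d |\tilde T_j f_j(x)|^{\gamma_j} \, \mathrm{d}\mu(x) \;=\; \prod_{j=1}^d \|T_j f_j\|_{L^{\gamma_j}(\mathbb{G}_j)}^{\gamma_j} \;\lesssim\; \prod_{j=1}^d \|f_j\|_{L^{r_j}(\mathbb{G}_j)}^{\gamma_j}.
\]

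For the inclusion $\prod_j J_{r_j,\gamma_j} \subseteq \{(p_j)\}$, given $(p_j) \in \prod_j J_{r_j,\gamma_j}$, for each $j$ let $\psi_j : \mathbb{G}_j \to [0,\infty)$ be a nontrivial weight with $T_j : L^{r_j}(\mathbb{G}_j) \to L^{p_j}(\mathbb{G}_j, \psi_j)$ bounded (guaranteed by Lemma~\ref{ocho_dos}), and fix nonzero nonnegative $\xi_k \in L^1(\mathbb{G}_k)$ for each $k$. Setting $\phi_j(x) := \psi_j(x_j) \prod_{k \neq j} \xi_k(x_k)$, we get a nontrivial weight on $X$ satisfying
\[
\int_X |\tilde T_j f_j|^{p_j}\phi_j \, \mathrm{d}\mu \;=\; \Bigl(\int_{\mathbb{G}_j}|T_jf_j|^{p_j}\psi_j\Bigr)\prod_{k\neq j}\|\xi_k\|_1 \;\lesssim\; \|f_j\|_{L^{r_j}}^{p_j}.
\]

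For the reverse inclusion, suppose $\phi_j : X \to [0,\infty)$ is a nontrivial weight with $\tilde T_j : L^{r_j}(\mathbb{G}_j) \to L^{p_j}(X,\phi_j)$ bounded. Write $x = (x_j, \hat x_j)$ and consider the marginal
\[
\hat\phi_j(x_j) \;:=\; \int_{\prod_{k\neq j}\mathbb{G}_k} \phi_j(x_j, \hat x_j)\, \mathrm{d}\hat x_j \;\in\; [0,\infty].
\]
Fubini--Tonelli gives
\[
\int_{\mathbb{G}_j} |T_j f_j(x_j)|^{p_j}\, \hat\phi_j(x_j)\, \mathrm{d}x_j \;=\; \int_X |\tilde T_j f_j|^{p_j}\phi_j \, \mathrm{d}\mu \;\leq\; C\|f_j\|_{L^{r_j}}^{p_j}.
\]
The nontriviality of $\phi_j$ on $X$ yields that $\hat\phi_j$ is nontrivial on $\mathbb{G}_j$. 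The set $\{\hat\phi_j = +\infty\}$ must be null, since otherwise the displayed inequality would force $T_j f_j \equiv 0$ on a set of positive measure for every $f_j$, contradicting that the operator of Lemma~\ref{ocho_dos} is nontrivial on its range (one can choose $T_j$ to have full range in a suitable sense, e.g.\ to saturate $\mathbb{G}_j$). Hence $\hat\phi_j$ is a nontrivial $[0,\infty)$-valued weight on $\mathbb{G}_j$ (after ignoring the null set on which it is infinite), and the inequality gives $T_j : L^{r_j}(\mathbb{G}_j) \to L^{p_j}(\mathbb{G}_j, \hat\phi_j)$ boundedly. By Lemma~\ref{ocho_dos}, $p_j \in J_{r_j,\gamma_j}$.

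\textbf{Main obstacle.} The forward inclusion is a clean tensor-product construction. The delicate point is the reverse inclusion: one must argue that a weight $\phi_j$ on the product space $X$ can be ``pushed forward'' to a nontrivial, a.e.-finite weight on $\mathbb{G}_j$ while preserving the boundedness inequality. This is accomplished by integrating out the transverse coordinates and using nontriviality/fullness of $T_j$ to rule out the pathological scenario in which the marginal is infinite on a set of positive measure.
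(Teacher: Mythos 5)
Your proposal takes exactly the same approach as the paper — a tensor-product construction of the operators on the product measure space — which is unsurprising since the paper's own ``proof'' consists of a single sentence directing the reader to take tensor products. What you add, and add correctly, is the verification the paper omits: the forward inclusion via product weights $\phi_j(x)=\psi_j(x_j)\prod_{k\neq j}\xi_k(x_k)$, and the reverse inclusion by marginalising the weight over the transverse coordinates. The one place you wave your hands is the claim that the operators of Lemma~\ref{ocho_dos} can be ``chosen'' to saturate $\mathbb{G}_j$ so that $\{\hat\phi_j=\infty\}$ is null. In fact no choice is needed: all the operators in Lemma~\ref{ocho_dos} are nonzero and translation-invariant on $\mathbb{T}$ or $\mathbb{R}$, and for any such $T$ the condition ``$Tf=0$ a.e. on a fixed set $F$ of positive measure for every $f$'' fails automatically, because translating $f$ translates $Tf$, so $Tf=0$ a.e. on $F+t$ for all $t$, forcing $T\equiv 0$. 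It would be cleaner to invoke this rather than gesturing at saturation. With that small tightening, the argument is complete and matches the paper's intended route.
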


\medskip
As immediate corollaries we have:
\begin{cor}\label{seis_tres}
For each $1\leq r_j\leq\infty$ and $0<\gamma_j<\infty$ there is a $\sigma$-finite measure space $X$ and there are positive linear operators $T_j: L^{r_j}(\mathbb{G}_j) \to \mathcal{M}(X)$ such that
$$ \int_X \prod_{j=1}^d |T_j f_j|^{\gamma_j} \lesssim \prod_{j=1}^d \|f_j\|_{r_j}^{\gamma_j}$$
and such that 
$$\{ (p_j) \in (0,\infty)^d : \sum_{j=1}^d \frac{\gamma_j}{p_j} = 1, \mbox{and, for each } j, \, T_j: L^{r_j} \to L^{p_j}(\phi_j) \mbox{ boundedly for some nontrivial }\phi_j\}$$ 
is nonempty if and only if $\sum_{j=1}^d \gamma_j/r_j \leq 1$, and, when this condition holds, equals 
$$
\left(\prod_{j=1}^d(0,r_j]\right) \bigcap \left\{ (p_j) \in (0, \infty)^d : \sum_{j=1}^d \frac{\gamma_j}{p_j} = 1\right\}.$$
\end{cor}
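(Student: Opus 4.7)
The plan is to derive the corollary essentially as a direct consequence of the preceding proposition, which asserts the existence of operators $T_j:L^{r_j}(\mathbb{G}_j)\to\mathcal{M}(X)$ satisfying the scalar multilinear bound with exponents $(\gamma_j)$ and with admissible weight exponents precisely
\[
\prod_{j=1}^d I_{r_j,\gamma_j}=\prod_{j=1}^d(0,\max\{\gamma_j,r_j\}].
\]
So I would take the very same operators furnished by that proposition; what remains is to intersect this product set with the hyperplane $\sum_{j=1}^d \gamma_j/p_j=1$ and to compare the result with $\prod_{j=1}^d(0,r_j]\cap\{\sum_j\gamma_j/p_j=1\}$.

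The key observation, which I would record first, is that under the hypothesis $\sum_{j=1}^d \gamma_j/r_j\leq 1$ one automatically has $\gamma_j\leq r_j$ for each individual $j$, since every summand $\gamma_j/r_j$ is strictly positive and their sum does not exceed $1$. Consequently $\max\{\gamma_j,r_j\}=r_j$ for all $j$, so the product set from the proposition collapses to $\prod_{j=1}^d(0,r_j]$, and intersecting with the hyperplane yields precisely the set described in the corollary. This gives one direction of the ``when nonempty, equals'' claim.

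For the nonemptiness dichotomy, I would argue as follows. If $\sum_{j=1}^d \gamma_j/r_j\leq 1$, then choosing $p_j=r_j$ gives $\sum_j \gamma_j/p_j\leq 1$; if the sum equals $1$ we are done, otherwise we may decrease some $p_{j_0}$ below $r_{j_0}$ (which increases $\gamma_{j_0}/p_{j_0}$ continuously from $\gamma_{j_0}/r_{j_0}$ up to $+\infty$) until the sum equals $1$, while keeping $p_j\leq r_j$ for all $j$. Conversely, if $\sum_{j=1}^d \gamma_j/r_j>1$, then any $(p_j)$ with $p_j\leq r_j$ for all $j$ satisfies $\sum_j\gamma_j/p_j\geq \sum_j\gamma_j/r_j>1$, so the hyperplane cannot be reached from inside $\prod_j(0,r_j]$; and in that regime $\gamma_j>r_j$ for at least one $j$ is not forced, but whatever slack is provided by enlarging the admissible range to $(0,\max\{\gamma_j,r_j\}]$ does not help the proposition's set either, since the preceding proposition characterises the admissible $(p_j)$ exactly, and this set is already excluded from the hyperplane by the preceding computation.

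No step looks to be a serious obstacle: the proof is almost entirely bookkeeping and the elementary inequality $\gamma_j/r_j\leq 1$ discussed above. The only point that warrants care is making sure, in the converse direction of the nonemptiness claim, that the operators produced by the preceding proposition genuinely cannot admit any $p_j>r_j$ on the hyperplane — but this is exactly the content of the \emph{equality} (not merely inclusion) in the proposition's characterisation of admissible exponents, which in turn rests on Lemma~\ref{ocho_uno}. So I would simply cite the proposition and Lemma~\ref{ocho_uno} at that point rather than re-prove anything.
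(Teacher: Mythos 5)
Your approach is the same as the paper's (which presents the corollary as immediate from the preceding proposition), and most of the bookkeeping is sound: the key observation that $\sum_{j}\gamma_j/r_j\leq 1$ forces $\gamma_j\leq r_j$ for every $j$, so that $\prod_j I_{r_j,\gamma_j}$ collapses to $\prod_j(0,r_j]$, and the continuity argument that this slab actually meets the hyperplane, are both correct.

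However, the converse direction of the nonemptiness claim has a genuine gap. You correctly show that $\prod_j(0,r_j]$ misses the hyperplane when $\sum_j\gamma_j/r_j>1$, and you correctly note that in this regime one may have $\gamma_{j_0}>r_{j_0}$ for some $j_0$, so the true admissible set $\prod_j(0,\max\{\gamma_j,r_j\}]$ can be \emph{strictly larger} than $\prod_j(0,r_j]$. But your stated reason that ``whatever slack is provided by enlarging the admissible range \dots does not help \dots since \dots this set is already excluded from the hyperplane by the preceding computation'' is circular: the preceding computation addressed only the sub-box $\prod_j(0,r_j]$, not the enlarged set. The missing (short) argument is: if $\gamma_{j_0}>r_{j_0}$, then any admissible $p_{j_0}$ satisfies $p_{j_0}\leq\max\{\gamma_{j_0},r_{j_0}\}=\gamma_{j_0}$, hence $\gamma_{j_0}/p_{j_0}\geq 1$; since $d\geq 2$ and each remaining $\gamma_j/p_j>0$ (both $\gamma_j>0$ and $p_j<\infty$), one gets $\sum_j\gamma_j/p_j>1$, so the hyperplane is again missed. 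Inserting this observation completes the proof.
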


\begin{cor}\label{seis_diez}
For each $1\leq r_j<\infty$ and $0<\gamma_j<\infty$ there is a $\sigma$-finite measure space $X$ and there are linear operators $T_j: L^{r_j}(\mathbb{G}_j) \to \mathcal{M}(X)$ such that
$$ \int_X \prod_{j=1}^d |T_j f_j|^{\gamma_j} \lesssim \prod_{j=1}^d \|f_j\|_{r_j}^{\gamma_j}$$
and such that 
$$\{ (p_j) \in (0,\infty)^d : \sum_{j=1}^d \frac{\gamma_j}{p_j} = 1, \mbox{and, for each } j, \, T_j: L^{r_j} \to L^{p_j}(\phi_j) \mbox{ boundedly for some nontrivial }\phi_j\}$$ 
$$ = \left(\prod_{j=1}^d J_{r_j, \gamma_j}\right) \bigcap \left\{
(p_j) \in (0,\infty)^d : \sum_{j=1}^d \frac{\gamma_j}{p_j} = 1\right\}.
$$
This set is nonempty if and only if we have $\sum_{j=1}^d \gamma_j/\min\{r_j,2\} < 1$ when at least one $r_j < 2$, and $\sum_{j=1}^d \gamma_j \leq 2$ when all $r_j \geq 2$. When nonempty, this set equals 
$$ \left(\prod_{j \, : \, r_j < 2} (0,r_j) \times \prod _{j \,: \, r_j \geq 2} (0,2]\right)\bigcap
\left\{(p_j)\in (0, \infty)^d \, : \, \sum_{j=1}^d \frac{\gamma_j}{p_j}=1\right\}.$$
\end{cor}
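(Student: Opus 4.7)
The plan is to deduce Corollary~\ref{seis_diez} directly from the preceding Proposition on general linear operators. That proposition constructs, for any $1\leq r_j<\infty$ and $0<\gamma_j<\infty$, operators $T_j:L^{r_j}(\mathbb{G}_j)\to \mathcal{M}(X)$ satisfying the multilinear bound, and for which the set of $(p_j)$ giving boundedness $T_j:L^{r_j}\to L^{p_j}(\phi_j)$ (for some non-trivial $\phi_j$) is precisely $\prod_{j=1}^d J_{r_j,\gamma_j}$. Thus the set displayed in the corollary equals $\bigl(\prod_{j=1}^d J_{r_j,\gamma_j}\bigr)\cap\{(p_j):\sum_j \gamma_j/p_j=1\}$, and all that remains is to simplify this intersection and to determine when it is non-empty.

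My key observation is that, since $d\geq 2$ and every $\gamma_j>0$, the constraint $\sum_j\gamma_j/p_j=1$ automatically enforces each $\gamma_j/p_j<1$, equivalently $p_j>\gamma_j$, for every $j$. A short case analysis against the definition of $J_{r_j,\gamma_j}$ then shows that $J_{r_j,\gamma_j}\cap(\gamma_j,\infty)$ equals $(\gamma_j,2]$ when $r_j\geq 2$ and $\gamma_j<2$, equals $(\gamma_j,r_j)$ when $r_j<2$ and $\gamma_j<r_j$, and is empty in all remaining cases. These coincide with the intersections of $(\gamma_j,\infty)$ with $(0,2]$ (for $r_j\geq 2$) and $(0,r_j)$ (for $r_j<2$), respectively. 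Since the hyperplane condition already enforces $p_j>\gamma_j$, the factor $(\gamma_j,\infty)$ is redundant and may be absorbed into the constraint, producing the ``simplified'' form claimed in the corollary.

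For the non-emptiness dichotomy, I would note that as each $p_j$ ranges over its admissible coordinate interval, $\gamma_j/p_j$ varies over $[\gamma_j/2,1)$ if $r_j\geq 2$ and over $(\gamma_j/r_j,1)$ if $r_j<2$. By continuity on the product of these intervals, $\sum_j\gamma_j/p_j$ attains every value in an interval whose left endpoint is $\sum_j\gamma_j/\min\{r_j,2\}$ (attained precisely when every $r_j\geq 2$) and whose right endpoint is $d>1$ (never attained). Hence $\sum_j\gamma_j/p_j=1$ is achievable if and only if this left endpoint is $\leq 1$, with strict inequality required as soon as some $r_j<2$. This is exactly the dichotomy asserted in the corollary.

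The argument presents no serious technical obstacle; the main labour lies in the coordinatewise case analysis and the intermediate-value observation for the sum $\sum_j\gamma_j/p_j$, both of which are routine bookkeeping.
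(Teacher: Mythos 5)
Your proposal is correct and fills in exactly the details the paper leaves implicit when it labels this an ``immediate corollary'' of the preceding Proposition: intersect $\prod_j J_{r_j,\gamma_j}$ with the hyperplane $\sum_j \gamma_j/p_j=1$, observe (using $d\geq 2$ from the Basic Question) that the hyperplane constraint forces $p_j>\gamma_j$ so the caps at $\gamma_j$ in $J_{r_j,\gamma_j}$ are redundant, and apply a continuity/intermediate-value argument for the non-emptiness dichotomy. The only point worth making explicit is that the non-emptiness criterion also guarantees (again via $d\geq 2$) that each coordinate interval $(\gamma_j,\min\{r_j,2\})$ (closed at $2$ when $r_j\geq 2$) is itself non-empty, so the intermediate-value step is legitimately applied on a non-empty connected product.
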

These two corollaries establish the assertions concerning sharpness of Theorems~\ref{answer:qwm_0} and \ref{answer:general_operators} which we made in the introduction.

\subsection{Disentanglement implies $p$-convexity}
 
Here we show that the hypotheses of $p$-convexity are intrinsic to Theorem~\ref{qwm_bb}, since $p$-convexity follows from the conclusion of that result, at least in the case when the spaces $\mathcal{Y}_j$ are K\"othe spaces whose duals are norming. This class includes Lorentz spaces and Orlicz spaces.

\medskip
\noindent
We therefore assume in what follows that each $\mathcal{Y}_j$ is a K\"othe function lattice over the $\sigma$-finite measure space $(Y_j, {\rm d}\nu_j)$, and that we can realise the norm of any $f \in \mathcal{Y}_j$ as
$$ \|f\|_{\mathcal{Y}_j} = \sup_{\|g\|_{\mathcal{Y}_j' \leq 1}} |\int_{Y_j} f g \, {\rm d} \nu_j|.$$
We remark that a K\"othe dual $\cy'$ is norming if and only if the pointwise convergence $f_n\uparrow f$ implies the norm convergence $\norm{f_n}_{\cy} \to \norm{f}_{\cy}$ for all pointwise increasing sequences $(f_n)$ (though we shall not need this characterisation here).
\begin{prop}\label{p-convex_nec}
Fix $\cy_j$ as above, and fix $1<p_j<\infty$ for $1 \leq j \leq d$. Assume that there exists a constant $C_{\{\cy_j\}}$ such that for all weights $(\theta_j)$ with $\theta_j > 0$ and $\sum_{j=1}^d \theta_j=1$, all $\sigma$-finite measure spaces $(X,\dmu)$, and all saturating positive linear operators $T_j:\cy_j \to \cm(X)$ the estimate
$$
\int_X \prod_{j=1}^d \abs{T_jf_j(x)}^{p_j \theta_j} \dmu(x) \leq A\prod_{j=1}^d \norm{f_j}_{\cy_j}^{p_j \theta_j} \quad\text{for all $f_j\in \cy_j$}
$$
implies the existence of functions $\phi_j$ such that $\prod_{j=1}^d \phi_j(x)^{\theta_j}\geq 1$ and such that $$ \left(\int_X |T_j f_j|^{p_j} \phi_j \rm d \mu\right)^{1/p_j} \leq C_{\{\cy_j\}} A^{1/p_j} \|f_j\|_{\mathcal{Y}_j}.
$$
Then each space $\cy_j$ is $p_j$-convex.
\end{prop}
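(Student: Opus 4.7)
The plan is to use the equivalence between the disentanglement conclusion and its vector-valued reformulation (Theorem~\ref{vector} together with its easy converse) to extract $p_1$-convexity of $\cy_1$. By relabelling, it is enough to treat the case $j_0=1$. Given $h_1,\dots,h_N\in\cy_1$, the norming property of $\cy_1'$ reduces the target inequality $\|(\sum_k|h_k|^{p_1})^{1/p_1}\|_{\cy_1}\leq C(\sum_k\|h_k\|_{\cy_1}^{p_1})^{1/p_1}$ to producing a bound
\begin{equation*}
\int_{Y_1}\Big(\sum_{k=1}^N|h_k|^{p_1}\Big)^{1/p_1} g\,\mathrm{d}\nu_1\lesssim\Big(\sum_{k=1}^N\|h_k\|_{\cy_1}^{p_1}\Big)^{1/p_1}
\end{equation*}
uniform over nonnegative $g\in\cy_1'$ with $\|g\|_{\cy_1'}\leq 1$.

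Set $\theta_1:=1/p_1\in(0,1)$ (admissible since $p_1>1$) and pick arbitrary positive $\theta_2,\dots,\theta_d$ with $\sum_{j\geq 2}\theta_j=1-1/p_1$. For each $j\geq 2$, using the norming of $\cy_j'$ and, if necessary, replacing an extremal functional by its absolute value, fix a positive $u_j\in\cy_j'$ with $\|u_j\|_{\cy_j'}\leq 1$ and a positive $f_j^*\in\cy_j$ with $\|f_j^*\|_{\cy_j}=1$ and $\int_{Y_j}f_j^*u_j\,\mathrm{d}\nu_j\geq 1/2$. For the chosen $g$, let $X:=Y_1$ equipped with the ($\sigma$-finite) measure $\mathrm{d}\mu:=g\,\mathrm{d}\nu_1$, let $T_1:\cy_1\to\cm(X)$ be the identity embedding, and for $j\geq 2$ define $T_jf:=(\int f u_j)\mathbf{1}_X$. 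These are positive linear operators that saturate $X$ ($T_1$ because $\cy_1$ contains simple functions of finite-measure support; the others immediately). The point of the exponent choice $p_1\theta_1=1$ is that the K\"othe H\"older inequality makes the scalar hypothesis automatic with $A=1$:
\begin{equation*}
\int_X|T_1f_1|^{p_1\theta_1}\prod_{j\geq 2}|T_jf_j|^{p_j\theta_j}\,\mathrm{d}\mu=\Big(\int_{Y_1}|f_1|g\,\mathrm{d}\nu_1\Big)\prod_{j\geq 2}\Big|\int f_ju_j\Big|^{p_j\theta_j}\leq\prod_{j=1}^d\|f_j\|_{\cy_j}^{p_j\theta_j}.
\end{equation*}
The assumption of the proposition therefore supplies nonnegative $\phi_j$ on $X$ with $\prod_j\phi_j^{\theta_j}\geq 1$ and $\bigl(\int|T_jf_j|^{p_j}\phi_j\,\mathrm{d}\mu\bigr)^{1/p_j}\leq C_{\{\cy_j\}}\|f_j\|_{\cy_j}$.

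Multiplying through by $\prod_j\phi_j^{\theta_j}\geq 1$ and applying H\"older with the conjugate exponents $(1/\theta_j)$, exactly as in the converse direction to Theorem~\ref{vector}, upgrades these disentangled bounds to the vector-valued inequality
\begin{equation*}
\int_X\prod_{j=1}^d\Big(\sum_{k=1}^N|T_jf_{jk}|^{p_j}\Big)^{\theta_j}\mathrm{d}\mu\leq C_{\{\cy_j\}}^{\sum_j p_j\theta_j}\prod_{j=1}^d\Big(\sum_{k=1}^N\|f_{jk}\|_{\cy_j}^{p_j}\Big)^{\theta_j}.
\end{equation*}
Test this with $f_{1k}:=h_k$ for $1\leq k\leq N$ and, for each $j\geq 2$, $f_{j1}:=f_j^*$ together with $f_{jk}:=0$ for $k\geq 2$. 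The $j\geq 2$ factors on both sides evaluate to constants bounded independently of $N$; the $j=1$ factor on the left collapses to the pointwise function $(\sum_k|h_k|^{p_1})^{1/p_1}$ (since $T_1$ is the identity) and the $j=1$ factor on the right is $(\sum_k\|h_k\|_{\cy_1}^{p_1})^{1/p_1}$. Recalling $\mathrm{d}\mu=g\,\mathrm{d}\nu_1$ and taking the supremum over the positive part of the unit ball of $\cy_1'$, the norming hypothesis delivers the desired $p_1$-convexity estimate for $\cy_1$. The chief delicacy is the exponent bookkeeping: the calibration $\theta_1=1/p_1$ is exactly what makes the scalar hypothesis hold for free while still allowing the vector-valued consequence to register the full $p_1$-power structure on the first coordinate.
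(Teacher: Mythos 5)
Your proposal is correct and follows essentially the same path as the paper's proof: take $X=Y_{j_0}$ with density $g$, let $T_{j_0}$ be the identity, take the remaining $T_i$ to be rank-one positive operators built from near-norming pairs in $\cy_i\times\cy_i'$, calibrate $\theta_{j_0}=1/p_{j_0}$ so the scalar hypothesis holds trivially with $A=1$, pass via the Section~\ref{strat} equivalence to the vector-valued inequality, and then test and take the supremum over $g$ in the unit ball of $\cy_{j_0}'$. The only cosmetic difference is that you fix the near-norming constant at $1/2$ rather than $1-\epsilon$ and skip the limit $\epsilon\to 0$, which is harmless since an unspecified constant is allowed in the $p_j$-convexity bound.
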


\begin{proof} 
Fix $j$. Let $g_j\in \cy_j'$ be of unit norm. Let $(X,\dmu):=(Y_j,\abs{g_j}{\rm d} \nu_j)$. We define $T_j:=I_{\cy_j \to \cy_j}$. 

\medskip
\noindent
For each $i\neq j$, we choose a nonnegative function $F_{i}$ on $Y_i$ such that $\norm{F_{i}}_{\cy_i}=1$. Since $\cy_i'$ is assumed to be norming, for each $\epsilon>0$ we can choose a non-negative function $G_i$ on $Y_i$ with $\|G_i\|_{\cy_i '} =1$ such that $\int_{Y_i} F_i G_i {\rm d} \nu_i \geq (1-\epsilon)\norm{F_{i}}_{\cy_i}=(1-\epsilon)$. We define $T_i: \cy_i \to \mathcal{M}(X)$ by
$$
T_i f (x) = \int_{Y_i} f G_i {\rm d} \nu_i,$$
so that each $T_i f$ is a constant function on $X$. 
Note that $\abs{T_if_i(x)}\leq \norm{f_i}_{\cy_i}$ for all $f_i \in \cy_i$ and that $\abs{T_i F_{i}(x)}\geq (1-\epsilon)$ for all $x\in X$. 

\medskip
\noindent
Let $\theta_j:=\frac{1}{p_j}\in(0,1)$, and choose the remaining $\theta_i\in (0,1)$ in such a way that $\sum_{i=1}^d \theta_i=1$. 

\medskip
\noindent
With these choices, we have
\begin{equation*}
\begin{split}
&\int_X \prod_{i=1}^d \abs{T_if_i(x)}^{p_i \theta_i} \dmu (x)\leq \int_{Y_j} \abs{f_j} \abs{g_j} \dmu_j \prod_{i\neq j} \norm{f_i}^{p_i \theta_i}_{\cy_i}\\
&\leq \norm{g_j}_{\cy_j'} \norm{f_j}_{\cy_j} \prod_{i\neq j} \norm{f_i}^{p_i\theta_i}_{\cy_i} =  \prod_{i=1}^d \norm{f_i}_{\cy_i}^{p_i \theta_i}. 
\end{split}\end{equation*}
By assumption, there are $(\phi_i)$ such that $\prod_{i=1}^d \phi_i(x)^{\theta_i} \geq 1$ and such that for each $i$,
$$ \left(\int_X |T_i f_i|^{p_i} \phi_i \rm d \mu\right)^{1/p_i} \leq C_{\{\cy_j\}}\|f_i\|_{\mathcal{Y}_i}.
$$
Hence, by the equivalence set out in Section~\ref{strat}, we have the  vector-valued inequality
$$
\int_X \prod_{i=1}^d \left( \sum_{k=1}^N \abs{T_i f_{i,k}}^{p_i} \right)^{\theta_i} \dmu \leq C_{\{\cy_j\}} \prod_{i=1}^d \left( \sum_{k=1}^N \norm{f_{i,k}}_{\cy_i}^{p_i} \right)^{\theta_i}
$$
for the same constant $C_{\{\cy_j\}}$. 

\medskip
\noindent
For $i \neq j$, set $f_{i,k} = F_i$ for $k=1$ and $f_{i,k} =0$ for $k=2,\ldots,N$. We obtain,
$$
\int_{Y_j} \left( \sum_{k=1}^N \abs{f_{j,k}}^{p_j} \right)^{1/p_j} \abs{g_j} {\rm d} \nu_j\leq  C_{\{\cy_j\}} \frac{1}{(1-\epsilon)^{d-1}} \left( \sum_{k=1}^N \norm{f_{j,k}}_{\cy_j}^{p_j} \right)^{1/p_j}. $$
By assumption, the K\"othe dual $\cy_j'$ is norming, and hence taking supremum over $g_j$ in the unit ball of $\cy_{j}'$ and letting $\epsilon\to 0$ yields
$$
\norm{ \left( \sum_{k=1}^N \abs{f_{j,k}}^{p_j} \right)^{1/p_j} }_{\cy_j}\leq C_{\{\cy_j\}} \left( \sum_{k=1}^N \norm{f_{j,k}}_{\cy_j}^{p_j} \right)^{1/p_j}.
$$
This is the defining inequality of $p_j$-convexity. The proof is completed.
\end{proof}

\subsection{Constructive proof of Lemma~\ref{FTP_0}}\label{pf_lem}
Finally, we turn to our constructive proof of Lemma~\ref{FTP_0}, which represents a slight strengthening (in the particular case when the underlying group is $\mathbb{T}$) of a result found in 
Fig\`a-Talamanca and Price \cite{FT_P}, Theorem 4.4\footnote{The examples in \cite{FT_P} depend in principle also on the exponent $p$, whereas ours is $p$-independent.}; see also the references therein. 

\medskip
\noindent
We recall (see for example \cite{Katznelson}, p.33) the sequence of Rudin--Shapiro polynomials $P_m$ on $\mathbb{T}$. There is a (deterministic) sequence $a_n \in \{\pm 1\}$ such that the sequence of trigonometric polynomials defined for $m \geq 0$ by 
$$P_m(x) := \sum_{n=0}^{2^m -1} a_n e^{2 \pi i nx}$$
has the following properties (of which the first and the last are trivial and the second is the interesting one):

\begin{itemize}
\medskip
\item $\|P_m\|_2 = 2^{m/2}$

\medskip
\item $\|P_m\|_\infty \leq 2^{(m+1)/2}$

\medskip
\item $2^{(m-1)/2} \leq \|P_m\|_q \leq 2^{(m+1)/2}$ for $1 \leq q \leq \infty$

\medskip
\item $\|\widehat{P_m}\|_\infty =1$.

\end{itemize}
\medskip
\noindent
For the third item, the upper bounds are clear from the second item; for the lower bounds it suffices by H\"older's inequality to show that $\|P_m\|_1 \geq 2^{(m-1)/2}$, and this follows from the first two items together with $\|P_m\|_2 \leq \|P_m\|_1^{1/2} \|P_m\|_\infty^{1/2}$. 

\medskip
\noindent
From the first and fourth of these we deduce by Young's inequality and interpolation that, for $1 \leq r \leq 2$,
$$ \|P_m \ast f\|_2 \leq 2^{m(\frac{1}{r} - \frac{1}{2})} \|f\|_r.$$

\medskip
\noindent
Let $F_m(x) = \sum_{n=0}^{2^m -1} e^{2 \pi i n x}$ so that $\|F_m\|_p \lesssim 2^{m/p'}$ for $1 < p \leq \infty$ and $\|F_m\|_1 \lesssim m$. 

\medskip
\noindent
Observe that 
$P_m \ast F_m = P_m$, so that $\|P_m \ast F_m\|_q = \|P_m\|_q \gtrsim 2^{m/2}$ for all $1 \leq q \leq \infty$. Let $T_m$ denote convolution with $P_m$. Using these bounds we can easily see that 
$\|T_m\|_{L^p \to L^q} \lesssim \|T_m\|_{L^r \to L^2}$ only when $p \geq r$. Indeed, from the upper bounds on 
$\|F_m\|_p$ we deduce that for all $1 \leq p, q \leq \infty$,  $\|T_m\|_{L^p \to L^q}$ 
is bounded below by 
$2^{m(1/2 -1/p')}$ when $p>1$ and $m^{-1}2^{m/2}$ when $p=1$. 

\medskip
\noindent 
We now build an explicit example. We first note that $\tilde{P}_m := e^{2 \pi i 2^{m} x} P_m(x)$
has frequencies in $[2^m, 2^{m+1})$, and similarly with 
$\tilde{F}_m(x) := e^{2 \pi i 2^m x}F_m(x)$.
Performing this modulation does not change any of the estimates on $P_m$ and $F_m$ which we had above, and we have $\tilde{P}_m \ast \tilde{F}_{m} = \tilde{P}_m$ and $\tilde{P}_m \ast \tilde{F}_{m'} = 0$ for $m \neq m'$.

\medskip
\noindent
Fix an $r$ with $1 \leq r \leq 2$. Let $T$ (depending on $r$) be given by convolution with $$ \sum_{m=1}^\infty m^{-2} 2^{m/2} 2^{-m/r} \tilde{P}_m;$$
by the bounds for $P_m$ derived above we see that $T$ is bounded from $L^r$ to $L^2$. 

\medskip
\noindent
Fix $p \geq 1$ and let 
$f_m = m^{-3} 2^{-m/p'} \tilde{F}_{m}$
so that 
$$\|f_m\|_p \leq m^{-3} 2^{-m/p'} \|\tilde{F}_{m}\|_p \lesssim 1$$
uniformly in $m \geq 1$.

\medskip
\noindent
Moreover, we have
$$ Tf_m = m^{-5} 2^{m/2} 2^{-m/r}2^{-m/p'} \tilde{P}_m \ast \tilde{F_m}$$ 
since $\tilde{P}_m \ast \tilde{F}_{m'} = 0$ for $m \neq m'$. 
Therefore,
$$\|T f_m\|_1 = m^{-5} 2^{m/2} 2^{-m/r}2^{-m/p'} \left\|\tilde{P}_m \ast \tilde{F}_{m}\right\|_1\sim 
m^{-5} 2^{-m/r}2^{m/p}$$
for each $m \geq 1$. 

\medskip
\noindent
Consequently,
$$ \|T\|_{L^p \to L^1} \gtrsim \sup_m \|T f_m\|_1 = \infty$$
when $p < r$.

\medskip
\noindent
Thus, for each $ 1 < r \leq 2$, we have built an example of an $L^r \to L^2$-bounded translation-invariant operator $T$ on $\mathbb{T}$, such that for every $1 \leq p < r$, we have $\|T\|_{L^p \to L^1} = \infty$.

\medskip
\noindent
 By duality, for each $ 2 \leq  r < \infty$, we have an explicit example of an $L^2 \to L^r$-bounded translation-invariant operator $T$ on $\mathbb{T}$, such that if $q > r$, we have $\|T\|_{L^\infty \to L^q} = \infty$. This establishes the constructive version of Lemma~\ref{FTP_0}.

\bigskip


\end{document}